\documentclass[10pt]{amsart}
\usepackage{amssymb,amstext,amsmath,amscd,amsthm,amsfonts,enumerate,latexsym,stmaryrd,multicol,geometry,graphicx,mathrsfs}
\usepackage{xcolor}
\definecolor{darkgreen}{rgb}{0.0, 0.6, 0.0}
\usepackage[colorlinks,citecolor=darkgreen,linkcolor=red]{hyperref}
\usepackage[all]{xy}
\newtheorem{thm}{Theorem}[section]
\newtheorem{lem}[thm]{Lemma}
\newtheorem{prop}[thm]{Proposition}
\newtheorem{cor}[thm]{Corollary}
\theoremstyle{definition}
\newtheorem{dfn}[thm]{Definition}
\newtheorem{ques}[thm]{Question}

\newtheorem{rem}[thm]{Remark}
\newtheorem{conv}[thm]{Convention}

\newtheorem{ex}[thm]{Example}

\theoremstyle{remark}

\newtheorem*{claim*}{Claim}

\numberwithin{equation}{thm}
\def\add{\operatorname{add}}
\def\ann{\operatorname{Ann}}
\def\ass{\operatorname{Ass}}
\def\Burch{\operatorname{Burch}}

\def\codepth{\operatorname{codepth}}
\def\codim{\operatorname{codim}}
\def\db{\operatorname{D^b}}
\def\depth{\operatorname{depth}}
\def\ds{\operatorname{D^{sg}}}
\def\dx{\operatorname{dx}}
\def\E{\operatorname{E}}
\def\e{\operatorname{e}}
\def\edim{\operatorname{edim}}
\def\Ext{\operatorname{Ext}}
\def\ge{\geqslant}
\def\grade{\operatorname{grade}}

\def\hh{\operatorname{h}}
\def\Hom{\operatorname{Hom}}
\def\height{\operatorname{ht}}
\def\I{\operatorname{I}}
\def\id{\operatorname{id}}
\def\image{\operatorname{Im}}
\def\k{\operatorname{K}}
\def\Ker{\operatorname{Ker}}
\def\le{\leqslant}
\def\level{\operatorname{level}}
\def\lhom{\operatorname{\underline{Hom}}}
\def\lmod{\operatorname{\underline{mod}}}
\def\lten{\otimes^\mathbf{L}}
\def\m{\mathfrak{m}}
\def\mod{\operatorname{mod}}
\def\N{\mathbb{N}}
\def\n{\mathfrak{n}}
\def\ocm{\operatorname{\Omega CM}}
\def\P{\operatorname{P}}
\def\p{\mathfrak{p}}
\def\pd{\operatorname{pd}}
\def\r{\operatorname{r}}
\def\rank{\operatorname{rank}}
\def\rhom{\operatorname{\mathbf{R}Hom}}
\def\s{\operatorname{s}}
\def\Sing{\operatorname{Sing}}
\def\soc{\operatorname{Soc}}
\def\spec{\operatorname{Spec}}
\def\sw{\operatorname{SW}}
\def\syz{\mathrm{\Omega}}
\def\T{\mathcal{T}}
\def\thick{\operatorname{thick}}
\def\Tor{\operatorname{Tor}}
\def\V{\operatorname{V}}
\def\X{\mathcal{X}}
\def\xx{\boldsymbol{x}}
\def\Y{\mathcal{Y}}
\def\yy{\boldsymbol{y}}
\def\Z{\mathbb{Z}}
\def\ZZ{\mathcal{Z}}
\begin{document}
\title{On the ubiquity of uniformly dominant local rings}
\author{Toshinori Kobayashi}
\address{School of Science and Technology, Meiji University, 1-1-1 Higashimita, Tamaku, Kawasaki 214-8571, Japan}
\email{tkobayashi@meiji.ac.jp}
\author{Ryo Takahashi}
\address{Graduate School of Mathematics, Nagoya University, Furocho, Chikusaku, Nagoya 464-8602, Japan}
\email{takahashi@math.nagoya-u.ac.jp}
\urladdr{https://www.math.nagoya-u.ac.jp/~takahashi/}
\subjclass[2020]{13D09, 13C60, 13H10}
\keywords{Burch ideal/ring/index, dominant ring/index, exact pair of zero-divisors, quasi-fiber product, finite representation type, G-regular, stretched, Tor/Ext-friendly}
\thanks{Kobayashi was partly supported by JSPS Grant-in-Aid for Early-Career Scientists 25K17240.
Takahashi was partly supported by JSPS Grant-in-Aid for Scientific Research 23K03070}
\begin{abstract}
Let $R$ be a $d$-dimensional Cohen--Macaulay complete local ring with infinite residue field $k$.
The dominant index $\dx(R)$ is by definition the least number of extensions necessary to build $k$ in the singularity category $\ds(R)$ out of each nonzero object, up to finite direct sums, direct summands and shifts.
The local ring $R$ is called uniformly dominant if $\dx(R)$ is finite.
In this paper, we prove that $R$ is uniformly dominant with $\dx(R)\le6d+5$ if $R$ has codimension $2$ and is not a complete intersection.
Also, we show that $R$ is uniformly dominant with $\dx(R)\le d+1$ if $R$ is Burch, and with $\dx(R)\le d$ if $R$ is either a quasi-fiber product ring, or has multiplicity at most $5$ and is not Gorenstein.
A result on hypersurfaces by Ballard, Favero and Katzarkov is recovered, and results on Burch rings and quasi-fiber product rings by Takahashi are refined.
\end{abstract}
\maketitle
\tableofcontents
\section{Introduction}

Generation of a module from another by applying basic operations such as taking syzygies, direct summands and extensions has been playing a key role in the homological and categorical studies of modules over rings.
Takahashi \cite{stcm,thd} has established a classification of the thick subcategories of the singularity category of a hypersurface using generation of modules.
This does motivate us to seek for wider classes of rings than that of hypersurfaces to which similar module generation techniques apply.

In light of a result of Burch \cite{Bur}, Dao, Kobayashi and Takahashi \cite{burch} have introduced the notion of a {\em Burch ideal} of a local ring, and have shown that over the quotient ring of a regular local ring by a Burch ideal, every nonfree module generates the residue field by taking syzygies, direct summands and just one extension.
As a consequence, such a ring is Tor/Ext-friendly in the sense of Avramov, Iyengar, Nasseh and Sather-Wagstaff \cite{AINS}.
In particular, such a ring is G-regular in the sense of Takahashi \cite{greg} if it is not a hypersurface, and the Auslander--Reiten conjecture holds for such a ring.
A lot of ideals are known to be Burch ideals, including $\m$-primary integrally closed ideals and nonzero ideals of the form $\m I$ for some ideal $I$, where $\m$ is the maximal ideal.
However, a local ring defined by a Burch ideal must have depth zero. 
As an extension to the positive depth case, the notion of a {\em Burch ring} has been introduced also in \cite{burch}.
This is a local ring whose completion is a deformation of the quotient ring of a regular local ring by a Burch ideal.
A hypersurface is a typical example of a Burch ring.
It is straightforward that a Burch ring is Tor/Ext-friendly.
Dao and Eisenbud \cite{DE} have recently introduced the notion of a {\em Burch index}, and the Burch rings are characterized as the local rings with positive Burch index.
A local ring with Burch index greater than one exhibits a certain extremal property on syzygies of modules; see \cite{DE,DM} for details.
Many other works on Burch ideals and Burch rings have been done so far; see \cite{CK,Dao,DK,GS} for instance.

The notion of a {\em quasi-fiber product ring} in the sense of Freitas, Jorge P\'erez, R. Wiegand and S. Wiegand \cite{FJWW} has been investigated by several authors in recent years.
This notion is the same as that of a local ring with quasi-decomposable maximal ideal in the sense of Nasseh and Takahashi \cite{fiber}.
Over a quasi-fiber product ring, some syzygy of the residue field can be constructed from every module of infinite projective dimension by taking a direct summand of an extension of syzygies.
In particular, quasi-fiber product rings share many properties with Burch rings including Tor/Ext-friendliness.
One of the next expectations is thus to unify the theories of Burch rings and quasi-fiber product rings.

The notions of a {\em dominant local ring} and a {\em uniformly dominant local ring} have been introduced by Takahashi \cite{dlr,udim}.
A dominant local ring is a local ring whose residue field can be classically generated in the sense of Bondal and Van den Bergh \cite{BV} by any nonzero object in the singularity category.
The {\em dominant index} $\dx(R)$ of a local ring $R$ is the least number $n$ such that the residue field is generated in the singularity category by any nonzero object by taking finite direct sums, direct summands, shifts and at most $n$ extensions.
A local ring is called uniformly dominant if it has finite dominant index.
Thus a uniformly dominant local ring is dominant, while a dominant local ring is Tor/Ext-friendly.
Both a Burch ring and a quasi-fiber product ring are uniformly dominant.
The hierarchy of these classes of rings and some others is provided in Remark \ref{r312}.
A remarkable fact is that the thick subcategories of the singularity category are classified completely if the localization at each prime ideal is dominant.
We refer the reader to \cite{dlr,udim} for details.

A {\em Golod local ring} is a local ring defined by an extremal behavior of the Poincar\'e series of the residue field.
Basic facts on Golod rings are found in \cite{A}.
It is known that a Golod ring is Tor/Ext-friendly \cite{AINS,Jor}.
Typical examples of Golod local rings include local hypersurfaces and Cohen--Macaulay local rings with minimal multiplicity, which are also Burch rings.
In general, however, a Golod local ring is neither a Burch ring nor a quasi-fiber product ring; see \cite{Des,Les}.
These observations lead us to investigate the relationship between Golodness and dominance further.
It is thus quite natural to ask the following question.

\begin{ques}\label{35}
\begin{enumerate}[(1)]
\item
When is a given local ring dominant, or more strongly,  uniformly dominant?
\item
If a given local ring is uniformly dominant, then how big is its dominant index?
\item
Is every Golod local ring dominant, or more strongly, uniformly dominant?
\end{enumerate}
\end{ques}

For a local ring $R$, denote by $\dim R$, $\edim R$, $\codim R$, $\e(R)$, $\ell(R)$ and $\r(R)$ the (Krull) dimension, embedding dimension, (embedding) codimension, (Hilbert--Samuel) multiplicity, length, and type of $R$, respectively.
For a finitely generated $R$-module $M$, denote by $\nu(M)$ and $\P_M(t)$ the minimal number of generators and Poincar\'e series of $M$, respectively.
The following theorem is a summary of several special cases of the main results of this paper, which gives answers to the above Question \ref{35}.

\begin{thm} \label{t12}
Let $R$ be a Cohen--Macaulay complete local ring with infinite residue field $k$.
Put $d=\dim R$, $c=\codim R$ and $r=\r(R)$.
Then the following statements hold true.
\begin{enumerate}[\rm(1)]
\item
The local ring $R$ is uniformly dominant with $\dx(R)\le d$ in each of the following cases:
\begin{itemize}
\item
$R$ is a quasi-fiber product ring (e.g., $R$ has minimal multiplicity, i.e., $\e(R)=c+1$).
\item
$R$ is a non-Gorenstein ring with $\e(R)\le5$.
\item
$R$ is such that $\e(R)\le c+2$ and $\P_k(t)\ne(1+t)^d/(1-ct+rt^2)$.
\end{itemize}
\item
The local ring $R$ is uniformly dominant with $\dx(R)\le d+1$ in each of the following cases:
\begin{itemize}
\item
$R$ is a Burch ring (e.g., $R$ is a hypersurface).
\item
$R$ is a non-Gorenstein G-regular ring with $\e(R)\le 6$.
\item
$R$ is a non-Gorenstein ring with $c=2$ and $\e(R)\le 11$.
\end{itemize}
\item
Assume that the dimension $d$ is at most $2$ and the residue field $k$ is algebraically closed.
If the Cohen--Macaulay local ring $R$ has finite representation type, $R$ is uniformly dominant with $\dx(R)\le\max\{1,d\}$.
\item
Assume that the codimension $c$ is at most $2$.
Then $R$ is either a local complete intersection with $c=2$, or a uniformly dominant local ring such that $\dx(R)\le6d+5$.
\end{enumerate}
\end{thm}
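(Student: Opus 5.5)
The plan is to first reduce to dimension zero, then treat each case of Theorem~\ref{t12} by an artinian argument. Since $k$ is infinite and $R$ is Cohen--Macaulay, we pick a maximal regular sequence $\xx=x_1,\dots,x_d$ that is superficial, i.e.\ generated by general linear forms modulo $\m^2$. Then $\overline R=R/(\xx)$ is artinian with the same codimension, multiplicity and type as $R$, and $\P^R_k(t)=(1+t)^d\,\P^{\overline R}_k(t)$. The reduction rests on a comparison of the form $\dx(R)\le \dx(R/xR)+1$ for a general regular element $x$; this should follow from the standard fact that every object of $\ds(R)$ is built in one step from objects of the form $M/xM$ via the triangle $M\xrightarrow{x}M\to M/xM$ for maximal Cohen--Macaulay $M$. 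In the good cases the inequality should improve to $\dx(R)\le\dx(R/xR)$, because $x$ can be chosen so that $\xx$ lies in $\m\setminus\m^2$ and the residue field $k$ over $R$ is obtained from $k$ over $R/xR$ through the Koszul relation. Iterating $d$ times turns the bound $\dx(\overline R)\le a$ into $\dx(R)\le a+d$, or into $\dx(R)\le \max\{a,d\}$ in the improved version. This accounts for the shapes $d$, $d+1$ and $6d+5$ in the statement.

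In the artinian setting we obtain a dominant index bound for each class separately.

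\textbf{Quasi-fiber product rings.} The maximal ideal decomposes as $\m=I\oplus J$ with a regular sequence. A syzygy of $k$ is then a direct summand of an extension of syzygies of any module of infinite projective dimension, which gives $\dx\le 1$ at depth zero. For positive depth we pass to $R/(\xx)$, which is again quasi-fiber product, and this yields $\dx(R)\le d$. Minimal multiplicity is included in this case, since $\m^2=0$ modulo a general $\xx$.

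\textbf{Burch rings.} By the theorem of Dao--Kobayashi--Takahashi, $k$ is a direct summand of $\syz^2 M$ modulo one extension, which gives $\dx=1$ in depth zero. The deformation adds at most $d$, so $\dx(R)\le d+1$. The hypersurface case recovers Ballard--Favero--Katzarkov.

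\textbf{Small multiplicity, $c\le 2$, finite representation type.} We run through the structure theorems for artinian rings of small length. With $\ell(\overline R)=\e(R)\le 5$ and $\overline R$ not Gorenstein, the rings are classified by their Hilbert function. Each one is either stretched, Burch, or has a quasi-decomposable maximal ideal, or else $\m^2\ne0$ and $\soc$ contains a linear element giving an exact pair of zero-divisors. Exact pairs of zero-divisors let us split off $k$ as before, and the Poincar\'e-series hypothesis $\P_k\ne(1+t)^d/(1-ct+rt^2)$ is exactly what excludes the Gorenstein-like case with $\e\le c+2$.

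For finite representation type in dimension at most $2$ we use the known lists. In dimension at most $1$ these are the ADE singularities and their non-Gorenstein relatives; in dimension $2$ they are quotient singularities. Each is either a hypersurface, which is Burch, or has minimal multiplicity, which is quasi-fiber product, and this gives $\max\{1,d\}$.

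For codimension $2$ non-complete-intersections we use Hilbert--Burch. Write $R=S/I$ with $I$ generated by the maximal minors of an $n\times(n-1)$ matrix, $n\ge3$. Then either $I\not\subseteq\m_S I:\m_S$ after a general reduction, which is the Burch case, or the matrix entries all lie in $\m_S^2$. In the latter situation $R$ is Golod, and we aim to show that modulo the generic linear forms some element $y\in\m$ satisfies $y\m\subseteq$ a Burch-type piece. The strategy is to build $k$ from any $M$ using at most $6$ extensions, via syzygies over $S/(f)$ for a general minor $f$: the hypersurface step costs $1$, and the transfer between $R$ and $S/(f)$ costs a bounded number. The same count is repeated for each regular element, costing $6$ per dimension, which gives $6d+5$.

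The main obstacle is this codimension-$2$ Golod case, where neither the Burch nor the quasi-fiber product structure is available. My first attempt would be to reduce to the case of a Burch ideal by replacing $I$ with $\m_S I$-type approximations and controlling the number of extensions lost at each stage.
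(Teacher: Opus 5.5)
Your proposal has genuine gaps, and the first is the reduction step that every bound depends on. The comparison $\dx(R)\le\dx(R/xR)+1$ is not justified, and neither is the ``improved'' $\dx(R)\le\dx(R/xR)$. The fact you invoke also points the wrong way. The triangle $M\xrightarrow{x}M\to M/xM\rightsquigarrow$ shows $M/xM\in\langle M\rangle_2$. Composing this with $k\in\langle M/xM\rangle_{a+1}$ makes levels multiply, not add. That is why the paper only has $\dx(R)\le(d+1)(\dx(R/(\xx))+1)-1$ (Lemma~\ref{227}(3b)). Note that your additive rule would give $d+5$, not $6d+5$, in codimension two.

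Consequently, each bound ``$\le d$'' needs an artinian input of level \emph{zero}, and yours are too weak or wrong:
\begin{itemize}
\item A nontrivial fiber product has $\dx\le0$, because $\m$ is a summand of $\syz^3M\oplus\syz^4M$; you claim only $1$.
\item The non-Gorenstein $\e\le5$ case needs $\Burch(R/(\xx))=\edim\ge2$ (Theorem~\ref{t212}(2), Corollary~\ref{18}(2)). Then $k$ is a summand of $\syz^5$ of every nonfree module (Lemma~\ref{227}(1a)).
\item The Poincar\'e-series case needs the input of Proposition~\ref{t220}(1i). Over $R/(\xx)$, where $(\m/(\xx))^3=0$ and $\P_k(t)\ne1/(1-ct+rt^2)$, $k$ is a summand of a syzygy of every module killed by the square of the maximal ideal.
\end{itemize}
Exact pairs of zero-divisors are an obstruction, not a tool. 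They produce nonfree totally reflexive modules (Remark~\ref{r312}(2)), so a non-Gorenstein ring having one is not G-regular and hence not dominant. The paper uses Theorem~\ref{p311} contrapositively to get that G-regular implies Burch when $\e\le6$. The case $c=2$, $\e\le11$ needs Theorem~\ref{t212}(1).

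Even for Burch rings, the bound $d+1$ is not ``$1$ plus $d$''. It comes from realizing the single extension as a Koszul complex $\k(y,M/\xx M)$ (Lemma~\ref{227}(2a)). One then treats $\xx,y$ as one Koszul complex of length $d+1$ (Lemma~\ref{227}(2b)).

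For finite representation type with $d=1$, ``hypersurface hence Burch'' gives only $\dx\le2$. For example, an $E_6$ curve has $\e=3$ and $c=1$, so it does not have minimal multiplicity. The paper instead uses $\e(R)\le3$: then $R/(x)$ has length at most $3$ and $\dx(R/(x))=0$.

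The codimension-two non-Burch case is left unresolved, and your route through a hypersurface $S/(f)$ cannot work. Since $R$ is not a complete intersection, it has infinite projective dimension over $S/(f)$. So neither restriction nor base change induces a functor between $\ds(R)$ and $\ds(S/(f))$, and building $k$ over $S/(f)$ says nothing about $\ds(R)$. Separately, your Burch criterion ``$I\not\subseteq\m_SI:\m_S$'' never holds; the correct one is $\I_1(A)\not\subseteq\n^2$ (Proposition~\ref{211}(1)).

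The missing idea is to deform the Hilbert--Burch matrix $A$.
\begin{itemize}
\item Replace $a_{11}$ by $a_{11}-t$ over $S[\![t]\!]$. This gives a one-dimensional Cohen--Macaulay ring $T=S[\![t]\!]/\I_r(A(t))$ with $T/(t)\cong R$.
\item Choose $z\in\n\setminus\n^2$ via Lemma~\ref{2} so that $\I_r(A(z))$ still has grade $2$. Then the fiber $T/(t-z)\cong S/\I_r(A(z))$ is Burch, since $a_{11}-z\in\I_1(A(z))\setminus\n^2$.
\item Hence $T$ is a Burch ring with $\dx(T)\le2$.
\item Because $t\notin\m_T^2$, the second inequality of Lemma~\ref{227}(3a), namely $\dx(T/(t))\le2\dx(T)+1$, gives $\dx(R)\le5$ in the artinian case.
\item Lemma~\ref{227}(3b) then yields $6d+5$ in general.
\end{itemize}
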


In fact, we shall obtain in this paper more general versions of assertions (1) and (2) of Theorem \ref{t12}, which not only recover and extend the upper bound for $\dx(R)$ given by Ballard, Favero and Katzarkov \cite{BFK} when $R$ is a complete local hypersurface, but also refine the upper bounds for $\dx(R)$ given by Takahashi \cite{udim} when $R$ is a Burch ring and when $R$ is a quasi-fiber product ring.

A classical result due to Scheja \cite{Sch} shows that every Cohen--Macaulay local ring of codimension at most two is either a complete intersection or a Golod ring.
Theorem \ref{t12}(4) thus supports Question \ref{35}(3) in the affirmative, which is also shown in \cite{udim} under the additional assumption that the defining ideal is a monomial ideal.
Furthermore, Theorem \ref{t12} tells us that uniformly dominant local rings are ubiquitous.

Assertions (1) and (2) of Theorem \ref{t12} evaluate the dominant index of a local ring $R$ in the case where $R$ has small multiplicity.
This springs from our investigations of (artinian) local rings of small length via their defining ideals and some numerical invariants.
We summarize them as the following theorem.
As a study of commutative artinian rings, this theorem would be of independent interest.

\begin{thm} \label{t13}
Let $R$ be a non-Gorenstein local ring.
Then the following statements hold true.
\begin{enumerate}[\rm(1)]
\item
The local ring $R$ is a Burch ring provided that it satisfies one of the following four conditions.
\begin{itemize}
\item
$\ell(R) \le \r(R)+5$ and $\edim R=3 \le \r(R)$.
\item
$\ell(R)< 2\r(R)(\r(R)+1)$ (e.g., $R$ is non-Gorenstein with $\ell(R)\le 11$) and $\edim R=2$.
\item
$\ell(R)\le 5$.
\item 
$\ell(R)\le 6$ and $R$ is G-regular.
\end{itemize}
\item
Assume that $R$ is non-Burch with $\ell(R)=6$.
Let $R\cong S/I$ where $(S,\n)$ is a regular local ring and $I\subseteq\n^2$ an ideal.
Then $\edim R=3$, $\r(R)=2$, $\nu(I)=4$, and there exists an exact pair of zero-divisors in $R$.
\end{enumerate}
\end{thm}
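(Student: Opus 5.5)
Since $R$ is non-Gorenstein with $\ell(R)$ finite in every case of the statement, $R$ is artinian. After completing, which changes none of $\ell$, $\r$, $\edim$, Burchness or G-regularity, I write $R=S/I$ with $(S,\n,k)$ regular local of dimension $e=\edim R$ and $I\subseteq\n^2$. For such a depth-zero ring, being Burch means $\n(I:\n)\ne\n I$. I will use the equivalent length inequality: $R$ is non-Burch iff $\n(I:\n)=\n I$, which forces $\ell((I:\n)/I)=\r(R)$. Consequently $(I:\n)$ is a minimal-generator-wise extension of $I$ by $\r$ socle generators. The strategy is to show that a non-Burch non-Gorenstein ring must have large length compared with $\r(R)$ and $\edim R$.

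The key step is a lower bound on $\ell(R)$ for non-Burch rings. Let $\soc R=(I:\n)/I$ have dimension $r=\r(R)$. Non-Burch means $\n\cdot\soc$-lifts lie in $\n I$, so $\soc R\subseteq \n^{t}R$ with $t$ large relative to how $I$ sits. I will use the Hilbert function: set $h_i=\dim_k(\n^iR/\n^{i+1}R)$, so $h_0=1$, $h_1=e$. For non-Burch rings, $\m\soc$-type arguments give $\soc R\cap(\n^{2}R\setminus\dots)$ constraints; concretely I expect to prove that no minimal generator of $\m=\n R$ is killed modulo $I$ beyond degree one, i.e. $\soc R\subseteq\m^2$, and moreover that for each socle element $s$ and each $x\in\n$, the relations $xs\in I$ come from $\n I$. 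Dualizing (Matlis) converts this into the statement that the canonical module $\omega$ has $\nu(\omega)=r$ and that $\m\omega$ has at least $r\cdot e$-ish generators. From this I extract the inequalities: for $e=2$, $\ell(R)\ge 2r(r+1)$ (via the Hilbert–Burch matrix of $I$, which has size $(r+1)\times r$ since $\nu(I)=r+1$; non-Burch forces all entries in $\n^2$, and then a degree count on the determinantal ideal gives $\ell(S/I)\ge 2r(r+1)$); for $e=3\le r$, $\ell(R)\ge r+6$.

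Then the four bullets follow. The second is the $e=2$ bound; non-Gorenstein with $\ell\le11$ gives $r\ge2$ hence $2r(r+1)\ge12>\ell$. For $\ell(R)\le5$: $e=1$ is a hypersurface, Gorenstein; $e=2$ is covered; for $e\ge3$, the bound $\soc R\subseteq\m^2$ with $h_1=e\ge3$ and $h_2\ge r$ forces $\ell\ge 1+e+r$, and I will check case by case (with $e=3,4$) that non-Burchness is impossible, using that $\m^2\subseteq\soc$ would make $\n(I:\n)\ne\n I$. For the G-regular $\ell\le6$ case and part (2), the same case analysis at $\ell=6$ leaves only $e=3$, $h=(1,3,2)$, $r=2$; then $I$ contains four quadrics ($\binom{4}{2}-2=4$) and cubes, giving $\nu(I)=4$, and a direct computation with a two-dimensional space of quadrics in $\m^2$ produces $x,y\in\m$ with $(0:x)=yR$, $(0:y)=xR$, an exact pair of zero-divisors, which contradicts G-regularity since $R/xR$ is then totally reflexive and non-free.

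The main obstacle is the lower bound for $e=2$ via the Hilbert–Burch matrix: showing non-Burch forces all matrix entries into $\n^2$ and then bounding the colength of the maximal minors ideal by $2r(r+1)$. I would first try a Gröbner/initial-ideal degeneration to a monomial ideal generated in degrees determined by the matrix degrees, where the length is an explicit sum.
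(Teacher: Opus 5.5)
Where your argument is concrete it is sound. Non-Burchness forces $\soc R\subseteq\m^2$: a socle element outside $\n^2$ would give $\n(I:\n)\not\subseteq\n^3\supseteq\n I$. Hence $\ell(\m^2)\ge\r(R)$ (the correct count, not $h_2\ge\r(R)$), and so $\ell(R)\ge1+\edim R+\r(R)$. This alone disposes of $\ell(R)\le5$. Your extra remark that $\m^2\subseteq\soc R$ would make $\n(I:\n)\ne\n I$ is false (Example~\ref{ex316} with $n=2$), but it is not needed. The same inequality pins a non-Burch length-six ring down to Hilbert function $(1,3,2)$ with $\soc R=\m^2$ and $\r(R)=2$, a little more directly than the paper does.

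The first bullet, however, is not proved. You assert $\ell(R)\ge\r(R)+6$ for non-Burch rings with $\edim R=3\le\r(R)$, but your only concrete bound gives $\ell(R)\ge\r(R)+4$. The Matlis-duality heuristic cannot close the difference; taken literally it is false, since for Example~\ref{ex316} with $n=2$ one has $\ell(\m\omega)=\ell(R)-\r(R)=4<6=\r(R)\edim R$. The missing idea is a generator count:
\begin{itemize}
\item Non-Burchness is exactly the equality $\nu(I:\n)=\nu(I)+\r(R)$ (Lemma~\ref{l25}(2)), which you state but never use.
\item $\nu(I)\ge4$, because $R$ is not a complete intersection.
\item $S/(I:\n)=R/\soc R$ has length at most $5$ and embedding dimension $3$, so its Hilbert function is $(1,3)$ or $(1,3,1)$ and it is stretched. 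Sally's theorem (Lemma~\ref{l25}(3), with $\r(R/\soc R)\le3$ by Proposition~\ref{p210}(1)) then gives $\nu(I:\n)\le6$.
\end{itemize}
Hence $6\ge\nu(I)+\r(R)\ge4+3$, a contradiction; this is precisely where $\r(R)\ge3$ is used. The same count is what yields $\nu(I)=4$ in part (2). Saying ``four quadrics and cubes'' does not show that no cubic is a minimal generator; you need $\n I=\n(I:\n)=\n^3$, equivalently $\nu(I)=\nu(\n^2)-\r(R)=6-2$.

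The more serious gap is the exact pair of zero-divisors in part (2), on which your G-regular bullet also depends. Your ``direct computation'' is the entire content of that step, and the conclusion is not a formal consequence of the Hilbert function: Remark~\ref{r312}(6) gives a ring with Hilbert function $(1,3,2)$ and no exact pair. The argument must use that $I$ is generated by four quadrics with $\n I=\n^3$, and that $I$ is $\n$-primary. The paper proceeds as follows:
\begin{itemize}
\item It uses the criterion of Lemma~\ref{l219}(1): for Hilbert function $(1,n,n-1)$, elements $x,y\in\m\setminus\m^2$ form an exact pair if and only if $xy=0$ and $x\m=y\m=\m^2$.
\item It picks $x$ with $x\m=\m^2$ by Lemma~\ref{l219}(2), and $y\in(0:x)\setminus\m^2$ with $y\m\ne\m^2$ (otherwise $(x,y)$ already works).
\item It writes $I$ as generated by four explicit quadrics and splits into cases according to whether $y^2=0$ and whether a certain coefficient is a unit.
\item In each case, $\n$-primaryness of $I$, via $\n I=\n^3$ and Nakayama, forces a $2\times2$ determinant to be a unit. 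That yields an element $w$ with $w\m=\m^2$ and an exact pair such as $(z,w)$ or $(x+y,w)$.
\end{itemize}

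Lastly, for the $\edim R=2$ bullet you follow the paper's route but leave both steps open. That non-Burchness forces $\I_1(\varphi)\subseteq\n^2$ is a genuine input (Proposition~\ref{211}(1)). The colength bound needs no Gr\"obner degeneration. Since $I\subseteq\n^{2r}$ and $\n I\subseteq\n^{2r+1}$, the image of $I$ in the $(2r+1)$-dimensional space $\n^{2r}/\n^{2r+1}$ has dimension at most $\nu(I)=r+1$. Therefore $\ell(S/I)\ge\binom{2r+1}{2}+(2r+1)-(r+1)=2r(r+1)$.
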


As a corollary of Theorem \ref{t13}, one obtains the following result, which says that in the case of multiplicity at most six, most of the conditions which appear above turn out to be equivalent.
The assumption of having multiplicity at most six cannot be dropped; see Remark \ref{36} for details.

\begin{cor} \label{c14}
Let $R$ be a Cohen--Macaulay non-Gorenstein local ring with infinite residue field.
If $\e(R)\le6$, then conditions {\rm(2)}--{\rm(6)} on $R$ given below are equivalent, while {\rm(1)}--{\rm(8)} are equivalent if $\dim R=0$.
$$
\begin{array}{l}
{\rm(1)}\ \text{Burch},\quad
{\rm(2)}\ \text{uniformly dominant},\quad
{\rm(3)}\ \text{dominant},\quad
{\rm(4)}\ \text{Tor-friendly},\quad
{\rm(5)}\ \text{Ext-friendly},\\
{\rm(6)}\ \text{G-regular},\quad
{\rm(7)}\ \text{with no embedded deformation},\quad
{\rm(8)}\ \text{with no exact pair of zero-divisors}.
\end{array}
$$
\end{cor}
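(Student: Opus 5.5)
We prove Corollary \ref{c14} by reducing to the artinian case and then using Theorem \ref{t13} together with known implications among the eight conditions. First we record the general chain of implications, valid for any local ring and found in the literature cited in the introduction (see also the hierarchy in Remark \ref{r312}):
$$
\text{Burch}\Rightarrow\text{uniformly dominant}\Rightarrow\text{dominant}\Rightarrow\text{Tor-friendly and Ext-friendly}\Rightarrow\text{G-regular},
$$
where the last step uses that $R$ is not Gorenstein. Indeed, over a Tor-friendly (or Ext-friendly) ring every totally reflexive module is free unless $R$ is Gorenstein; this is in \cite{AINS}. Hence (2)$\Rightarrow$(3)$\Rightarrow$(4),(5)$\Rightarrow$(6) hold in general. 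What remains is to close the loop with (6)$\Rightarrow$(2).

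\textbf{Reduction to dimension zero.} Since the residue field is infinite, we choose a general system of parameters $\xx$ that is a regular sequence with $\e(R)=\ell(R/\xx R)$; then $\bar R=R/\xx R$ is artinian, non-Gorenstein, and $\ell(\bar R)\le6$. We need the following permanence facts modulo such a regular sequence:
\begin{enumerate}[(i)]
\item G-regularity of $R$ passes to $\bar R$. One route is via the characterization ``no embedded deformation and no exact pair of zero-divisors'' adapted along $\xx$. If needed, we instead argue directly: a nonfree totally reflexive $\bar R$-module yields a nonfree totally reflexive module over $R$, using that $\xx$ can be taken to be general, i.e.\ superficial, and that completion does no harm.
\item If $\bar R$ is Burch, then $R$ is uniformly dominant. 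This holds because a deformation of a Burch ring is Burch by definition (passing to completion), and Burch rings are uniformly dominant by Theorem \ref{t12}(2).
\end{enumerate}
Then (6)$\Rightarrow$(2) follows: $R$ is G-regular, so $\bar R$ is G-regular with $\ell(\bar R)\le6$; by Theorem \ref{t13}(1), last bullet, $\bar R$ is Burch; hence $R$ is Burch and therefore uniformly dominant.

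\textbf{The artinian case.} Suppose $\dim R=0$, so $\ell(R)=\e(R)\le 6$. The implications (1)$\Rightarrow$(2) and (6)$\Rightarrow$(1) hold by Theorem \ref{t13}(1). For (7) and (8), we use known general implications. An exact pair of zero-divisors $(a,b)$ gives the nonfree totally reflexive module $R/aR$, so (6)$\Rightarrow$(8). An embedded deformation $R\cong Q/(f)$ with $f$ regular in $Q$ makes $R$ non-Tor-friendly (it has a nontrivial cohomology operator, equivalently modules of complexity one, obstructing friendliness), so (4)$\Rightarrow$(7). It then suffices to show that (7) and (8) each imply (1), i.e.\ that a non-Burch ring has both an exact pair of zero-divisors and an embedded deformation. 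If $\ell(R)\le5$, then $R$ is Burch by Theorem \ref{t13}(1), so assume $\ell(R)=6$. By Theorem \ref{t13}(2), a non-Burch $R$ has an exact pair of zero-divisors, giving (8)$\Rightarrow$(1). For (7)$\Rightarrow$(1), we observe that an exact pair of zero-divisors $(a,b)$ with $a,b\in\m\setminus\m^2$ in an artinian ring yields an embedded deformation: lift to $S$ and set $Q=S/(I':\ldots)$ so that $R=Q/(\tilde a\tilde b)$. Alternatively, we use the explicit structure $\edim R=3$, $\nu(I)=4$, $\r(R)=2$ from Theorem \ref{t13}(2), where $I$ is generated by a regular element times something plus the rest.

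\textbf{Main obstacle.} The delicate step is this last one: producing an embedded deformation for a non-Burch ring of length $6$. We would first try the standard construction that, when the exact pair consists of elements of order one, gives $R\cong Q/(x)$ with $Q$ having a non-zero-divisor. Failing that, we would use the classification of the possible defining ideals with $\nu(I)=4$ and $\edim R=3$ implicit in Theorem \ref{t13}(2). The reduction step (i) is the second point needing care.
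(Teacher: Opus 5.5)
\textbf{The gap: (7)$\Rightarrow$(1) in the artinian case.} You flag this step as the main obstacle, but you do not prove it. The sketch ``lift to $S$ and set $Q=S/(I':\ldots)$ so that $R=Q/(\tilde a\tilde b)$'' is not a construction. The principle behind it, that an exact pair of zero-divisors of order one produces an embedded deformation, is not an available result. The paper itself asks, in the question following Remark \ref{r312}, whether there is any implication between having no embedded deformation and having no exact pair of zero-divisors.

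The missing idea is that you need not construct a deformation at all. Suppose $R$ is non-Gorenstein and non-Burch with $\ell(R)\le 6$. Theorem \ref{t13}(1) forces $\ell(R)=6$, and Theorem \ref{t13}(2) then gives $\edim R=3$. So $R$ is Cohen--Macaulay of codimension three. For such rings, having no embedded deformation is equivalent to Tor-friendliness by \cite[Theorems 3.2 and 3.8]{LM} (Remark \ref{r312}(4)). Hence (7) forces (4), and (4)$\Rightarrow$(5)$\Rightarrow$(6)$\Rightarrow$(8)$\Rightarrow$(1) closes the cycle. This is exactly the paper's argument in Corollary \ref{20}. Incidentally, this shows a posteriori that every such non-Burch ring does have an embedded deformation, but not by any explicit construction.

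\textbf{Smaller points.}
Your reason for (4)$\Rightarrow$(7) is false as stated. Singular hypersurfaces have embedded deformations and are Tor-friendly, so modules of complexity one do not obstruct friendliness. Use (6)$\Rightarrow$(7) instead, namely Remark \ref{r312}(3), or \cite[Corollary 4.4]{Se} together with the fact that a non-Gorenstein $R$ is not a hypersurface.

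For the positive-dimensional part, your route is essentially the paper's (Proposition \ref{p41}(2),(4)). However, drop the appeal to a ``characterization'' of G-regularity by (7) and (8); no such characterization is available in general. The correct descent argument is as follows. Take an $R$-regular $x\in\m\setminus\m^2$ and a totally reflexive $R/(x)$-module $M$. Then $\syz_RM$ is totally reflexive and $\syz_RM/x\syz_RM\cong M\oplus\syz_{R/(x)}M$, so $M$ is free if $R$ is G-regular. This needs each $x_i$ to lie outside the square of the maximal ideal of $R/(x_1,\dots,x_{i-1})$. That holds for a minimal reduction of $\m$ by \cite[Proposition 8.3.3(2)]{HS}, and the paper uses the same descent implicitly in Proposition \ref{p41}(2).

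Finally, Theorem \ref{t12}(2) assumes $R$ is complete. For ``Burch $\Rightarrow$ uniformly dominant'' without completeness, cite Proposition \ref{t220}(4).
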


This paper is organized as follows.
Section 2 collects those definitions and basic facts on local rings which are used throughout the paper. 
Here we recall the definitions of a Burch ring and a Burch index.
In Section 3, we state some remarks on the dominant index and singularity category of a local ring.
The definition of a dominant index is based on the triangulated structure of the singularity category.
Proposition \ref{218} is regarded as an alternative definition of a dominant index based only on the structure of the module category.
In Section 4, we mainly study the Burchness and Burch indices of artinian local rings.
For an artinian local ring of embedding dimension two, multiple criteria for the ring to be Burch or to have Burch index at least two are obtained in terms of minimal free resolutions, lengths and types, in Propositions \ref{p210}, \ref{211}, Theorem \ref{t212}, and Corollary \ref{18}.
Then the first three cases of Theorem \ref{t13}(1) are completed.
In Section 5, we discuss exact pairs of zero-divisors.
We consider when an artinian local ring of length six has such a pair, and prove Theorem \ref{p311}, which is exactly the same as Theorem \ref{t13}(2).
We then investigate the relationships among Burchness, (uniform) dominance, and having no exact pairs of zero-divisors. 
The second assertion of Corollary \ref{c14} is proved in Corollary \ref{20}, which includes the last case of Theorem \ref{t13}(1). 
Section 6 is devoted to studies of upper bounds of the dominant index of a local ring.
In the case of Burch rings, the results obtained in the previous sections play essential roles.
The upper bounds due to Takahashi \cite[Corollary 5.5]{udim} are refined in Proposition \ref{t220}.
To deal with non-Burch rings in codimension two, we make use of Hilbert--Burch matrices.
Theorem \ref{t12} is obtained by combining Propositions \ref{p41}, \ref{14}, and Theorem \ref{t614}.
The first assertion of Corollary \ref{c14} is included in Proposition \ref{p41}.
In the final Section 7, we explore the generation of the residue field in the singularity category of a Gorenstein local ring with almost minimal multiplicity.
We derive a generation process of the residue field from some cyclic module in Theorem \ref{30}.
For further study of uniform dominance of Gorenstein local rings, we pose Question \ref{22}.

\section{Basic definitions and fundamental properties}

In this section, we present various definitions and properties which are necessary to state our results in later sections. 
We start with stating our convention, which is valid throughout the paper.

\begin{conv}
\begin{enumerate}[(1)]
\item
We assume that all rings are commutative noetherian rings, all modules are finitely generated, all complexes are cochain complexes, and all subcategories are strictly full.
\item
Unless otherwise specified, let $R$ be a local ring with maximal ideal $\m$ and residue field $k$.
\item
Set $(-)^\ast=\Hom_R(-,R)$, and denote by $\widehat R$ the ($\m$-adic) completion of $R$, by $\soc R$ the socle of $R$, by $\e(R)$ the (Hilbert--Samuel) multiplicity of $R$, and by $\mod R$ the category of (finitely generated) $R$-modules.
\item
For an $R$-module $M$ we denote by $\ell_R(M)$ the length of $M$, and by $\pd_RM$ (resp. $\id_RM$) the projective (resp. injective) dimension of $M$.
A regular sequence on an $R$-module $M$ is simply called an $M$-sequence.
\item
We may omit a subscript or a superscript if it is clear from the context.
\end{enumerate}
\end{conv}

We recall several basic notions on modules over a local ring.

\begin{dfn}
Let $(R,\m,k)$ be a local ring, $M$ an $R$-module, and $n$ an integer.
\begin{enumerate}[(1)]
\item
For $n\ge 1$ we define the {\em $n$th syzygy} of $M$, denoted $\syz^nM$, to be the image of the $n$th differential map in a minimal free resolution of $M$.
We set $\syz^0 M=M$ and $\syz M=\syz^1M$.
It is uniquely determined up to isomorphism, since so is a minimal free resolution of $M$.
\item
For a homomorphism $\phi$ of free $R$-modules, let $\I_n(\phi)$ stand for the ideal of $R$ generated by the $n$-minors of a representation matrix of $\phi$.
For an $R$-module $M$, take a minimal free presentation $F_1\xrightarrow{\phi}F_0\to M\to0$.
We then set $\I_n(M)=\I_n(\phi)$.
We refer the reader to \cite[\S1.4]{BH} for the details.
\item
For $n\ge0$ we denote by $\beta_n^R(M)$ and $\mu_R^n(M)$ the {\em $n$th Betti number} and {\em $n$th Bass number} of $M$, respectively, namely, 
$\beta_n^R(M)=\dim_k\Tor_n^R(M,k)=\dim_k\Ext_R^n(k,M)$ and $\mu_R^n(M)=\dim_k\Ext_R^n(k,M)$.
\item
We put $\nu_R(M)=\beta_0^R(M)$ and $\r_R(M)=\mu^u_R(M)$ where $u=\depth_RM$.
These integers are respectively called the {\em minimal number of generators} and {\em type} of $M$.
Note that $\beta_n^R(M)=\nu_R(\syz^nM)$ for $n\ge0$.
\item
We denote by $\P_M(t)$ the {\em Poincar\'e series} of $M$, that is to say, $\P_M(t)=\sum_{i\ge0}\beta_i(M)t^i\in\Z[\![t]\!]$.
\end{enumerate}
\end{dfn}

Next we recall some terminology concerning a local ring.

\begin{dfn}
\begin{enumerate}[(1)]
\item
For a local ring $(R,\m,k)$ we denote by $\edim R$, $\codim R$ and $\codepth R$ the {\em embedding dimension}, {\em (embedding) codimension} and {\em (embedding) codepth} of $R$, that is, $\edim R=\nu_R(\m)=\dim_k\m/\m^2$, $\codim R=\edim R-\dim R$ and $\codepth R=\edim R-\depth R$.
\item
In the case where $R$ is Cohen--Macaulay, $R$ is said to have \emph{minimal multiplicity} if $\e(R)=\codim R+1$.
\item
We say that $R$ is a {\em hypersurface} if there is an inequality $\codepth R\le1$.
This condition is equivalent to saying that there exist a regular local ring $S$ and an element $f\in S$ such that $\widehat R\cong S/(f)$; see \cite[\S5.1]{A}.
\item
We denote by $\Sing R$ the set of prime ideals $\p$ of $R$ such that the localization $R_\p$ of $R$ at $\p$ is not a regular local ring, and call it the \emph{singular locus} of $R$.
\item
We say that a local ring $(R,\m)$ has an \emph{isolated singularity} if the localization $R_\p$ is a regular local ring for any prime ideal $\p$ other than $\m$, or equivalently, if $\Sing R\subseteq\{\m\}$.
\end{enumerate}
\end{dfn}

Now we recall the definitions of a Burch ring and a Burch index, and basic facts on these notions.

\begin{dfn}
\begin{enumerate}[\rm(1)]
\item
An ideal $I$ of $R$ is called {\em Burch} if $\m(I:\m)\ne\m I$ holds.
\item
We say that $R$ is {\em Burch} if there exist a maximal $\widehat R$-sequence $\xx=x_1,\dots,x_n$, a regular local ring $S$ and a Burch ideal $I$ of $S$ such that $\widehat R/(\xx)\cong S/I$.
\item
Suppose that $R$ has depth zero.
Choose a regular local ring $(S,\n)$ and an ideal $I$ of $S$ with $\widehat{R}\cong S/I$.
Note that $\n^2\subseteq\n I:(I:\n)\subseteq\n$.
Assume that $R$ is not a field.
We put $\Burch (R)=\ell_S(\n/(\n I:(I:\n)))$ and call it the \emph{Burch index} of $R$.
This is independent of the choice of $S,I$; see \cite[Theorem 2.3]{DE}.
\end{enumerate}
\end{dfn}

\begin{rem}\label{r24}
\begin{enumerate}[(1)]
\item
If the maximal ideal $\m$ is such that $\m^2=0$, then $R$ is a Burch ring; see \cite[Example 2.2]{burch}.
\item
A Gorenstein local ring $R$ is Burch if and only if it is a hypersurface by \cite[Proposotion 5.1]{burch}.
\item
If $R$ is Cohen--Macaulay with minimal multiplicity and $k$ is infinite, $R$ is Burch by \cite[Proposition 5.2]{burch}.
\item 
Recall that an ideal $I$ of $R$ is called \emph{weakly $\m$-full} if there exists an ideal $J$ of $R$ such that the equality $I=J:\m$.
A weakly $\m$-full ideal $I$ of $R$ with $\depth R/I=0$ is a Burch ideal of $R$; see \cite[Corollary 2.4]{burch}.
\item
One has $\Burch(R)>0$ if and only if $R$ is a Burch ring.
This is straightforward from the definition.
\end{enumerate}
\end{rem}

Finally, we recall fiber products of local rings and some of their fundamental properties.

\begin{dfn}
\begin{enumerate}[(1)]
\item
Let $(R_1,\m_1)$ and $(R_2,\m_2)$ be local rings with common residue field $k$.
Let $\pi_i\colon R_i \to k$ be the canonical surjection with $i=1,2$.
The \emph{fiber product} $R:=R_1\times_kR_2$ of $R_1$ and $R_2$ over $k$ is defined as the subring of the direct product ring $R_1\times R_2$ consiting of elements $(a,b)$ with $\pi_1(a)=\pi_2(b)$.
\item
We say that a local ring $(R,\m,k)$ is a \emph{nontrivial fiber product (over $k$)} provided that there exist two local rings $(R_1,\m_1,k)$ and $(R_2,\m_2,k)$ such that $R\cong R_1\times_kR_2$ and $R_1\ne k\ne R_2$.
\end{enumerate}
\end{dfn}

\begin{rem}\label{33}
\begin{enumerate}[(1)]
\item
Let $(R_1,\m_1),(R_2,\m_2)$ be local rings with common residue field $k$.
Let $\rho_i\colon R \to R_i$ be the projections with $i=1,2$.
Then $I_1:=\Ker \rho_2 \cong \m_1$, $I_2:=\Ker \rho_1 \cong \m_2$, and $R$ is a local ring with maximal ideal $\m:=I_1+I_2$.
As $I_1\cap I_2=0$, there is a direct sum decomposition $\m=I_1\oplus I_2$ of $R$-submodules.
\item
Let $(R,\m,k)$ be a local ring.
As we just saw in (1), if $R$ is a nontrivial fiber product, then the maximal ideal $\m$ is decomposable as an $R$-module.
Conversely, if $\m=I\oplus J$ for some ideals $I$ and $J$ of $R$, then $R\cong R/I\times_k R/J$, whence $R$ is a nontrivial fiber product if moreover $I,J$ are nonzero.
\item
When $R=S\times_kT$ is a nontrivial fiber product, $\depth R=\min\{1, \depth S,\depth T\}$ by \cite[Remark 3.1]{CSV}.
\item
By (2) and \cite[Proposition 6.15]{burch}, a nontrivial fiber product $R_1\times_k R_2$ is a Burch ring if and only if either of the following two conditions is satisfied.\\
(i) One has $\min\{\depth R_1,\depth R_2\}\ge 1$.\quad
(ii) Either $R_1$ or $R_2$ is a Burch ring of depth zero.
\end{enumerate}
\end{rem}

\section{Dominant local rings and dominant indices}

In this section, we recall the definitions of a dominant local ring, uniformly dominant local ring and the dominant index of a local ring.
The main purpose of this section is to provide a module-theoretic description of a dominant index.
First of all, we recall the definition of a singularity category.

\begin{dfn}
The {\em singularity category} $\ds(R)$ of $R$ is defined to be the Verdier quotient of the bounded derived category $\db(\mod R)$ of $\mod R$ by perfect complexes over $R$.
\end{dfn}

By definition, $\ds(R)$ is a triangulated category.
This category has been introduced by Buchweitz \cite{B}.
If the local ring $R$ is Gorenstein, then $\ds(R)$ is equivalent as a triangulated category to the stable category of maximal Cohen--Macaulay $R$-modules; see \cite[Theorem 4.4.1]{B}.
There are a lot of works due to Orlov concerning singularity categories in relation to homological mirror symmetry; see \cite{O1,O2,O3} for instance.
The following are basic facts on singularity categories; for the proofs, we refer the reader to \cite[Lemma 2.4]{sing}.

\begin{rem}\label{21}
\begin{enumerate}[\rm(1)]
\item
For all $X\in\ds(R)$ there exist $M\in\mod R$ and $n\in\Z$ such that $X\cong M[n]$ in $\ds(R)$.
This isomorphism implies that $X$ is a nonzero object of $\ds(R)$ if and only if $\pd_RM=\infty$.
\item
Let $M$ be an $R$-module and $n\ge0$ an integer.
Then $M\cong(\syz^nM)[n]$ in $\ds(R)$.
\end{enumerate}
\end{rem}

Next we recall some notions on arbitrary triangulated categories.
The notions of thick subcategories and levels have been introduced respectively by Verdier \cite{V} and Avramov, Buchweitz, Iyengar and Miller \cite{ABIM}.

\begin{dfn}
Let $\T$ be a triangulated category.
\begin{enumerate}[(1)]
\item
A {\em thick subcategory} of $\T$ is defined as a triangulated subcategory of $\T$ closed under direct summands.
\item
Let $T$ be an object of $\T$.
The {\em thick closure} of $T$ in $\T$, which is denoted by $\thick_\T T$, is by definition the smallest thick subcategory of $\T$ containing $T$.
\item
Let $\X$ and $\Y$ be subcategories of $\T$.
We denote by $\langle\X\rangle$ the smallest subcategory of $\T$ which contains $\X$ and is closed under finite direct sums, direct summands and shifts.
We denote by $\X\ast\Y$ the subcategory of $\T$ consisting of objects $T\in\T$ which fits into an exact triangle $X\to T\to Y\rightsquigarrow$ in $\T$ such that $X\in\X$ and $Y\in\Y$.
We set $\langle\X\rangle_0^\T=0$, and $\langle\X\rangle_n^\T=\langle\langle\X\rangle_{n-1}^\T\ast\langle\X\rangle\rangle$ for $n\ge1$.
\item
For two objects $X$ and $Y$ of $\T$, we denote by $\level_\T^X(Y)$ the {\em level} of $Y$ with respect to $X$, which is defined to be the infimum of integers $n\ge-1$ such that $Y$ belongs to $\langle X\rangle_{n+1}$.
\end{enumerate}
\end{dfn}

Now, let us recall the notions of dominance, uniform dominance and dominant indices.

\begin{dfn}
\begin{enumerate}[(1)]
\item
We say that $R$ is {\em dominant} if $k\in\thick_{\ds(R)}X$ for all nonzero objects $X$ of $\ds(R)$.
\item
The {\em dominant index} of $R$ is defined by the following equalities:
$$
\begin{array}{l}
\dx(R)=\sup\{\level_{\ds(R)}^X(k)\mid0\ne X\in\ds(R)\}\\
\phantom{\dx(R)}=\inf\{n\in\Z_{\ge-1}\mid\text{$k\in\langle X\rangle_{n+1}$ for all nonzero objects $X$ of $\ds(R)$}\}.
\end{array}
$$
We say that the local ring $R$ is {\em uniformly dominant} provided that $\dx(R)<\infty$.
\end{enumerate}
\end{dfn}

\begin{rem}
\begin{enumerate}[(1)]
\item
By definition, a uniformly dominant local ring is a dominant local ring.
\item
A local ring $R$ is regular if and only if it satisfies the equality $\dx(R)=-1$.
\end{enumerate}
\end{rem}

In the rest of this section, we shall interpret the dominant index of $R$ without using the singularity category of $R$.
For this, we recall the definition of a Spanier--Whitehead category of the stable category of $\mod R$ and the notion of generation in $\mod R$ introduced by Dao and Takahashi \cite{radius}, and establish a lemma on them.

\begin{dfn}
\begin{enumerate}[\rm(1)]
\item
We denote by $\lmod R$ the {\em stable category} of $\mod R$, which is defined as follows.
The objects are the same as those of $\mod R$, that is, the (finitely generated) $R$-modules.
For two $R$-modules $M$ and $N$ the hom-set is defined as $\lhom_R(M,N)=\Hom_R(M,N)/\P_R(M,N)$, where $\P_R(M,N)$ consists of the homomorphisms $f:M\to N$ which decomposes as $f:M\to F\to N$ for some free $R$-module $F$.
\item
We denote by $\sw(\lmod R)$ the {\em Spanier--Whitehead category} of $\lmod R$, which is defined as follows.
The objects are the pairs $(M,m)$ of objects $M\in\lmod R$ and integers $m$. 
For two objects $(M,m)$ and $(N,n)$ the hom-set is defined by $\Hom_{\sw(\lmod R)}((M,m),(N,n))=\varinjlim_{i\ge m,n}\lhom_R(\syz^{i-m}M,\syz^{j-n}N)$.
\item
Let $\X$ and $\Y$ be subcategories of $\mod R$.
We denote by $[\X]$ the smallest subcategory of $\mod R$ containing $R$ and the modules in $\X$, and closed under finite direct sums, direct summands and syzygies.
Denote by $\X\circ\Y$ the subcategory of $\mod R$ consisting of $R$-modules $M$ which fits into an exact sequence $0\to X\to M\to Y\to0$ of $R$-modules with $X\in\X$ and $Y\in\Y$.
Set $[\X]_0=0$, and $[\X]_n=[[\X]_{n-1}\circ[\X]]$ for $n\ge1$.
\end{enumerate}
\end{dfn}

\begin{lem}\label{31}
\begin{enumerate}[\rm(1)]
\item
The category $\sw(\lmod R)$ is triangulated.
One has a triangle equivalence $\sw(\lmod R)\cong\ds(R)$, which sends $(M,m)$ to the complex $(\cdots\to0\to M\to0\to\cdots)$ where $M$ sits in degree $-m$.
\item
Let $(M,m)$ and $(N,n)$ be two objects of $\sw(\lmod R)$, and let $h$ be a nonnegative integer.
If  the object $(M,m)$ belongs to $\langle(N,n)\rangle_{h+1}$, then the syzygy $\syz^rM$ belongs to $[N]_{h+1}$ for some nonnegative integer $r$.
\item
Let $M$ be an $R$-module and $h$ a nonnegative integer.
The following are equivalent.
\begin{enumerate}[\rm(a)]
\item
For every nonzero object $X\in\ds(R)$, the $R$-module $M$, as an object of $\ds(R)$, belongs to $\langle X\rangle_{h+1}$.
\item
For every nonzero object $(N,n)\in\sw(\lmod R)$, the object $(M,0)$ belongs to $\langle(N,n)\rangle_{h+1}$.
\item
For each $R$-module $N$ with $\pd N=\infty$, there exists an integer $r\ge0$ such that $\syz^rM$ belongs to $[N]_{h+1}$.
\end{enumerate}
\end{enumerate}
\end{lem}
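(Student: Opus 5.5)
For part (1), I will rely on Buchweitz's description of the singularity category as the stabilization of the stable module category. Concretely, I will use that the syzygy functor $\syz\colon\lmod R\to\lmod R$ becomes invertible after passing to $\sw(\lmod R)$, with $(M,m)[1]=(M,m+1)$ and $(\syz M,m)\cong(M,m-1)$. I will also use that this category carries the triangulated structure induced by short exact sequences of modules: every exact sequence $0\to L\to M\to N\to0$ gives a triangle $(L,0)\to(M,0)\to(N,0)\to(L,1)$. The functor $(M,m)\mapsto M[m]$ (with $M$ in degree $-m$) is well defined because $\syz M[1]\cong M$ in $\ds(R)$ by Remark \ref{21}(2). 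It is fully faithful since $\Hom_{\ds(R)}(M,N)\cong\varinjlim\lhom(\syz^iM,\syz^iN)$, which is Buchweitz's theorem, and it is dense by Remark \ref{21}(1). I will simply cite \cite{B} (or the standard reference on Spanier--Whitehead categories) for these facts rather than reprove them.

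For part (2), I will argue by induction on $h$, proving the stronger statement that for each $X=(M,m)\in\langle(N,n)\rangle_{h+1}$ there is $r\ge0$ with $\syz^rM\in[N]_{h+1}$. The base case $h=0$: an object of $\langle(N,n)\rangle$ is a direct summand of a finite sum of shifts $(N,n_i)$. After applying a high enough syzygy, everything lives in a common degree, so a summand of $\bigoplus\syz^{a_i}N$ is isomorphic in $\lmod R$ to a high syzygy of $M$. An isomorphism in the stable category gives an isomorphism of modules up to free summands, and $[N]$ contains $R$ and is closed under summands, so the base case holds. The inductive step starts from a triangle $X'\to X\to Y\to$ with $X'\in\langle N\rangle_h$ and $Y\in\langle N\rangle$. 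I will use that, up to shift and isomorphism, every triangle in $\sw(\lmod R)$ comes from a short exact sequence $0\to A\to B\to C\to0$ of modules, where $A$, $B$, $C$ are high syzygies; I get this by taking a pushout along a free cover of a representative of the map $Y[-1]\to X'$ after syzygy-shifting. Applying the induction hypothesis, $\syz^sA\in[N]_h$ and $\syz^sC\in[N]$ for some $s$. The horseshoe lemma gives an exact sequence $0\to\syz^sA\to\syz^sB\oplus F\to\syz^sC\to0$ with $F$ free, so $\syz^sB\in[N]_{h+1}$. Finally $B$ agrees stably with a syzygy of $M$ up to a further shift and free summands, which gives the claim. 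The main obstacle is the bookkeeping in realizing triangles by short exact sequences of genuine modules, keeping track of the uniform shift $r$, and justifying that stable isomorphisms of syzygies become module isomorphisms up to free summands. The last point is fine for syzygies because they have no free summands only up to free parts, and $R\in[N]$ absorbs them.

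For part (3), the equivalence (a)$\Leftrightarrow$(b) is a direct translation via the equivalence of part (1), since nonzero objects correspond. For (b)$\Rightarrow$(c), I take $(N,0)$ with $\pd N=\infty$, which is nonzero, and apply part (2). For (c)$\Rightarrow$(b), any nonzero $(N,n)$ is a shift of $(N,0)$ with $\pd N=\infty$, and $\langle-\rangle_{h+1}$ is shift-invariant. It then suffices to show that $\syz^rM\in[N]_{h+1}$ implies $(\syz^rM,0)\in\langle(N,0)\rangle_{h+1}$, and hence $(M,0)\cong(\syz^rM,r)\in\langle(N,0)\rangle_{h+1}$. I will check this by induction on $h$. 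Syzygies become shifts, free modules are zero, and sums and summands are preserved. A short exact sequence yields a triangle, so $[\X]\circ$ steps translate into $\ast$ steps.
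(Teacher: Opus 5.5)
Your proposal is correct and follows essentially the same route as the paper. Part (1) is by citation. Part (2) is by induction on $h$: you lift isomorphisms in $\sw(\lmod R)$ between degree-zero objects to isomorphisms of high syzygies in $\lmod R$, and you realize triangles by short exact sequences of syzygies up to free summands. Part (3) translates between $[N]_{h+1}$ and $\langle(N,0)\rangle_{h+1}$. The only real difference is that you reprove, via pushouts and the horseshoe lemma, what the paper simply quotes from \cite[Lemma 5.2]{sw}.

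One small fix is needed in the inductive step of (2). Since $\langle X\rangle_{h+1}$ is closed under direct summands, $(M,m)$ is in general only a direct summand of the middle term $(B,b)$ of such a triangle, not isomorphic to it. So you need the summand version of your base-case lifting: some $\syz^{w-m}M$ is a direct summand of $\syz^{w-b}B$ up to free summands. This is exactly what the paper obtains from \cite[Lemma 5.2(3)]{sw}.
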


\begin{proof}
(1) The assertion is a direct consequenece of  \cite[Definition 3.1 and Theorem 3.8]{Be}.
We also refer the reader to \cite[Definition 2.4 and Theorem 3.2(3)]{sw}.

(2) We use induction on $h$.
First, let $h=0$.
Then the object $(M,m)$ belongs to $\langle(N,n)\rangle$.
Hence, there exists an isomorphism $(M,m)\oplus(L,l)\cong\bigoplus_{i=1}^s(N,n+a_i)^{\oplus b_i}$ in $\sw(\lmod R)$.
Applying the shift functor $[-t]$ for $t\gg0$, we have an isomorphism $(M,m-t)\oplus(L,l-t)\cong\bigoplus_{i=1}^s(N,n+a_i-t)^{\oplus b_i}$ so that the integers $m-t,l-t,n+a_i-t$ are all nonpositive.
We get an isomorphism $(\syz^{t-m}M,0)\oplus(\syz^{t-l}L,0)\cong\bigoplus_{i=1}^s(\syz^{t-n-a_i}N,0)^{\oplus b_i}$ in $\sw(\lmod R)$, which induces an isomorphism $\syz^{u+t-m}M\oplus\syz^{u+t-l}L\cong\bigoplus_{i=1}^s(\syz^{u+t-n-a_i}N)^{\oplus b_i}$ in $\lmod R$ for some $u\ge0$; see \cite[Corollary 3.3(3)]{Be} (or \cite[Lemma 5.2(2)]{sw}).
It follows that the module $\syz^{u+t-m}M$ belongs to $[N]$.

Next, let $h\ge1$.
There exists an exact triangle $(A,a)\to(B,b)\to(C,c)\rightsquigarrow$ in $\sw(\lmod R$) with $(A,a)\in\langle(N,n)\rangle_h$ and $(C,c)\in\langle(N,n)\rangle$ such that $(M,m)$ is a direct summand of $(B,b)$.
The induction hypothesis and basis imply $\syz^pA\in[N]_h$ and $\syz^qC\in[N]$ for some $p,q\ge0$.
Applying \cite[Lemma 5.2(4)]{sw}, we get an exact sequence $0\to\syz^{v-a}A\to\syz^{v-b}B\to\syz^{v-c}C\to0$ of $R$-modules up to free summands for some integer $v\ge a,b,c$.
Also, there is an integer $w\ge b,m$ such that $\syz^{w-m}M$ is a direct summand of $\syz^{w-b}B$ in $\lmod R$ by \cite[Lemma 5.2(3)]{sw}.
Put $r=\max\{p,q\}$ and $x=\max\{v,w\}$.
We see that, up to free summands, there is an exact sequence $0\to\syz^{r+x-a}A\to\syz^{r+x-b}B\to\syz^{r+x-c}C\to0$ whose middle term $\syz^{r+x-b}B$ contains $\syz^{r+x-m}M$ as a direct summand.
Note that $\syz^{r+x-a}A\in[N]_h$ and $\syz^{r+x-c}C\in[N]$.
We observe that $\syz^{r+x-m}M$ belongs to $[N]_{h+1}$.

(3)
(a)$\Leftrightarrow$(b):
This equivalence is an immediate consequence of the triangle equivalence given in (1).

(c)$\Rightarrow$(b):
Let $(N,n)$ be a nonzero object of $\sw(\lmod R)$.
Then by (1) the complex $(\cdots\to0\to N\to0\to\cdots)$ with $N$ sitting in degree $-n$ is a nonzero object of $\ds(R)$.
Hence the $R$-module $N$ has infinite projective dimension.
By assumption, there exists an integer $r\ge0$ such that $\syz^rM$ belongs to $[N]_{h+1}$.
Using \cite[Lemma 5.2(1)]{sw}, we get $(M,-r)=(\syz^rM,0)\in\langle(N,0)\rangle_{h+1}=\langle(N,n)\rangle_{h+1}$.
Thus the object $(M,0)$ is in $\langle(N,n)\rangle_{h+1}$.

(b)$\Rightarrow$(c):
Let $N$ be an $R$-module with $\pd N=\infty$.
Then the complex $(\cdots\to0\to N\to0\to\cdots)$ with $N$ sitting in degree $-n$ is a nonzero object of $\ds(R)$.
By (1), the object $(N,n)$ of $\sw(\lmod R)$ is nonzero.
By assumption, the object $(M,0)$ is in $\langle(N,n)\rangle_{h+1}$.
It follows from (2) that $\syz^rM$ is in $[N]_{h+1}$ for some $r\ge0$.
\end{proof}

Applying the equivalence (a)$\Leftrightarrow$(c) in Lemma \ref{31}(3) to the residue field of a local ring $R$, we readily get the proposition below, which enables us to understand the dominant index $\dx(R)$ only in terms of $R$-modules.

\begin{prop} \label{218}
For a local ring $R$ with residue field $k$, the following equality holds true.
$$
\dx(R)=\inf\{n\in\Z_{\ge-1}\mid\text{for any $R$-module $M$ with $\pd M=\infty$, there exists $r\in\N$ with $\syz^rk\in[M]_{n+1}$}\}.
$$
In particular, the dominant index $\dx(R)$ is at most zero if and only if for any $R$-module $M$ with $\pd M=\infty$, there exist $r,s,n_0,\dots,n_s\in\N$ such that $\syz^r k$ is a direct summand of $\bigoplus_{i=0}^s\syz^iM^{\oplus n_i}$.
\end{prop}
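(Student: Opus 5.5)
The plan is to deduce Proposition \ref{218} directly from Lemma \ref{31}(3), applied with $M=k$. First I rewrite the definition of $\dx(R)$ as the infimum of those $n\ge-1$ such that $k\in\langle X\rangle_{n+1}$ for every nonzero $X\in\ds(R)$. The goal is to show this set of integers coincides with the set on the right-hand side of the proposition.

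For $n\ge0$, put $h=n$. Lemma \ref{31}(3), equivalence (a)$\Leftrightarrow$(c) with $M=k$, says that $k\in\langle X\rangle_{h+1}$ for all nonzero $X$ if and only if, for each $N$ with $\pd N=\infty$, some $\syz^rk$ lies in $[N]_{h+1}$. So the two sets agree in every degree $n\ge0$.

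The value $n=-1$ needs a separate check, since Lemma \ref{31} requires $h\ge0$.
\begin{enumerate}[(i)]
\item On the left, $k\in\langle X\rangle_0=0$ means $k\cong0$ in $\ds(R)$, that is, $\pd k<\infty$, that is, $R$ is regular.
\item On the right, $\syz^rk\in[M]_0=0$ means $\syz^rk=0$ for some $r$, which is again equivalent to $\pd k<\infty$.
\end{enumerate}
If $R$ is regular, there is moreover no module of infinite projective dimension, so the right-hand condition holds vacuously, and $-1$ belongs to both sets. If $R$ is not regular, $-1$ belongs to neither: the left side fails because $k\ne0$ in $\ds(R)$, and the right side fails because a module of infinite projective dimension (for instance $k$) exists while no $\syz^rk$ vanishes. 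Hence the two infima agree.

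For the ``in particular'' clause, $\dx(R)\le0$ means $n=0$ lies in the set, since the set is upward closed because $[M]_j\subseteq[M]_{j+1}$. So I need to unwind $[M]_1=[M]$. By definition, $[M]$ is the closure of $\{R,M\}$ under finite direct sums, direct summands and syzygies. Its objects are therefore direct summands of modules $R^{\oplus a}\oplus\bigoplus_{i=0}^s(\syz^iM)^{\oplus n_i}$, using that syzygies commute with direct sums and that syzygies of direct summands are summands of syzygies.

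The remaining point is that a free summand $R^{\oplus a}$ can be dropped. I would argue this in the only delicate step: if $\syz^rk$ is a direct summand of $F\oplus L$ with $F$ free, pass to $\syz^{r+1}k$. Since $\syz(F\oplus L)\cong\syz L$ for minimal syzygies, and $\syz^{r+1}k$ is a summand of $\syz(F\oplus L)$, the module $\syz^{r+1}k$ is a summand of $\syz L$. Here $\syz L$ is again a sum of syzygies of $M$, now in degrees $1,\dots,s+1$. Alternatively, by Krull--Schmidt over the completion, $\syz^rk$ has no free summand once $r\ge1$ (a free summand would contradict $\syz^rk\subseteq\m F$), so this also handles it. I expect this bookkeeping with free summands to be the only mildly nontrivial point; the rest is formal.
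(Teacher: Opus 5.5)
Your proposal is correct and follows the paper's route: the paper obtains the proposition by applying the equivalence (a)$\Leftrightarrow$(c) of Lemma \ref{31}(3) to $k$. Your separate treatment of $n=-1$, and your removal of free summands by passing from $\syz^rk$ to $\syz^{r+1}k$, supply details the paper leaves implicit. Drop the parenthetical ``alternative'', though. A submodule of $\m F$ can have a free summand (over a DVR, $\syz^1k=\m\cong R$), and the fact that $\syz^rk$ has no free summand over a non-regular ring is a nontrivial theorem of Dutta, not a consequence of $\syz^rk\subseteq\m F$.
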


\section{Sufficient conditions for Burchness}

In this section, we shall provide sufficient conditions for a local ring to have positive Burch index in terms of several numerical invariants such as Hilbert functions, types, and lengths.
We begin with recalling the definitions of the Hilbert function of a local ring and a stretched artinian local ring.

\begin{dfn}
\begin{enumerate}[(1)]
\item
For a local ring $(R,\m,k)$ and an integer $i\ge 0$, set $h_i=\hh_i(R)=\dim_k\m^i/\m^{i+1}$.
The function $\hh_R\colon \N \to \N$ given by $i \mapsto h_i$ is called the \emph{Hilbert function} of $R$.
Clearly, there exists $n\ge0$ such that $h_i=0$ for any $i>n$, and then we say that $(h_0,h_1,\dots,h_n)$ is the Hilbert function of $R$.
\item
The \emph{socle degree} of an artinian local ring $R$ is the largest integer $s=\s(R)$ such that $h_s\not=0$.
Note that $\m^s\subseteq \soc R$.
We say that $R$ is \emph{stretched} if $h_2\le 1$.
In this case, we have $s(R)=\ell(R)-e$, and $R$ has Hilbert function $(1,e, 1,\dots, 1)$ with $e=\edim R$ by Macaulay's theorem \cite[Theorem 4.2.10]{BH}.
\end{enumerate}
\end{dfn}

We give criteria for an ideal of a regular local ring to be Burch, which are used several times later.

\begin{lem} \label{l25}
Let $R$ be a regular local ring, and let $I$ be a nonzero ideal of $R$.
\begin{enumerate}[\rm(1)]
\item
If $I$ is a Burch ideal of $R$, then the local ring $R/I$ has depth zero.
\item
Suppose that $R/I$ has depth zero.
Then the following four conditions are equivalent.\\
{\rm(a)} $I$ is Burch.\qquad
{\rm(b)} $R/I$ is Burch.\qquad
{\rm(c)} $\r(R/\m I)<\r(R/I)+\nu(I)$.\qquad
{\rm(d)} $\nu(I:\m)<\r(R/I)+\nu(I)$.
\item
Suppose that $R/I$ is an artinian stretched local ring with $e:=\edim R/I=\edim R$.
Then one has that $\nu(I)=\binom{e+1}{2}-1$ if $\r(R/I)<e$, and $\nu(I)=\binom{e+1}{2}$ if $\r(R/I)=e$.
\end{enumerate}
\end{lem}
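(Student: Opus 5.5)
Throughout, $(R,\m,k)$ is the regular local ring of the statement, $e=\dim R$, and I write $Q=R/I$.

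\emph{Part (1).} Suppose $R/I$ has positive depth. Then $\m$ contains a nonzerodivisor on $R/I$, so $I:\m=I$. Hence $\m(I:\m)=\m I$, and $I$ is not Burch, which proves (1).

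\emph{Part (2).} I will assume $\depth R/I=0$, so $I\subsetneq I:\m$.

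First, (a)$\Leftrightarrow$(b). If $I$ is Burch, then $R/I$ is Burch by definition, taking the empty sequence as the maximal regular sequence. Conversely, if $R/I$ is Burch, the maximal $\widehat{R/I}$-sequence is empty since the depth is zero. So $\widehat R/\widehat I\cong S/J$ with $S$ regular and $J$ Burch. The Burch index is independent of presentation by \cite[Theorem 2.3]{DE}, and Burch index positive is equivalent to $\m(I:\m)\ne\m I$; indeed $\n J:(J:\n)\ne\n$ iff $\n(J:\n)\not\subseteq\n J$. Applying this with $S=\widehat R$ and passing along the faithfully flat map $R\to\widehat R$ gives that $I$ is Burch. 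Alternatively, one can simply cite Remark \ref{r24}(5) together with this independence.

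Next, (a)$\Leftrightarrow$(c)$\Leftrightarrow$(d) is a length count. Since $\m I\subseteq\m(I:\m)\subseteq I\cap\m(I:\m)$, the condition $\m(I:\m)\ne\m I$ says that $\m(I:\m)$ contributes beyond $\m I$.

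For (d), use
$$\nu(I:\m)=\ell\big((I:\m)/\m(I:\m)\big).$$
Filter the quotient $(I:\m)/\m I$ by $I/\m I$. The piece $(I:\m)/I$ is the socle of $R/I$ and has dimension $\r(R/I)$, while $I/\m I$ has dimension $\nu(I)$. Hence
$$\ell\big((I:\m)/\m I\big)=\r(R/I)+\nu(I),$$
and $\nu(I:\m)$ equals this number minus $\ell\big(\m(I:\m)/\m I\big)$. Therefore (d) holds iff $\m(I:\m)\ne\m I$, which is (a).

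For (c), the socle of $R/\m I$ is $(\m I:\m)/\m I$. We have $(I:\m)\subseteq(\m I:\m)$ exactly when $\m(I:\m)\subseteq\m I$. Also $\m I:\m\subseteq I:\m$ always holds, because $\m\cdot(\m I:\m)\subseteq\m I\subseteq I$. Hence $\r(R/\m I)\le\ell\big((I:\m)/\m I\big)=\r(R/I)+\nu(I)$, with equality iff $\m I:\m=I:\m$, i.e. iff $\m(I:\m)\subseteq\m I$. So (c) is equivalent to (a). This completes (2).

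\emph{Part (3).} Assume $Q=R/I$ is stretched with Hilbert function $(1,e,1,\dots,1)$ and socle degree $s$. I will compute $\nu(I)$ through the Hilbert function of the associated graded ring. In the main case $s\ge2$, choose coordinates with $\m^2\not\subseteq I+\m^3$.

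The general principle is that the number of generators of $I$ in the stretched case is governed by degree-$2$ relations. There are $\binom{e+1}{2}-1$ quadrics, since $\dim\m^2/\m^3=\binom{e+1}{2}$ and $h_2=1$. In addition, one needs possibly one more generator in degree $s+1$ or so, namely $x^{s+1}$ for a variable $x$ with $x^s\ne0$.

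To organize this, use Sally's structure theorem for stretched rings. There are $x$ and $y_1,\dots,y_{e-1}$ forming a regular system of parameters with $\m^2=(x^2)+\m^3$ modulo $I$ and $x y_i\in\m^{3}+I$ modulo suitable adjustments. Then:
\begin{itemize}
\item The socle of $Q$ is spanned by $x^s$ together with those $y_i$ whose products $y_iy_j$, $xy_i$ are all absorbed into $I$.
\item The type $\r(Q)\le e$, with equality iff all $y_i$ are socle elements modulo $I$.
\end{itemize}
By the analysis in \cite{Sally}-style normal forms, $I$ is generated by the following:
\begin{itemize}
\item $xy_i-(\text{higher})$ and $y_iy_j-u_{ij}x^s$ for $i\le j$ with $(i,j)$ not the distinguished pair;
\item when $\r(Q)<e$, one relation $y_{e-1}^2-ux^s$ serves to cut $x^{s+1}$, so $x^{s+1}$ is redundant and the count is $\binom{e+1}{2}-1$;
\item when $\r(Q)=e$, all $y_iy_j,xy_i\in I$ up to corrections by $x^s$, and $x^{s+1}$ is needed as an extra minimal generator, giving $\binom{e+1}{2}$.
\end{itemize}

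An alternative, cleaner route uses (2)(c)-type length counting. Here $\nu(I)=\ell(I/\m I)$ and $\ell(R/\m I)=\ell(Q)+\nu(I)$. One can also compute $\ell(R/\m I)$ directly from $\m I\supseteq\m^{s+2}$ and the Hilbert function; this is what I would try.

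The main obstacle is (3): pinning down the minimal generators precisely, and in particular whether $x^{s+1}$ is a minimal generator. I expect that to be decided exactly by whether some quadric $y_iy_j$ has a nonzero $x^s$-component, which is equivalent to $\r(Q)<e$.
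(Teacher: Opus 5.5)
Your proofs of (1) and (2) are essentially correct, but (3) has a genuine gap: what you give there is a sketch, not a proof.

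\textbf{Parts (1) and (2).} Your (a)$\Leftrightarrow$(d) is the paper's length count. For (a)$\Leftrightarrow$(c) you compare $\soc(R/\m I)=(\m I:\m)/\m I$ directly with $(I:\m)/\m I$. This is a valid self-contained substitute for the paper's appeal to \cite[Proposition 2.3]{burch}, once you note that $\depth R/\m I=0$, because $0\ne I/\m I\subseteq\soc(R/\m I)$. In (a)$\Leftrightarrow$(b) two small repairs are needed:
\begin{itemize}
\item The forward direction is not literally ``by definition''. You need $I\widehat R$ to be Burch in $\widehat R$, which follows from faithful flatness.
\item For the converse, the Burch index is undefined when $R/I$ is a field. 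So $I=\m$ must be checked by hand, using $\m(\m:\m)=\m\ne\m^2$.
\end{itemize}

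\textbf{The gap in (3).} The only solid piece is the lower bound $\nu(I)\ge\binom{e+1}{2}-1$ for $s\ge2$. It holds because $I\subseteq\m^2$ has image of codimension $h_2=1$ in $\m^2/\m^3$. The case $s\le1$, where $I=\m^2$ and $\r(Q)=e$, is not treated. The whole content of (3) is whether one generator beyond the quadrics is needed, and you leave exactly that as an expectation.

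Your normal form is asserted, not derived. As you state it ($\m^2=(x^2)+\m^3$ modulo $I$), it is false when $s=2$ and $\operatorname{char}k=2$: in $k[x,y,z]/(x^2,y^2,z^2,xz,yz)$ every element of the maximal ideal squares to zero. Your ``alternative route'' is circular. Since $\ell(R/\m I)=\ell(Q)+\nu(I)$, computing $\ell(R/\m I)$ is the same problem, and $\m I\supseteq\m^{s+2}$ only bounds it from above.

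\textbf{What the paper does, and a direct fix.} The paper does not prove (3) by hand; it deduces it from Sally's result on stretched rings \cite[Theorem 2]{Sa} combined with \cite[Theorem 2.3.2]{BH}. A direct argument along your lines can be made to work as follows.
\begin{itemize}
\item Suppose some $x\in\m_Q$ has $x^2\notin\m_Q^3$; this can always be arranged when $s\ge3$ or $\operatorname{char}k\ne2$. Then $\m_Q^2=(x^2)$ by Nakayama.
\item Write $xy_i=a_ix^2$ and replace $y_i$ by $y_i-a_ix$. This gives $xy_i=0$, hence $\m_Q^i=(x^i)$ for $i\ge2$ and $y_iy_j=u_{ij}x^s$.
\item A length count shows $I=(XY_i,\ Y_iY_j-U_{ij}X^s,\ X^{s+1})$, and a socle computation gives $\r(Q)=e-\operatorname{rank}(\overline{u_{ij}})$.
\item The identity $U_{ij}X^{s+1}=Y_j(XY_i)-X(Y_iY_j-U_{ij}X^s)$ puts $X^{s+1}$ in $\m I$ as soon as some $u_{ij}$ is a unit, i.e.\ as soon as $\r(Q)<e$.
\item If no $u_{ij}$ is a unit, then $I=(XY_i,Y_iY_j,X^{s+1})$. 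Reducing modulo $(Y_1,\dots,Y_{e-1})$ shows $X^{s+1}\notin\m I+(XY_i,Y_iY_j)$.
\end{itemize}
The leftover case is $s=2$ with the form $(a,b)\mapsto\overline{ab}$ alternating on $\m/\m^2$. This forces $\r(Q)<e$, and it still needs a separate check that $\m^3\subseteq\m I$.
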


\begin{proof}
The first assertion is immedate from the definition of a Burch ideal.
The third assertion follows from \cite[Theorem 2]{Sa} combined with \cite[Theorem 2.3.2]{BH}.
It remains to show the second assertion.

(a)$\Rightarrow$(b):
Observe that $I\widehat{R}$ is a Burch ideal of $\widehat{R}$, and hence $R/I$ is a Burch ring.

(b)$\Rightarrow$(a):
If $R/I$ is a field, then $I=\m$ and $\m(I:\m)=\m\ne\m^2=\m I$, whence $I$ is Burch.
If $R/I$ is not a field, then the assertion follows from \cite[Lemma 2.11]{burch}.

(a)$\Leftrightarrow$(c):
As $I\ne0$ and $I/\m I\subseteq R/\m I$, we have $\depth R/\m I=0$.
We have $\r(R/\m I)=\ell((\m I:\m)/\m I)\le\ell((I:\m)/\m I)=\ell((I:\m)/I)+\ell(I/\m I)=\r(R/I)+\nu(I)$.
Thus the assertion follows by \cite[Proposition 2.3]{burch}.

(a)$\Leftrightarrow$(d):
If $I$ is a Burch ideal of $R$, then it follows by definition that $R/I$ has depth zero.
The natural exact sequence $0\to I/\m I\to(I:\m)/\m I\to(I:\m)/I\to0$ shows that $\ell((I:\m)/\m I)=\ell((I:\m)/I)+\ell(I/\m I)=\r(R/I)+\nu(I)<\infty$.
Therefore, $I$ is a Burch ideal of $R$ if and only if $\m I\subsetneq\m(I:\m)$ holds, if and only if $\ell((I:\m)/\m(I:\m))<\ell((I:\m)/\m I)$ holds, if and only if $\nu(I:\m)<\r(R/I)+\nu(I)$ holds.
\end{proof}

The following proposition contains sufficient conditions for a local ring to be both a Burch ring and a fiber product.
The implication (c)$\Rightarrow$(d) in the proposition should be well-known to experts, but we do not know a reference.
It can be shown as a consequence of the implications (a)$\Leftarrow$(b)$\Leftarrow$(c), so we include a proof.

\begin{prop} \label{p210}
\begin{enumerate}[\rm(1)]
\item
Let $(R,\m)$ be a local ring.
The implications {\rm(a)}$\Leftarrow${\rm(b)}$\Leftarrow${\rm(c)}$\Rightarrow${\rm(d)} hold true, where
\begin{enumerate}[\rm(a)]
\item
The local ring $R$ is both a Burch ring and a nontrivial fiber product,
\item
One has $\soc R\not\subseteq \m^2$ and $\edim R\ge 2$,
\item
The local ring $R$ is artinian, stretched and non-Gorenstein,
\item
There is an inequality $\r(R)\le\edim R$.
\end{enumerate}
\item
Suppose that an artinian local ring $R$ satisfies $\edim R=3 \le \r(R)$ and $\ell(R) \le \r(R)+5$.
Then $R$ is Burch.
\end{enumerate}
\end{prop}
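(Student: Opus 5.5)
For (1), I would prove (c)$\Rightarrow$(b), then (b)$\Rightarrow$(a), and finally (c)$\Rightarrow$(d) directly from the Hilbert-function structure used for (c)$\Rightarrow$(b).

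\emph{(b)$\Rightarrow$(a).} Pick $x\in\soc R\setminus\m^2$, and let $J$ be the preimage in $\m$ of a hyperplane of $\m/\m^2$ not containing the image of $x$. Then $J\supseteq\m^2$ and $J+(x)=\m$. Since $x$ is a socle element, $(x)=kx$ is one-dimensional; as $x\notin J$, we get $(x)\cap J=0$. Hence $\m=(x)\oplus J$, and $J\ne0$ because $\edim R\ge2$. By Remark \ref{33}(2), $R\cong R/(x)\times_kR/J$ is a nontrivial fiber product. The factor $R/J$ has maximal ideal $\m/J$ of square zero (since $J\supseteq\m^2$), so it is a Burch ring of depth zero by Remark \ref{r24}(1). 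Remark \ref{33}(4)(ii) then shows that $R$ is Burch.

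\emph{(c)$\Rightarrow$(b).} Write the Hilbert function as $(1,e,1,\dots,1)$ with socle degree $s$. We have $e\ge2$, since $e=1$ forces $R\cong k[x]/(x^n)$, which is Gorenstein. If $s\le1$, then $\m^2=0$, and $\soc R=\m\ne0$ is not contained in $\m^2$. If $s\ge2$, then $\m^2/\m^3$ is one-dimensional, so Nakayama gives that $\m^2$ is principal. Hence the ideals $\m^i$ with $i\ge2$ form a chain with one-dimensional quotients, and the only socle elements lying in $\m^2$ are those of $\m^s$. Thus $\soc R\subseteq\m^2$ would force $\soc R=\m^s$ to be one-dimensional, i.e.\ $R$ Gorenstein, a contradiction.

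\emph{(c)$\Rightarrow$(d).} The same fact gives $\dim_k(\soc R\cap\m^2)\le1$, with equality exactly when $s\ge2$. The map $\soc R/(\soc R\cap\m^2)\hookrightarrow\m/\m^2$ then gives $\r(R)\le e$ when $s\le1$ (indeed $\r(R)=e$). When $s\ge2$, the image is a proper subspace: otherwise $\m\subseteq\soc R+\m^2$, so $\m^2\subseteq\m\soc R+\m^3=\m^3$, forcing $\m^2=0$. This yields $\r(R)\le(e-1)+1=e$.

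For (2), I would pass to the completion, write $\widehat R=S/I$ with $(S,\n)$ regular of dimension $3$ and $I\subseteq\n^2$, and use the criterion $\nu(I:\n)<\r(R)+\nu(I)$ of Lemma \ref{l25}(2)(d). If $\soc R\not\subseteq\m^2$, we are done by (1) (b)$\Rightarrow$(a). So assume $\soc R\subseteq\m^2$. Then $\r(R)\le\ell(\m^2)=\ell(R)-4\le\r(R)+1$, leaving two cases.

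\emph{Case $\ell(\m^2)=\r(R)$.} Here $\m^2=\soc R$, so $\m^3=0$, $I:\n=\n^2$ and $\nu(I:\n)=6$. The ideal $I$ contains a subspace of $\n^2$ of dimension $6-h_2$ modulo $\n^3$, and it must also generate $\n^3$ modulo $\n I$. I would count the minimal generators of $I$, using $h_2=\r(R)\ge3$ and that $\n I\cap\n^3$ misses a nontrivial part of $\n^3$, to get $\nu(I)>6-\r(R)$.

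\emph{Case $\ell(\m^2)=\r(R)+1$.} Now $\soc R$ has codimension $1$ in $\m^2$. The Hilbert function is $(1,3,h_2,\dots)$ with $h_2+h_3+\cdots=\r(R)+1$, and either $\m^3=0$ or $h_3=1$ with $\m^4=0$. In each subcase I would describe $I:\n$ explicitly, compute $\nu(I:\n)$ and bound $\nu(I)$ from below, again to obtain the strict inequality in (d).

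I expect this generator count in (2) to be the main obstacle, especially bounding $\nu(I)$ from below through the degree-$3$ part. I would try the criterion $\r(S/\n I)<\r(R)+\nu(I)$ of Lemma \ref{l25}(2)(c) first if the direct count of $\nu(I)$ becomes awkward.
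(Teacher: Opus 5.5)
Your part (1) is correct. The (b)$\Rightarrow$(a) and (c)$\Rightarrow$(b) steps are the paper's arguments made explicit. Your direct proof of (c)$\Rightarrow$(d) uses $\soc R\cap\m^2=\m^s$ and the fact that $\soc R$ cannot map onto $\m/\m^2$ when $s\ge2$. It is a valid and more elementary alternative to the paper's induction on $\edim R$ through the splitting $R\cong S\times_kT$ with $\r(R)=\r(S)+\r(T)$.

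The gap is in part (2). In the case $\ell(\m^2)=\r(R)+1$ you give no argument at all. The phrase ``describe $I:\n$ explicitly, compute $\nu(I:\n)$ and bound $\nu(I)$ from below'' restates the claim rather than proving it, and in this case $I:\n\ne\n^2$, so nothing is actually computed. Your first case can be finished as you suggest. Since $\n^3\subseteq I$, one has $\nu(I)=(6-h_2)+\dim_k\n^3/\n I$. Moreover $\n I\ne\n^3$, because modulo $\n^4$ the ideal $\n I$ yields at most $3(6-h_2)\le9$ cubics, while $\dim_k\n^3/\n^4=10$.

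What you are missing are two observations that settle (2) uniformly, with no case split.
\begin{enumerate}
\item Since $\r(R)\ge3$, the ring $R$ is not Gorenstein, hence not a complete intersection. As $\height I=3$, this gives $\nu(I)\ge4$ immediately, with no generator count.
\item The ring $\overline R=R/\soc R\cong S/(I:\n)$ has length $\ell(R)-\r(R)\le5$. Because $\soc R\subseteq\m^2$, it has embedding dimension $3$. So its Hilbert function is $(1,3)$ or $(1,3,1)$, i.e.\ $\overline R$ is stretched. Part (1), (c)$\Rightarrow$(d), gives $\r(\overline R)\le3$ (trivially if $\overline R$ is Gorenstein). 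Lemma \ref{l25}(3) then gives $\nu(I:\n)\le\binom{4}{2}=6$.
\end{enumerate}
Hence $\nu(I:\n)\le6<7\le\r(R)+\nu(I)$, and Lemma \ref{l25}(2) shows that $R$ is Burch. This is exactly where the paper uses (1)(d); your plan for (2) never uses it beyond the reduction to $\soc R\subseteq\m^2$.
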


\begin{proof}
(1)
(a)$\Leftarrow$(b):
As $\soc R\not\subseteq \m^2$, we find a nonzero composition $k \to \m \to \m/\m^2$, which deduces $\m\cong k\oplus X$ for some $R$-module $X$.
The assumption $\edim R\ge 2$ guarantees that $X$ is nonzero.
Take the corresponding direct sum decomposition $\m=I\oplus J$ of ideals with $I\cong k$ and $J\cong X$.
We have $R\cong R/I\times_k R/J$.
Since the maximal ideal of $R/J$ is isomorphic to $k$, the local ring $R/J$ is an artinian hypersurface which is not a field.
In particular, $R$ is both a nontrivial fiber product and a Burch ring by Remarks \ref{r24} and \ref{33}.

(b)$\Leftarrow$(c):
Let $t\ge1$ be the largest integer such that $\soc R \subseteq \m^t$.
The induced map $\varphi\colon\soc R \to \m^t/\m^{t+1}$ is nonzero.
Assume that $t\ge 2$.
Since $R$ is stretched, we have $\ell(\m^t/\m^{t+1})=1$, which forces $\varphi$ to be surjective.
It follows that $\m^t=\soc R+\m\m^{t}$.
Nakayama's lemma implies that $\soc R=\m^t$.
We get $\r(R)=\nu(\soc R)=1$, which contradicts the assumption that $R$ is not Gorenstein.
Thus we have $t=1$, so that $\soc R\nsubseteq\m^2$.

(c)$\Rightarrow$(d):
We use induction on $\edim R$.
We may assume that $\edim R\ge2$.
As we have shown (c) implies (b) and (b) implies (a), the local ring $R$ is a nontrivial fiber product over $k$.
There are local rings $S,T$ with common residue field $k$ such that $R\cong S\times_k T$.
It is observed that $\r(R)=\r(S)+\r(T)$, $\edim R=\edim S+\edim T$, and $\hh_i(R)=\hh_i(S)+\hh_i(T)$ for any $i\ge 1$.
As $R$ is stretched, so are both $S$ and $T$.
Since $\edim S$ and $\edim T$ are less than $\edim R$, the induction hypothesis yields $\r(R)=\r(S)+\r(T)\le \edim S+\edim T=\edim R$.

(2) Set $\overline R=R/\soc R$.
We have $\ell(\overline R)=\ell(R)-\r(R)\le 5$.
In view of (1), we may assume $\soc R \subseteq \m^2$.
As $\edim R=3$, the Hilbert function of $\overline R$ is either $(1,3)$ or $(1,3,1)$.
Hence $\overline R$ is stretched.
By Cohen's structure theorem, there exist a regular local ring $(S,\n,k)$ of dimension three and an ideal $I$ of $S$ such that $R\cong S/I$.
Then $\overline R\cong S/(I:\n)$.
Since $R$ is not a complete intersection, we must have $\nu(I)>3$.
By (1) we get $\r(\overline R)\le3$.
Lemma \ref{l25}(3) shows $\nu(I:\n)\le\binom{3+1}{2}=6\le\r(R)+3< \r(R)+\nu(I)$, and $R$ is Burch by Lemma \ref{l25}(2).
\end{proof}

The next lemma gives an inequality of a Burch index and sufficient conditions for the equality to hold.

\begin{lem}\label{l217}
Let $(R,\m,k)$ be a local ring of depth zero.
Assume that $R$ is not a field.
The following hold.
\begin{enumerate}[\rm(1)]
\item
There is an inequality $\Burch(R)\le \edim R$.
\item
Suppose that there exists a regular local ring ring $(S,\n)$ such that $R\cong S/I$.
Then one has the equality $\Burch(R)=\edim R$, provided that the ideal $I$ satisfies one of the following conditions.
\begin{enumerate}[\rm(a)]
\item
There exists an ideal $J$ of $S$ such that $0\not=J\subseteq \n$ and $I=\n J$.
\item
There exists an ideal $J \subseteq \n^2$ such that $S/J$ is an artinian Gorenstein ring and $I=J:\n$.
\item
There exists an integer $s\ge2$ such that $I:\n\not\subseteq \n^s \supseteq I$.
\end{enumerate}
\end{enumerate}
\end{lem}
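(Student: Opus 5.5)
Throughout, write $\widehat R\cong S/I$ with $(S,\n)$ regular. We may take $I\subseteq\n^2$, since cutting down by a regular parameter changes neither side of the inequality. Then $\edim R=\dim S=\ell_S(\n/\n^2)$. Because $\Burch(R)=\ell_S(\n/(\n I:(I:\n)))$, the whole statement reduces to computing the colon ideal $\n I:(I:\n)$.

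\textbf{Part (1).} As noted in the definition, $\n^2\subseteq\n I:(I:\n)$. Hence $\Burch(R)\le\ell_S(\n/\n^2)=\dim S$. The only subtlety is that $\dim S=\edim R$ requires $I\subseteq\n^2$. Burch indices are independent of the presentation by \cite[Theorem 2.3]{DE}, so we may choose a minimal presentation and (1) follows.

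\textbf{Part (2).} Equality holds exactly when $\n I:(I:\n)=\n^2$, i.e.\ when $x(I:\n)\subseteq\n I$ forces $x\in\n^2$. In each case, fix $x\in\n\setminus\n^2$ and aim for a contradiction from $x(I:\n)\subseteq\n I$.

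\emph{Case (c).} Pick $y\in(I:\n)\setminus\n^s$. Since $S$ is regular, its associated graded ring is a polynomial ring, so initial forms multiply without cancellation. The initial form of $x$ has degree $1$ and that of $y$ has degree $<s$, so $xy\notin\n^{s+1}$. On the other hand $\n I\subseteq\n^{s+1}$ because $I\subseteq\n^s$. Hence $xy\notin\n I$, and therefore $x\notin\n I:(I:\n)$.

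\emph{Case (a).} Here $I=\n J$, so $J\subseteq I:\n$. It suffices to show $xJ\not\subseteq\n^2J$. Choose $s$ with $J\subseteq\n^s$ and $J\not\subseteq\n^{s+1}$, and take $y\in J\setminus\n^{s+1}$. Then $xy\notin\n^{s+2}\supseteq\n^2J$ by the same initial-form argument. Alternatively, apply (c) directly: $I=\n J\subseteq\n^{s+1}$ while $I:\n\supseteq J\not\subseteq\n^{s+1}$, so (c) holds with $s+1\ge2$.

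\emph{Case (b).} Here $S/J$ is artinian Gorenstein with $J\subseteq\n^2$, and $I=J:\n$. By Gorenstein duality, $I/J$ is the socle of $S/J$, and $I:\n=(J:\n):\n=J:\n^2$. Let $t$ be the socle degree of $S/J$, so $t\ge1$ and $I=J+\n^t$ locally. I would reduce to (c) as follows. If $t\ge2$, then $\n^{t-1}\subseteq J:\n^2=I:\n$. Since $\n^{t-1}\not\subseteq\n^t$ and $I$ is contained in roughly $\n^t$, condition (c) applies with $s=t$. The obstacle is that $J$ may contain elements of low order, so $I\not\subseteq\n^t$ in general. In that case I would instead argue directly by duality: $x(J:\n^2)\subseteq\n(J:\n)$ would give, on $S/J$, that multiplication by $x$ sends $0:\m^2$ into $\m(0:\m)=\m\soc$, which is zero or small. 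Matlis duality turns this into the statement $x\in\m^2+J$, i.e.\ $x\in\n^2$, a contradiction.

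I expect case (b) to be the main obstacle, specifically this Matlis-duality step, which needs $J\subseteq\n^2$. Dualizing the statement about $0:\m^2$ becomes a statement about $R/\m^2$, and that is where the hypothesis on $J$ enters.
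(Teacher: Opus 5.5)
Your proposal is correct, but it argues by a somewhat different route from the paper.

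\textbf{What the paper does.} The paper does not prove (1), (2a) or (2c); it cites Dao--Eisenbud for them. It proves only (2b), by an explicit element chase:
\begin{enumerate}
\item It uses the duality isomorphism $S/I\cong\Hom_{S/J}(\n/J,S/J)$, which sends $\overline 1$ to the inclusion $\theta$.
\item It takes $\alpha\colon\n/J\to k$ with $\alpha(\overline x)=1$; this needs $J\subseteq\n^2$.
\item It takes $\beta\colon k\to S/J$ hitting the socle generator $\overline\sigma$.
\item It lifts $\beta\alpha$ to $y\theta$. This produces $y\in I:\n$ with $\overline{xy}=\overline\sigma\ne0$, contradicting $xy\in\n I\subseteq J$.
\end{enumerate}

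\textbf{How your argument for (2b) compares.} You use the same duality, packaged as a standard fact. Put $A=S/J$. The image of $\n I=\n(J:\n)$ in $A$ is $\m_A\cdot\soc A$. This is exactly zero, not merely \emph{small}, and you should say so, because that is what makes the step work. Hence $x(I:\n)\subseteq\n I$ says that $\overline x$ annihilates $(J:\n^2)/J=0:_A\m_A^2$. The double-annihilator property of artinian Gorenstein rings then gives $\overline x\in 0:_A(0:_A\m_A^2)=\m_A^2$, that is, $x\in\n^2+J=\n^2$.

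This is shorter and more conceptual. The paper's chase is in effect a hand proof of this instance of the double-annihilator property; its advantage is that it exhibits the witness $y$ explicitly.

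\textbf{The other parts.} Your initial-form arguments in the polynomial ring $\mathrm{gr}_{\n}S$ make (1), (2a) and (2c) self-contained. Your observation that (a) is the instance $s+1$ of (c), with $s$ the order of $J$, is a genuine economy. You are also right that (b) does not reduce to (c). For $S=k[\![x,y]\!]$ and $J=(x^2,y^5)$ one gets $I=(x^2,y^5,xy^4)\not\subseteq\n^3$ and $I:\n=(x^2,xy^3,y^4)\subseteq\n^2$, so (c) fails for every $s\ge2$.

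\textbf{One point to tighten.} In part (2) the presentation $R\cong S/I$ is given, so you cannot take $I\subseteq\n^2$ for granted. You need it to identify $\ell(\n/\n^2)$ with $\edim R$. In (a) and (c) it is immediate. In (b) you should justify it, in either of two ways:
\begin{enumerate}
\item $I=J+\n^t$, where $t$ is the socle degree of $S/J$, and $t\ge2$ since $t\le1$ would force $R$ to be a field.
\item As the paper does: $I:\n\subseteq\n$ because $R$ is not a field, so $I(I:\n)\subseteq\n I$, i.e.\ $I\subseteq\n I:(I:\n)=\n^2$.
\end{enumerate}
The inclusion $\n^2+I\subseteq\n I:(I:\n)$ also gives (1) for an arbitrary presentation, namely $\Burch(R)\le\ell(\n/(\n^2+I))=\edim R$. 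So your unproved remark about cutting down by a regular parameter is unnecessary.
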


\begin{proof}
Assertions (1) and (2a) are due to \cite[Proposition 2.5]{DE}, while (2c) is a local analogue of \cite[Proposition 5.3]{DE}.
It remains to show (2b).
Since $S/J$ is an artinian Gorenstein ring, $\soc S/J=(\overline\sigma)\cong k$ for some $\sigma\in S$.
Also, taking the $S/J$-dual of the exact sequence $0\to\n/J\to S/J\to k\to0$, we get an isomorphism $S/I=(S/J)/\soc(S/J)\to\Hom_{S/J}(\n/J,S/J)$, which sends $\overline1\in S/I$ to the inclusion map $\theta:\n/J\to S/J$.

We claim that the equality $\n I:(I:\n)=\n^2$ holds.
Indeed, assume that there is an element $x\in\n I:(I:\n)$ with $x\notin\n^2$.
As $\overline x\in\n/J$ is part of a minimal system of generators, there is an epimorphism $\alpha:\n/J\to k$ with $\alpha(\overline x)=\overline1$.
There is a monomorphism $\beta:k\to S/J$ with $\beta(\overline1)=\overline\sigma$.
We find an element $y\in S$ with $\beta\alpha=y\theta$.
As $\overline\sigma\in\soc S/J$, we have $\n\beta=0$, so that $\n y\theta=\n\beta\alpha=0$.
We get $\n y\subseteq \ann_S(\theta)=I$, and $y\in I:\n$.
The choice of $x$ implies $xy\in\n I\subseteq J$.
We have $\overline\sigma=\beta\alpha(\overline x)=y\theta(\overline x)=\overline{xy}=\overline0$ in $S/J$, which contradicts the fact that $0\ne\soc S/J=(\overline\sigma)$.
The claim follows.

Since $I\ne\n$, we have $I:\n\subseteq\n$.
Hence $I(I:\n)\subseteq\n I$, which means $I\subseteq\n I:(I:\n)$.
The claim implies $I\subseteq\n^2$.
Using the claim again, we get $\edim R=\edim S=\ell(\n/\n^2)=\ell(\n/\n I:(I:\n))=\Burch(R)$.
\end{proof}

We present simple criteria for Burch rings and ones with Burch index two in the case of embedding dimension two.
The first assertion is a straightforward modification of \cite[Theorem 6.2]{burch}.

\begin{prop}\label{211}
Let $(R,\m,k)$ be a regular local ring of dimension two.
Let $I$ be an $\m$-primary ideal of $R$.
Take a minimal free resolution $0\to F_2\xrightarrow{\partial}F_1\to F_0\to R/I\to0$ of the $R$-module $R/I$.
\begin{enumerate}[\rm(1)]
\item
The following four coniditions are equivalent.
\begin{enumerate}[\rm(a)]
\item The local ring $R/I$ is a Burch ring.
\item There exist a regular system of parameters $x,y$ of $R$ and an integer $n\ge1$ such that $\I_1(\partial)=(x,y^n)$.
\item The ideal $\I_1(\partial)$ is not contained in $\m^2$.
\item There is an inequality $\r(R/(I:\m))<2\r(R/I)$.
\end{enumerate}
\item
Assume that $I\not=\m$. 
The equality $\Burch(R/I)=2$ holds if and only if the equality $\I_1(\partial)=\m$ holds.
\end{enumerate}
\end{prop}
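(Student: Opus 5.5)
We work throughout with the Hilbert--Burch description. Write $I$ with minimal resolution $0\to R^{n}\xrightarrow{\partial}R^{n+1}\to R\to R/I\to0$, where $n+1=\nu(I)$, and put $\bar R=R/I$. By Hilbert--Burch, $I$ is generated by the maximal minors of $\partial$; by minimality all entries of $\partial$ lie in $\m$. Since $I$ is $\m$-primary, $\depth\bar R=0$, so Lemma \ref{l25}(2) applies and makes (a) equivalent to $\nu(I:\m)<\r(\bar R)+\nu(I)$.

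\emph{Key tool for (a)$\Leftrightarrow$(d).} We first relate $\nu(I:\m)$ to the type of $R/(I:\m)$ and to $\I_1(\partial)$. The standard facts in dimension two are these.
\begin{itemize}
\item The type satisfies $\r(R/J)=\nu(J)-1$ for any $\m$-primary ideal $J$ (Hilbert--Burch: the last Betti number is $\nu(J)-1$ and equals the type). Hence $\r(\bar R)=n$.
\item Condition (a) therefore reads $\nu(I:\m)<2n+1$, that is, $\r(R/(I:\m))=\nu(I:\m)-1<2n=2\r(\bar R)$.
\end{itemize}
This is exactly (d), so (a)$\Leftrightarrow$(d) is immediate.

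\emph{(a)$\Leftrightarrow$(c)$\Leftrightarrow$(b).} We compute $(I:\m)/I\cong\Ext^2_R(k,\bar R)$ via the resolution. Dualizing gives $\Ext^2(k,\bar R)\cong \operatorname{Tor}$-type data, and the socle of $\bar R$ is identified with $\Hom(F_2^*\otimes k,\ldots)$. Concretely, $(I:\m)/I$ is spanned by the images of the $n$ Koszul-type elements: for each column $j$ of $\partial$ and a choice of $z\in\{x,y\}$ we write the column entries as $a_{ij}=xb_{ij}+yc_{ij}$. The socle generators of $\bar R$ are then represented by signed minors of $\partial$ in which one row is replaced by $(b_{ij})$ or $(c_{ij})$. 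This follows the proof of \cite[Theorem 6.2]{burch}, which we adapt.

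Write $\m I=\m\cdot(\text{minors})$ and $\m(I:\m)$. The Burch condition $\m(I:\m)\ne\m I$ holds precisely when some $x\cdot(\text{socle rep})$ or $y\cdot(\text{socle rep})$ fails to lie in $\m I$. Expanding, $x\cdot(\text{minor with row replaced by } b)$ differs from a combination of genuine minors times $\m$ by terms involving the $c$'s. This fails to vanish modulo $\m I$ exactly when some entry $a_{ij}$ has a unit coefficient $b_{ij}$ or $c_{ij}$, i.e.\ when $\I_1(\partial)\not\subseteq\m^2$. That gives (a)$\Leftrightarrow$(c).

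The step (c)$\Rightarrow$(b) goes as follows. An entry $a\in\I_1(\partial)\setminus\m^2$ can be taken as $x$ in a regular system of parameters $x,y$. Row and column operations then make $x$ an entry and clear other occurrences of $x$ modulo $\m^2$. The ideal $\I_1(\partial)$ is $\m$-primary (it contains $\sqrt I$-generating minors), so $\I_1(\partial)/(x)$ is an $\m$-primary ideal of the DVR $R/(x)$, hence equal to $(y^n)$. Thus $\I_1(\partial)=(x,y^n)$. The converse (b)$\Rightarrow$(c) is trivial.

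\emph{(2).} For $I\ne\m$ we use $\Burch(\bar R)=\ell(\m/(\m I:(I:\m)))\le 2$. We will show that the unit-coefficient analysis refines: an element $z\in\m\setminus\m^2$ lies in $\m I:(I:\m)$ iff $z\cdot(I:\m)\subseteq\m I$. The same minor expansion shows this holds iff the ``$z$-direction'' coefficients of all entries are in $\m$. Hence $\Burch=2$ iff both $x$- and $y$-directions occur with unit coefficients, i.e.\ $\I_1(\partial)+\m^2=\m$, i.e.\ $\I_1(\partial)=\m$ by Nakayama.

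\emph{Main obstacle.} The main obstacle is the precise minor-expansion computation identifying $\m(I:\m)$ modulo $\m I$. For this we would lean on \cite[Theorem 6.2]{burch} rather than redo it from scratch.
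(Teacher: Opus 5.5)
Your part (1) is fine and is essentially the paper's proof. (a)$\Leftrightarrow$(d) is the same count $\nu(J)=\r(R/J)+1$ combined with Lemma \ref{l25}(2). (c)$\Rightarrow$(b) is the same discrete valuation ring argument; your row and column operations are not needed. Since you ultimately defer the minor expansion to \cite[Theorem 6.2]{burch}, which gives (a)$\Leftrightarrow$(b), the equivalences close exactly as in the paper.

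The genuine gap is in (2). \cite[Theorem 6.2]{burch} only characterizes whether $R/I$ is Burch, i.e.\ whether $\m I:(I:\m)\ne\m$. It says nothing about whether $\m I:(I:\m)=\m^2$. So the ``$z$-direction'' criterion on which your whole argument for (2) rests is neither proved nor available from the source you intend to lean on. Moreover, your concluding step is wrong as written. Testing only $z=x$ and $z=y$ shows that some entry of $\partial$ has a unit $x$-coefficient and some entry has a unit $y$-coefficient, and this is strictly weaker than $\I_1(\partial)+\m^2=\m$. Take $I=(x+y,y^2)^2$, which is Example \ref{ex34} written in the regular system of parameters $x+y,y$. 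All entries of $\partial$ have image in the line $k\,\overline{x+y}\subseteq\m/\m^2$, so both coefficients occur as units. Yet $x+y\in\m I:(I:\m)$ and $\Burch(R/I)=1$. You have to test every $z\in\m\setminus\m^2$.

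Your idea can be completed as follows.
\begin{enumerate}
\item Write $\partial e_j=xB_j+yC_j$ with $B_j,C_j\in F_1$. Let $\Delta\colon F_1\to R$ be the map in the resolution; its entries minimally generate $I$.
\item Since $\Delta\partial=0$ and $x,y$ is $R$-regular, there are $s_j\in R$ with $\Delta(B_j)=ys_j$ and $\Delta(C_j)=-xs_j$.
\item Then $I:\m=I+(s_1,\dots,s_n)$. Indeed, if $xu=\Delta(P)$ and $yu=\Delta(Q)$, then $yP-xQ=\partial(\sum_jv_je_j)$ for some $v_j\in R$. Hence $P-\sum_jv_jC_j=xW$ for some $W\in F_1$, whence $u=\Delta(W)-\sum_jv_js_j$.
\item For $z=\alpha x+\beta y$ one has $zs_j=\Delta(\beta B_j-\alpha C_j)$. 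Since $\Delta$ induces an isomorphism $F_1/\m F_1\cong I/\m I$, this lies in $\m I$ if and only if $\beta B_j-\alpha C_j\in\m F_1$.
\item For $z\notin\m^2$ this says precisely that every entry of $\partial$ has image in $\m/\m^2$ on the line $k\overline z$.
\end{enumerate}
Note also two errors in your sketch: $(I:\m)/I\cong\Hom_R(k,R/I)\cong\Tor_2^R(k,R/I)$, not $\Ext^2_R(k,R/I)$, which is $\cong k$; and its generators are these $s_j$, not minors of $\partial$ with a row replaced.

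Consequently $\Burch(R/I)=\dim_k(\I_1(\partial)+\m^2)/\m^2$ whenever $I\ne\m$. This yields (1)(a)$\Leftrightarrow$(c) and (2) at once, by elementary means and without \cite[Theorem 6.2]{burch}. The paper argues differently. It uses \cite[Proposition 3.2]{DE}, which characterizes $x\notin\m I:(I:\m)$ (for $x\in\m\setminus\m^2$) by $k$ being a direct summand of $I/xI$. It then reads this off from the presentation $\partial\otimes_RR/(x)$ of $I/xI$ over the discrete valuation ring $R/(x)$.
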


\begin{proof}
(1) By \cite[Theorem 6.2]{burch} conditions (a) and (b) are equivalent, while clearly, (b) implies (c). 
Assume that $J:=\I_1(\partial)$ is not contained in $\m^2$.
We can choose an element $x\in J$ which is not in $\m^2$, and an element $y\in\m$ such that $x,y$ is a system of generators of $\m$.
As $R/(x)$ is a discrete valuation ring with a uniformizer $\overline y$, the ideal $J/(x)$ of $R/(x)$ is generated by $\overline y^n$ for some $n>0$.
Hence $J=(x,y^n)$.
Thus (c) implies (b).

We will be done if we show that (a) and (d) are equivalent.
We have $\nu(I)=\rank F_1=\rank F_2+\rank F_0=\r(R/I)+1$.
Similarly, we have $\nu(I:\m)=\r(R/(I:\m))+1$ (note that this holds even if $I=\m$).
Lemma \ref{l25} shows that the Burchness of the local ring $R/I$ is equivalent to the inequality $\nu(I:\m)<\r(R/I)+\nu(I)$, which is equivalent to the inequality $\r(R/(I:\m))<2\r(R/I)$.

(2) To show the ``only if'' part, suppose $\Burch(R/I)=2$.
By (1) and Remark \ref{r24}(5), there exist a regular system of parameters $x,y$ of $R$ and an integer $n>0$ such that $\I_1(\partial)=(x,y^n)$.
As $\ell_R(\m/\m^2)=2$, we have $\m I:(I:\m)=\m^2$. Hence $x$ is not in $\m I:(I:\m)$.
By \cite[Proposition 3.2]{DE}, the $R$-module $k$ is a direct summand of $I/xI$.
Tensoring $R/(x)$ with $F_\bullet$ gives a minimal free presentation $0 \to F_2/xF_2 \xrightarrow{\overline{\partial}} F_1/xF_1 \to I/xI \to 0$ of the $R/(x)$-module $I/xI$.
Thus $\I_1(\overline{\partial})=\I_1(I/xI)\supseteq \I_1(k)=\m/(x)$.
We now get $\m=\I_1(\partial)+(x)=\I_1(\partial)$.

To show the ``if'' part, assume that $\m^2\subsetneq\m I:(I:\m)$.
Then there exists an element $x\in \m I:(I:\m)\subseteq\m$ such that $x$ is not in $\m^2$.
Tensoring $R/(x)$ with $F_\bullet$, we see that $\overline{\partial}$ gives a minimal free presentation of $I/xI$.
As $\I_1(\partial)=\m$, we have $\I_1(\overline\partial)=\m/(x)$.
Since $R/(x)$ is a discrete valuation ring, there exists an element $y\in\m$ such that $\m/(x)=(\overline y)$.
Thus we have elementary transformations $\overline{\partial} \cong
\left(\begin{smallmatrix}
\overline y & *\\
* & *
\end{smallmatrix}\right)
\cong
\left(\begin{smallmatrix}
\overline y & 0\\
0 & *
\end{smallmatrix}\right)$ of matrices over $R/(x)$.
It follows that $k$ is a direct summand of $I/xI$.
Using \cite[Proposition 3.2]{DE}, we have $x\not\in \m I:(I:\m)$.
This contradicts the choice of $x$, and we obtain $\m^2=\m I:(I:\m)$.
Therefore, $\Burch(R/I)=\ell(\m/\m^2)=2$.
\end{proof}

Examples of local rings with Burch index one are easily found by using Proposition \ref{211}.

\begin{ex}\label{ex34}
Consider the quotient $R=k[x,y]/(x^2,xy^2,y^4)$ of a polynomial ring over a field $k$.
Then $R$ is an artinian non-Gorenstein Burch ring with $\ell(R)=6$, $\r(R)=2$ and $\Burch(R)=1$ by \cite[Example 4.5]{DE}.

More generally than this, let $S$ be a regular local ring of dimension two with a regular system of parameters $x,y$.
Let $r\ge1$ be an integer.
Put $I_r=(x,y^2)^r$ and $R=S/I_r$.
Note that $I_2=(x^2,xy^2,y^4)$.
The ideal $I_r$ of $S$ is minimally generated by the $r$-minors of the $(r+1)\times r$ matrix $\left(\begin{smallmatrix}
y^2&0&\cdots&0\\
x&y^2&\cdots&0\\
0&x&\cdots&0\\
\cdots&\cdots&\cdots&\cdots\\
0&0&\cdots&y^2\\
0&0&\cdots&x
\end{smallmatrix}\right)$.
The Hilbert--Burch theorem \cite[Theorem 1.4.17]{BH} ensures $\r(R)=r$.
It follows from \cite[Corollary 6.5]{burch} that $R$ is Burch.
Using Remark \ref{r24}(5) and Lemma \ref{l217}(1), we have $\Burch(R)\in\{1,2\}$.
Applying Proposition \ref{211}(2), we obtain $\Burch(R)=1$.
As $I_1$ is a complete intersection, it is easy to verify that $\ell(R)=\ell(S/I_r)=\ell(S/I_1)\binom{r+1}{2}=r(r+1)$.
Thus we have $\ell(R)\not<\r(R)(\r(R)+1)$ and $\ell(R)<2\r(R)(\r(R)+1)$.
\end{ex}

The above example can be extended to the following theorem.

\begin{thm} \label{t212}
Let $(R,\m,k)$ be an artinian local ring with $\edim R=2$.
Then the following assertions hold.
\begin{enumerate}[\rm (1)]
\item If there is an equality $\ell(R)< 2\r(R)(\r(R)+1)$, then one has $\Burch(R)\ge 1$, i.e., the local ring $R$ is Burch.
\item If the inequality $\ell(R)< \r(R)(\r(R)+1)$ holds, then there is an equality $\Burch(R)=2$.
\end{enumerate}
\end{thm}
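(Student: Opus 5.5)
Write $\widehat R\cong S/I$ with $(S,\n)$ a two-dimensional regular local ring and $I\subseteq\n^2$ an $\n$-primary ideal. Set $r=\r(R)$. By Hilbert--Burch, $I$ has a minimal free resolution $0\to S^r\xrightarrow{\partial}S^{r+1}\to S\to S/I\to0$. Moreover $I$ is generated by the $r$-minors of the $(r+1)\times r$ matrix $\partial$, all of whose entries lie in $\n$.

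The plan is to argue by contraposition in both parts, using the criteria of Proposition \ref{211}. If $R$ is not Burch, then $\I_1(\partial)\subseteq\n^2$ by Proposition \ref{211}(1). If $\Burch(R)\ne2$, then $\I_1(\partial)\ne\n$ by Proposition \ref{211}(2); by Proposition \ref{211}(1)(b) this means either $\I_1(\partial)\subseteq\n^2$ or $\I_1(\partial)=(x,y^n)$ with $n\ge2$. In both situations $\I_1(\partial)\subseteq J$ for an $\n$-primary complete intersection $J$: take $J=\n^2$ in the first case, and $J=(x,y^2)$ in the second. The two relevant values are $\ell(S/\n^2)=3$, giving the bound $2r(r+1)\cdot\frac{3}{3}$-type estimate, and $\ell(S/(x,y^2))=2$.

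The key step is the following length inequality: if all entries of $\partial$ lie in an $\n$-primary ideal $J$, then $\ell(S/I)\ge\ell(S/J)\binom{r+1}{2}$ for the complete-intersection case. I would prove it via $I=\I_r(\partial)\subseteq J^r$, so that $\ell(S/I)\ge\ell(S/J^r)$. For $J=(x,y^2)$ a parameter ideal, $\ell(S/J^r)=2\binom{r+1}{2}=r(r+1)$, as already used in Example \ref{ex34}. For $J=\n^2$ we have $J^r=\n^{2r}$, and $\ell(S/\n^{2r})=\binom{2r+1}{2}=r(2r+1)$.

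This bound for $\n^2$ is not strong enough for part (1), since it only gives $r(2r+1)<2r(r+1)$. So for (1) I need the sharper inclusion $I\subseteq\n^{2r}$ together with a length count that accounts for the syzygy condition. The idea is to use the resolution directly. Since $\partial$ has entries in $\n^2$, the additivity of Hilbert-type lengths along $0\to S^r\to S^{r+1}\to I\to0$ should yield $\ell(S/I)\ge 2r(r+1)$. Concretely, I would take the minors of degrees $a_i$ (orders) and the Hilbert--Burch degree matrix, and compare with the graded case: for a homogeneous ideal with degree matrix entries $u_{ij}\ge2$, the colength equals $\sum_{i\le j}u_{i}v_{j}$-type formulas. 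Then I would reduce the local case to the associated graded ring via the initial-form degeneration (the orders of entries only increase colength, by semicontinuity of length under passing to $\mathrm{in}(I)$, where $\ell(S/I)=\ell(\mathrm{gr}(S)/I^*)$). In the standard graded case, the colength formula $\ell=\sum_{i\le j} a_i b_j$, with the $a_i,b_j\ge1$ sums of the degree matrix's diagonal decomposition, gives at least $\binom{r+1}{2}\cdot 4=2r(r+1)$ when all entries have degree $\ge2$. The analogous count with degrees $1$ and $2$ gives at least $r(r+1)$ in case (2).

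The main obstacle is establishing this colength lower bound in terms of the orders of the Hilbert--Burch matrix entries in the non-graded local setting. I would try to pass to $\mathrm{gr}_\n S$ using a standard basis (tangent cone) argument, checking that the initial ideal still has a Hilbert--Burch matrix whose entries have order at least that of the original.

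For (2), an alternative I would try first is to use only the complete-intersection bound together with the case split, checking $\I_1\subseteq\n^2$ against $\I_1\subseteq(x,y^2)$ separately. Both cases give $\ell(R)\ge r(r+1)$, because $\ell(S/\n^{2r})\ge r(r+1)$ as well. This settles (2) cleanly via $I\subseteq J^r$.
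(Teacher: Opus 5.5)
Your argument for (2) is correct and is essentially the paper's. If $\Burch(R)\ne2$, then $\I_1(\partial)\ne\n$ by Proposition~\ref{211}(2). So $\I_1(\partial)\subseteq(x,y^2)$ for a suitable regular system of parameters; in the non-Burch case this holds simply because $\n^2\subseteq(x,y^2)$. Hence $I\subseteq(x,y^2)^r$ and $\ell(R)\ge r(r+1)$. Unlike the paper, you do not invoke (1) to get Burchness first, which is harmless. Incidentally, $\n^2$ is not a complete intersection, but you never need that.

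Part (1), however, is not proved. You correctly note that $I\subseteq\n^{2r}$ only gives $\ell(R)\ge r(2r+1)$, which is short of $2r(r+1)$ by exactly $r$. You then defer the missing estimate to a degeneration to $\mathrm{gr}_\n S\cong k[X,Y]$, which you leave unverified. That step fails as stated. The ideal of initial forms has the same colength, but not the same number of generators, the same type, or the same orders of Hilbert--Burch entries. For example, $I=(x^2,xy+y^3)$ has $r=1$ and a Hilbert--Burch matrix with entries in $\n^2$. Its initial ideal is $(X^2,XY,Y^5)$, which has three generators and Hilbert--Burch matrix $\left(\begin{smallmatrix}Y&0\\-X&Y^4\\0&-X\end{smallmatrix}\right)$ with linear entries. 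So the graded colength formula cannot be applied with the degree bounds you need.

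The missing idea is an elementary count in degree $2r$, using two facts you already have: $\nu(I)=r+1$ and $\I_1(\partial)\subseteq\n^2$. The latter gives not only $I\subseteq\n^{2r}$ but also $\n I\subseteq\n\,\I_1(\partial)^r\subseteq\n^{2r+1}$. Hence the image of $I$ in the $(2r+1)$-dimensional space $\n^{2r}/\n^{2r+1}$ is a quotient of $I/\n I$. It therefore has dimension at most $r+1$, so $\ell(\n^{2r}/(I+\n^{2r+1}))\ge r$. Consequently
\[
\ell(S/I)\ge\ell(S/\n^{2r})+\ell(\n^{2r}/(I+\n^{2r+1}))\ge r(2r+1)+r=2r(r+1).
\]
This is exactly the paper's proof of (1), and no passage to the associated graded ring is needed.
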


\begin{proof}
Cohen's structure theorem guarantees that $R$ is isomorphic to the quotient $S/I$ of some regular local ring $(S,\n,k)$ of dimension two by an ideal $I\subseteq \n^2$.
Put $r=\r(R)$.
The Hilbert--Burch theorem implies that the $S$-module $S/I$ has a minimal free resolution $0 \to S^{\oplus r} \xrightarrow{\varphi} S^{\oplus(r+1)} \to S \to S/I \to 0$, and that $I=\I_r(\varphi)$.

(1) Remark \ref{r24}(5) says that $R$ is Burch if $\Burch(R)\ge1$.
Now, assume that $R$ is non-Burch.
Proposition \ref{211}(1) says that $\I_1(\varphi)$ is contained in $\n^2$.
Hence $\n I=\n\I_r(\varphi)\subseteq\n\I_1(\varphi)^r\subseteq \n^{2r+1}$, so $\n I\subseteq I\cap\n^{2r+1}$.
We have
$$
\begin{array}{l}
r+1
=\ell(I/\n I)
\ge\ell(I/I\cap\n^{2r+1})
=\ell(I+\n^{2r+1}/\n^{2r+1})
=\ell(\n^{2r}/\n^{2r+1})-\ell(\n^{2r}/I+\n^{2r+1})\\
\phantom{r+1=\ell(I/\n I)}
\ge\ell(\n^{2r}/\n^{2r+1})-\ell(\n^{2r}/I)
=2r+1+\ell(S/\n^{2r})-\ell(S/I)
=2r+1+\binom{2r+1}{2}-\ell(S/I),
\end{array}
$$
and it follows that $\ell(S/I)\ge\frac{2r+1}{2}+r=2r(r+1)$.
Thus the proof of the assertion is completed.

(2) By (1), Remark \ref{r24}(5) and Lemma \ref{l217}(1), we have $\Burch(R)\le2$.
Assume $\Burch(R)\le 1$.
Proposition \ref{211}(2) implies that $\I_1(\varphi)\subseteq (x,y^2)$ for some minimal generators $x,y$ of $\n$.
Then $I=\I_r(\varphi) \subseteq \I_1(\varphi)^r\subseteq (x,y^2)^r$.
Using what we saw in Example \ref{ex34}, we obtain $\ell(S/I) \ge \ell(S/(x,y^2)^r)=r(r+1)$.
\end{proof}

The two statements of the following example say respectively that the assumption of (1) in Theorem \ref{t212} cannot be weakened, and that each of the converses of (1) and (2) in Theorem \ref{t212} can fail.

\begin{ex}
Let $S$ be a regular local ring of dimension two.
Take a regular system of parameters $x,y$ of $S$.
Let $r$ be a positive integer.
\begin{enumerate}[(1)]
\item
Put $I_r=(x^2,y^2)^r$ and $R=S/I_r$.
The ideal $I_r$ of $S$ is minimally generated by the $r$-minors of the $(r+1)\times r$ matrix $\left(\begin{smallmatrix}
y^2&0&\cdots&0\\
x^2&y^2&\cdots&0\\
0&x^2&\cdots&0\\
\cdots&\cdots&\cdots&\cdots\\
0&0&\cdots&y^2\\
0&0&\cdots&x^2
\end{smallmatrix}\right)$. 
By an observation similar to the one in Example \ref{ex34}, we obtain that  $\r(R)=r$ and $\ell(R)=2r(r+1)$.
Also, it follows from \cite[Corollary 6.5]{burch} that $R$ is not Burch.
\item
Put $I_r=(x^r,xy,y^r)$ and $R=S/I_r$.
The ideal $I_r$ of $S$ is minimally generated by the $2$-minors of the $3\times 2$ matrix $\left(\begin{smallmatrix}
y & 0\\
x^{r-1} & y^{r-1}\\
0 & x
\end{smallmatrix}\right)$.
Proposition \ref{211} shows that $R$ is Burch with $\Burch(R)=2$.
By a direct computation, we get $\r(R)=2$ and $\ell(R)=2r-1$.
Hence, for $r\ge 7$, we have $\ell(R)\ge 12=2\r(R)(\r(R)+1)$.
\end{enumerate}
\end{ex}

As an application of the above theorem, we obtain the corollary below, which yields numerical sufficient conditions for a given artinian non-Gorenstein local ring to be a Burch ring.

\begin{cor}\label{18}
\begin{enumerate}[\rm(1)]
\item
Every non-Gorenstein local ring $R$ with $\edim R=2$ and $\ell(R)\le 11$ is a Burch ring.
\item
Let $R$ be non-Gorenstein of $\ell(R)\le 5$.
Then $\Burch(R)=\edim R$.
In particular, $R$ is a Burch ring.
\end{enumerate}
\end{cor}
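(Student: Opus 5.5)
The plan is to dispose of (1) by a direct numerical comparison with Theorem \ref{t212}(1), and to prove (2) by a case analysis on the Hilbert function, using Theorem \ref{t212}(2) in embedding dimension two and Lemma \ref{l217}(2c) in the stretched case.

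For (1), note that $\ell(R)<\infty$ forces $R$ to be artinian. Since $R$ is not Gorenstein, $\r(R)\ge2$, and so $2\r(R)(\r(R)+1)\ge12>11\ge\ell(R)$. Theorem \ref{t212}(1) then gives that $R$ is Burch.

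For (2), $R$ is again artinian. A field, and an artinian ring with $\edim R=1$ (a quotient of a discrete valuation ring), are Gorenstein. Hence $e:=\edim R\ge2$. By Cohen's structure theorem we write $\widehat R\cong S/I$ with $(S,\n)$ regular of dimension $e$ and $I\subseteq\n^2$. Since $\sum_i h_i=\ell(R)\le5$ with $h_0=1$ and $h_1=e\ge2$, the Hilbert function of $R$ is one of
\[
(1,2),\ (1,2,1),\ (1,2,2),\ (1,2,1,1),\ (1,3),\ (1,3,1),\ (1,4).
\]
We treat two cases.

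First, suppose $e=2$. Then $\r(R)\ge2$ gives $\r(R)(\r(R)+1)\ge6>5\ge\ell(R)$, so Theorem \ref{t212}(2) yields $\Burch(R)=2=\edim R$. This covers in particular the only non-stretched Hilbert function $(1,2,2)$.

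Second, suppose $e\ge3$. In every remaining case $h_2\le1$, so $R$ is stretched and non-Gorenstein, and Proposition \ref{p210}(1), implication (b)$\Leftarrow$(c), gives $\soc R\not\subseteq\m^2$. Translated to $S$, this says $(I:\n)/I\not\subseteq(\n^2+I)/I$. Since $I\subseteq\n^2$, this means $I:\n\not\subseteq\n^2\supseteq I$, which is exactly hypothesis (2c) of Lemma \ref{l217} with $s=2$. Hence $\Burch(R)=\Burch(\widehat R)=\edim R$; the case $\m^2=0$ could alternatively be handled by Lemma \ref{l217}(2a) with $J=\n$. The last assertion then follows from Remark \ref{r24}(5), since $\edim R\ge1$.

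The only delicate points are making sure the list of Hilbert functions is exhaustive and justifying the passage to the completion. The latter is harmless: the Burch index is defined via $\widehat R$, and length, type and embedding dimension are preserved under completion.
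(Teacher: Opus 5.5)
Your proposal is correct and follows the paper's proof essentially step for step. Part (1) is the numerical comparison $\ell(R)\le 11<12\le 2\r(R)(\r(R)+1)$ feeding into Theorem \ref{t212}(1). Part (2) splits into $\edim R=2$, handled by Theorem \ref{t212}(2), and $\edim R\ge3$, where $R$ is stretched, so Proposition \ref{p210}(1) and Lemma \ref{l217}(2) give $\Burch(R)=\edim R$. Your only addition is to make explicit that the relevant case is (2c) with $s=2$, which the paper leaves implicit.
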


\begin{proof}
(1) Since $r(R)\ge 2$, we have $\ell(R)\le 11 <12\le 2r(R)(r(R)+1)$.
Theorem \ref{t212}(1) deduces the assertion.

(2) We first deal with the case $\edim R\le 2$.
Since $R$ is non-Gorenstein, we have $\edim R=2$ and $\r(R)\ge 2$.
As $\ell(R)\le5<6=2(2+1)\le\r(R)(\r(R)+1)$, it follows from Theorem \ref{t212}(2) that $\Burch(R)=2=\edim R$.

Now, let $\edim R\ge 3$.
Since $\ell(R)\le 5$, the possible Hilbert functions are $(1,3)$, $(1,3,1)$ and $(1,4)$.
It follows that $R$ is stretched.
Proposition \ref{p210}(1) and Lemma \ref{l217}(2) imply that $\Burch(R)=\edim R$.
\end{proof}

We have slightly more detailed observations by using Hilbert functions in the case of length at most $7$.

\begin{rem} \label{r336}
Let $(R,\m,k)$ be an artinian local ring which satisfies $\edim R\ge3$.
\begin{enumerate}[(1)]
\item
Suppose that $\ell(R)=6$.
Then the possible Hilbert functions are $(1,3,2)$, $(1,3,1,1)$, $(1,4,1)$ and $(1,5)$.
Except the case of $(1,3,2)$, the ring $R$ is stretched.
In this case, $R$ is either Gorenstein or Burch by Proposition \ref{p210}(1).
Examples of a non-Gorenstein non-Burch local ring with type $2$ and Hilbert function $(1,3,2)$ are presented in Example \ref{ex316} and Remark \ref{19}.
\item
Suppose that $\ell(R)=7$.
Then the Hilbert function of $R$ is one of the following.
$$
{\rm(a)}\ (1,3,3),\ \ 
{\rm(b)}\ (1,3,2,1),\ \ 
{\rm(c)}\ (1,3,1,1,1),\ \ 
{\rm(d)}\ (1,4,2),\ \ 
{\rm(e)}\ (1,4,1,1),\ \ 
{\rm(f)}\ (1,5,1),\ \ 
{\rm(g)}\ (1,6).
$$
Let us consider the Gorenstein and Burch properties of $R$ in each of these cases.
\begin{itemize}
\item
Cases (c), (e), (f), (g):
The ring $R$ is stretched, so it is Burch or Gorenstein by Proposition \ref{p210}(1).
\item
Case (a):
As $\soc R\supseteq\m^2$, we get $\r(R)\ge \ell(\m^2)=\edim R=3$.
Proposition \ref{p210}(2) shows $R$ is Burch.
\item
Case (d):
One does not necessarily have $R$ is Gorenstein or Burch.
In fact, the artinian local ring
\[
R=k[x,y,z,w]/(x^2,y^2,xz,xw,yz,yw,z^2,w^2)\cong k[x,y]/(x^2,y^2)\times_k k[z,w]/(z^2,w^2)
\]
has Hilbert function $(1,4,2)$, and is neither Gorenstein nor Burch by \cite[Propositions 5.1 and 6.15]{burch}.
By the way, the local ring $R$ is still uniformly dominant; see Proposition \ref{t220} stated later.
\item
Case (b):
The local ring $R$ can be neither Gorenstein nor Burch; see Example \ref{ex316} stated later.
\end{itemize}
\end{enumerate}
\end{rem}

\section{Exact pairs of zero-divisors}

In this section, as a continuation of the discussion made in the previous section, our main focus is on artinian local rings of length six.
We start by recalling the definition of an exact pair of zero-divisors.

\begin{dfn}
A pair $(x,y)$ of elements of $\m$ is called an \emph{exact pair of zero-divisors} of $R$ if the equalities $0:_Rx=yR$ and $0:_Ry=xR$ hold.
\end{dfn}

The following technical lemma is necessary in the proof of the main result of this section.

\begin{lem} \label{l219}
Let $R$ be an artinian local ring with maximal ideal $\m$ and residue field $k$.
\begin{enumerate}[\rm(1)]
\item
Suppose that $R$ has Hilbert function $(1,n,n-1)$ with $n\ge2$.
Let $x,y$ be elements of $\m\setminus \m^2$.
Then $(x,y)$ is an exact pair of zero-divisors of $R$ if and only if $xy=0$ and $x\m=y\m=\m^2$.
\item
If $\hh_2(R)\le 2$, then there exists an element $v\in \m$ such that $v\m=\m^2$.
\end{enumerate}
\end{lem}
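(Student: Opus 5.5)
For (1), I will work directly with lengths, using $\ell(R)=1+n+(n-1)=2n$, $\m^3=0$ and $\m^2\subseteq\soc R$. Take $x\in\m\setminus\m^2$; since $x\m\subseteq\m^2$, the ideal $xR=kx+x\m$ has length $1+\ell(x\m)$. From the exact sequence $0\to 0:_Rx\to R\xrightarrow{x} xR\to0$ I get $\ell(0:_Rx)=2n-1-\ell(x\m)$.

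For the ``if'' direction, assume $xy=0$ and $x\m=y\m=\m^2$. Then $\ell(xR)=\ell(yR)=n$, so $\ell(0:_Rx)=2n-1-(n-1)=n$. Now $xy=0$ gives $yR\subseteq 0:_Rx$, and both sides have length $n$, so they are equal. By symmetry, $0:_Ry=xR$.

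For the ``only if'' direction, assume $0:x=yR$ and $0:y=xR$. Clearly $xy=0$. Write $a=\ell(x\m)$ and $b=\ell(y\m)$, both at most $n-1$. Then $\ell(yR)=1+b$ and $\ell(0:x)=2n-1-a$, so $1+b=2n-1-a$, that is, $a+b=2n-2$. This forces $a=b=n-1$, so $x\m=\m^2=y\m$.

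For (2), let $h_2=\hh_2(R)\le2$. If $h_2=0$, then $\m^2=0$ and $v=0$ works. Otherwise, set $V=\m/\m^2$ and $W=\m^2/\m^3$. For each $v\in\m$ the map $\mu_v\colon V\to W$ given by multiplication by $v$ is defined. If I find $v$ with $v\m+\m^3=\m^2$, then $\m^2=v\m+\m\cdot\m^2$, and Nakayama's lemma (applied to the f.g. module $\m^2$ and submodule $v\m$) gives $v\m=\m^2$. So the problem reduces to making $\mu_v$ surjective onto $W$.

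To do this I pass to the associated graded ring $G=\mathrm{gr}_\m R$. The degree-one multiplication $G_1\times G_1\to G_2$ is surjective and commutative. If $h_2=1$, choose $u,w\in G_1$ with $uw\neq0$; then $\mu_u$ or $\mu_w$ is nonzero, hence surjective onto the one-dimensional $G_2$. If $h_2=2$, I need some $v\in G_1$ whose multiplication map $G_1\to G_2$ has rank 2. This is the main obstacle.

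I would argue by contradiction: suppose every $\mu_v$ has rank at most 1. If the image lines $\operatorname{Im}\mu_v$ vary with $v$ on a dense set, the linear-algebra lemma for spaces of matrices of rank at most 1 says the maps either all share a common one-dimensional image, or all share a common kernel of codimension 1. A common one-dimensional image contradicts the fact that the images span $G_2$, which has dimension 2. A common kernel $K$ of codimension 1 contains $v$ by commutativity (via $\mu_v(w)=\mu_w(v)$), which forces $G_1^2=\mu_{v_0}(G_1)$ for a complement vector $v_0$, hence $\dim G_2\le1$, again a contradiction.

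Over a small residue field I will instead pick $u,w$ with $uw$ and some $uw'$ independent, and consider $u+\lambda w$. If that fails, the fallback is a direct check of the few possible multiplication tables.
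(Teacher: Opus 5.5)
Your proposal is correct, and in both parts it takes a different route from the paper. For (1), the paper cites \cite[Lemma 4.3]{CJRSW} for the ``if'' direction. It proves ``only if'' via $\m^2\subseteq 0:\m\subseteq 0:y=(x)$, hence $\m^2\subseteq x(\m^2:x)\subseteq x\m$, and symmetrically for $y$. Your length count $\ell(0:x)=2n-1-\ell(x\m)$ handles both directions without any citation. In the ``only if'' direction it even extracts both $x\m=\m^2$ and $y\m=\m^2$ from the single equality $0:x=yR$. For (2), the paper does an explicit case analysis. It picks $x,y,z,w\in\m\setminus\m^2$ with $\overline{xy},\overline{zw}$ a basis of $\m^2/\m^3$, shows that one of $x,w,z,y$ works unless $\overline{xw}=\overline{zy}=0$, and then takes $v=y+w$. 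That last step is exactly the mechanism ``two rank-one maps with different images and different kernels sum to a rank-two map,'' so your argument is the conceptual version of the paper's. The paper's version is elementary and visibly independent of $k$. Yours explains the result through the structure of linear spaces of rank-one maps plus commutativity.

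You should, however, remove the hedging, because the rank-one dichotomy needs no density hypothesis and no assumption on $k$. Let $\mathcal L\subseteq\Hom_k(V,W)$ be a linear subspace of maps of rank at most $1$. If $f,g\in\mathcal L$ have different image lines and different kernels, choose $v_1\in\ker g\setminus\ker f$ and $v_2\in\ker f\setminus\ker g$; then $f+g$ has rank $2$. Fixing a nonzero $f$, every nonzero $g$ therefore shares its image or its kernel with $f$. If some $g$ shared only the kernel and some $h$ only the image, then $g+h$ would have rank $2$. Since $v\mapsto\mu_v$ is $k$-linear, $\{\mu_v\}$ is such a subspace, so your contradiction argument already covers finite residue fields. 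The ``small residue field'' fallback should be deleted, since as written it is not an argument. If $uw$ and $uw'$ are independent, then $\mu_u$ is already surjective, and ``a direct check of the multiplication tables'' proves nothing. Also phrase the common-kernel case cleanly: $\mu_v(K)=0$ for all $v$ gives $\mu_w=0$ for $w\in K$ by commutativity, so $G_2=G_1G_1=k\,v_0^2$ has dimension at most $1$.
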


\begin{proof}
(1) The ``if'' part of the equivalence follows from \cite[Lemma 4.3]{CJRSW}.
We show the ``only if'' part.
Assume that $(x,y)$ is an exact pair of zero-divisors.
As $\m^3=0$, we have that $(x)=0:y\supseteq0:\m\supseteq\m^2$.
Therefore, $\m^2=(x)\cap\m^2\subseteq x(\m^2:x)\subseteq x\m$, and the equality $\m^2=x\m$ follows.
Similarly, we get the equality $\m^2=y\m$.

(2) If $\hh_2(R)=0$, then $\m^2=0=0\m$.
Consider the case $\hh_2(R)=1$.
The quotient $\m^2/\m^3$ is a $1$-dimensional $k$-vector space spanned by $\{\overline{xy}\mid x,y\in \m\setminus \m^2\}$.
Since every set of generators of a vector space $V$ contains a basis of $V$, there are elements $x,y\in \m\setminus \m^2$ such that $\overline{xy}$ is a basis of $\m^2/\m^3$.
Nakayama's lemma shows $\m^2=(xy)=x\m$.
Finally, we consider the case $\hh_2(R)=2$.
Then $\m^2/\m^3$ is a $2$-dimensional $k$-vector space spanned by $\{\overline{xy}\mid x,y\in \m\setminus \m^2\}$.
There are elements $x,y,z,w\in \m\setminus \m^2$ such that $\overline{xy}, \overline{zw}$ is a basis of $\m^2/\m^3$.
If $\overline{xw}, \overline{xy}\in\m^2/\m^3$ is a basis of $\m^2/\m^3$, then $\m^2=(xw,xy)=x\m$ by Nakayama's lemma.
If $\overline{xw}=u\overline{xy}$ for some $u\in k^\times$, then $\overline{xw},\overline{zw}$ is a basis of $\m^2/\m^3$, so that $\m^2=(xw,zw)=w\m$.
Thus we may assume $\overline{xw}=0$.
Similarly, we may assume $\overline{zy}=0$.
Then $\overline{(y+w)x},\overline{(y+w)z}$ is a basis of $\m^2/\m^3$, and we get $\m^2=(y+w)\m$.
\end{proof}

The following theorem is the main result of this section.
It describes the structure of a non-Gorenstein non-Burch local ring of length six.

\begin{thm} \label{p311}
Let $(S,\n,k)$ be a regular local ring, and let $I$ be an $\n$-primary ideal of $S$ such that $I\subseteq \n^2$ and $\ell(S/I)=6$.
Assume that the local ring $S/I$ is neither Gorenstein nor Burch.
Then one has $\edim S=3$, $\r(S/I)=2$, $\nu(I)=4$, and there exists an exact pair of zero-divisors in $S/I$.
\end{thm}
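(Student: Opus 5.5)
The plan is to pin down the numerical invariants first, using the earlier criteria for Burchness, and only then construct the exact pair of zero-divisors. Put $R=S/I$ and $\m=\n/I$.

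\emph{Step 1: $\edim S=3$ and the Hilbert function.} If $\edim R\le 2$, then $\edim R=2$ because $R$ is not Gorenstein. In that case $\ell(R)=6\le 11$, so Corollary \ref{18}(1) makes $R$ Burch, a contradiction. Hence $\edim R\ge 3$. By Remark \ref{r336}(1), every Hilbert function of length $6$ other than $(1,3,2)$ is stretched, and then $R$ is Gorenstein or Burch by Proposition \ref{p210}(1). Therefore $R$ has Hilbert function $(1,3,2)$. Since $I\subseteq\n^2$, this gives $\edim S=\edim R=3$.

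\emph{Step 2: $\r(R)=2$ and $\nu(I)=4$.} Since $\m^3=0$, we have $\m^2\subseteq\soc R$. If $\soc R\not\subseteq\m^2$, then Proposition \ref{p210}(1) ((b)$\Rightarrow$(a)) makes $R$ Burch. So $\soc R=\m^2$, which gives $\r(R)=2$. Moreover $R/\soc R$ has Hilbert function $(1,3)$, so $I:\n=\n^2$ and $\nu(I:\n)=6$. As $R/I$ has depth zero and $R$ is non-Burch, Lemma \ref{l25}(2)(d) gives $6\ge\r(R)+\nu(I)=2+\nu(I)$, hence $\nu(I)\le4$. On the other hand, $\n I\subseteq\n^3$, so $\nu(I)\ge\dim_k(I+\n^3)/\n^3=\dim_k\n^2/\n^3-\hh_2(R)=6-2=4$. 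Therefore $\nu(I)=4$.

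\emph{Step 3: the exact pair.} Since the Hilbert function is $(1,n,n-1)$ with $n=3$, Lemma \ref{l219}(1) reduces the problem to finding $x,y\in\m\setminus\m^2$ with
\[
xy=0\quad\text{and}\quad x\m=y\m=\m^2.
\]
Because $\m^3=0$, multiplication induces a symmetric bilinear map $V\times V\to W$, where $V=\m/\m^2\cong k^3$ and $W=\m^2\cong k^2$. This amounts to a pencil of two symmetric bilinear forms $B_1,B_2$ on $V$, dual to the $4$-dimensional space of quadrics $(I+\n^3)/\n^3$. For $x\in V$ let $L_x\colon V\to W$ denote multiplication by $x$.

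Lemma \ref{l219}(2) provides $v$ with $L_v$ surjective, so $\ker L_v$ is spanned by a single vector $y$. Then $vy=0$ automatically, and $v\in\ker L_y$ by symmetry. What remains is to show that $L_y$ also has rank $2$, possibly after replacing $v$ by another element $x$ with $L_x$ surjective.

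\emph{Main obstacle.} The key step, which I expect to be hardest, is excluding the degenerate situation in which, for every $x$ with $L_x$ surjective, the kernel vector $y$ satisfies $\operatorname{rank}L_y\le1$. I would proceed by a case analysis on normal forms of the pencil $B_1,B_2$ in three variables, working over an arbitrary field $k$. In each degenerate case I would aim to exhibit a socle element outside $\m^2$, or a direct-sum (fiber-product) decomposition of $\m$, or a violation of the equality $\nu(I)=4$ via Lemma \ref{l25}. Each of these contradicts either the non-Burch hypothesis or Step 2, which would leave a pair $(x,y)$ with all the required properties.
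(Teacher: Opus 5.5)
Your Steps 1 and 2 are correct and essentially follow the paper. Your bound $\nu(I)\ge4$, obtained from $\n I\subseteq\n^3$ and $\dim_k\n^2/(I+\n^3)=2$, replaces the paper's appeal to ``$R$ is not a complete intersection'' equally well. Your reduction via Lemma \ref{l219}(1) to finding $x,y\in\m\setminus\m^2$ with $xy=0$ and $x\m=y\m=\m^2$ is also the paper's.

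The gap is that the existence of such a pair, which is the only non-routine part of the theorem, is never established. Your final paragraph says what you would try, not why it works. The proposed tool is not off-the-shelf either. A normal-form classification of pencils of ternary symmetric bilinear forms over an arbitrary field (non-closed, or of characteristic $2$, where symmetric bilinear and quadratic forms diverge) would itself be substantial work. Compare Remark \ref{19}: even over an algebraically closed field, two of the six listed non-Burch classes occur only in characteristic $2$.

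More to the point, you have not identified the mechanism by which non-Burchness excludes the degenerate configurations; you list it only as one of several possible contradiction targets. Since $I:\n=\n^2$, non-Burchness here is exactly $\n I=\n^3$, equivalently $\nu(I)=4$. Its usable form is as follows. If $I+(Y)\subseteq(Y,\ell)+\n^3$ for linear forms $Y,\ell$, then $I+(Y)\subseteq(Y,\ell)+\n(I+(Y))$. Nakayama then gives $I\subseteq(Y,\ell)$, which is impossible for an $\n$-primary ideal.

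The paper exploits exactly this in explicit coordinates, with no normal forms. Keep your $x$ with $x\m=\m^2$ and $y\in(0:x)\setminus\m^2$, and assume $y\m\ne\m^2$. If $y^2=0$, complete $x,y$ to generators $x,y,z$ of $\m$. If $y^2\ne0$, choose $z\in0:y$ with $\m=(x,y,z)$; this is possible since $\ell((0:y)/\m^2)=2$ and $\overline y\notin(0:y)/\m^2$. As $\m^2=x\m=(x^2,xz)$, write $yz$ (resp.\ $y^2$) as $ax^2+bxz$, and write $z^2=cx^2+dxz$. Because $\nu(I)=4$, the ideal $I$ is generated by $XY$ and $Y^2$ (resp.\ $XY$ and $YZ$) together with lifts of these two relations.

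When $a$ is a unit, $I+(Y)$ is generated by $Y$ and two products of linear forms in $X,Z$. The relevant $2\times2$ determinant is $1-b(bc-d)$ after normalizing $a=1$, resp.\ $a^2-b(bc-ad)$. If it lay in $\m$, the two linear factors would agree up to a unit modulo $\n^2$, giving $I+(Y)\subseteq(Y,\ell)+\n^3$, which the Nakayama argument rules out. With that determinant a unit, Lemma \ref{l219}(1) shows that $(z,w)$ with $w=(d-bc)x+cy-z$, resp.\ $(x+y,w)$ with $w=ax-y+bz$, is an exact pair. The case $a\in\m$ reduces to this by replacing $z$ with $z+x$.

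Without this computation, or a completed substitute for it, the last assertion of the theorem remains unproved.
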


\begin{proof}
Put $R=S/I$ and $\m=\n/I$.
It is seen from Corollary \ref{18}(1) that $\edim R\ge3$.
By Remark \ref{r336}(1), the Hilbert function of $R$ must be $(1,3,2)$, and thus $\edim S=\edim R=3$.
It follows from Proposition \ref{p210}(1) that $\m^2=\soc R$, which implies $I:\n=\n^2$ and $\r(R)=2$.
Hence $\nu(I:\n)=\nu(\n^2)=6$.
As $R$ is not Burch, we have $\n I=\n(I:\n)=\n^3$.
Since $R$ is not a complete intersection, we have $\height I\ne\nu(I)$.
Applying Lemma \ref{l25}, we get $3=\height I<\nu(I)\le\nu(I:\n)-\r(R)=6-2=4$, and thus $\nu(I)=4$.
Now, the only thing we have to show that $R$ admits an exact pair of zero-divisors.

Using Lemma \ref{l219}(2), we can choose an element $x\in \m$ such that $x\m=\m^2$.
Nakayama's lemma ensures $x\notin\m^2$.
We observe that $\ell((x))=1+\ell(x\m)=1+2=3$.
The exact sequence $0 \to 0:x\to R \to (x) \to 0$ shows that $\ell(0:x)=\ell(R)-\ell((x))=3$, which implies that $0:x\not\subseteq \m^2$.
Take $y\in(0:x)\setminus \m^2$.
In view of Lemma \ref{l219}(1), we may assume 
$y\m\not=\m^2$, which yields $\ell((y))=1+\ell(y\m)<1+\ell(\m^2)=3$.
Also, $y$ is not in $\m^2=\soc R$, i.e., $\ell((y))>1$.
It follows that $\ell((y))=2$.
We also observe that $\ell(0:y)=\ell(R)-\ell((y))=4$ and $\ell((0:y)/\m^2)=\ell(0:y)-\ell(\m^2)=2$.
Let $\pi\colon S \to R$ be the canonical surjection, and $X,Y$ part of a minimal system of generators of $\n$ such that $\pi(X)=x$ and $\pi(Y)=y$.
We proceed by setting several cases.
\begin{enumerate}[(1)]
\item
We consider the case $y^2=0$.
Write $\m=(x,y,z)$ and $\n=(X,Y,Z)$ with $\pi(Z)=z$.
We have $\m^2=x\m =x(x,y,z)=(x^2,xz)$.
In other words, $x^2,xz$ form a $k$-basis of $\m^2=\m^2/\m^3$.
Note that $0\not=y\m=y(x,y,z)=(yz)$.
There exist $a,b,c,d\in R$ such that $yz=ax^2+bxz$,  $z^2=cx^2+dxz$.
For any lifts $A,B,C,D\in S$ of $a,b,c,d$, we see that
\[
J:=(XY,Y^2,YZ-AX^2-BXZ,Z^2-CX^2-DXZ)\subseteq I.
\]
Note that the residue classes in $\n^2/\n^3$ of the quadratics appearing above are linearly independent over $k$.
Since $J\subseteq I\subseteq \n^2$, the map $J \to\n^2/\n^3$ factors through $I/\n I$.
Therefore, the residue classes in $I/\n I$ of those quadratics are also linearly independent.
Thus, they are part of a minimal system of generators of $I$.
As $\dim_k(I/\n I)=\ell(I)=4$, they actually form a minimal system of generators of $I$, that is, $J=I$.
\begin{enumerate}
\item[(1a)]
Assume $a\notin \m$.
Then $A$ is a unit of $S$.
Replacing $Y$ with $A^{-1}Y$, we may assume $A=1$.
We have
$$
I+(Y)=(Y,X^2+BXZ,Z^2-CX^2-DXZ)=(Y,X(X+BZ), Z((BC-D)X+Z)).
$$
Suppose that $1-B(BC-D)\in \n$.
Then, modulo $(X+BZ)+\n^2$ we have
$$
(BC-D)X+Z\equiv(BC-D)(-BZ)+Z=Z(1-B(BC-D))\equiv0,
$$
and hence $I+(Y) \subseteq (Y,X+BZ)+\n^3 \subseteq (Y,X+BZ)+\n(I+(Y))$ as $\n I=\n^3$.
Nakayama's lemma shows that $I+(Y) \subseteq (Y,X+BZ)$, which contradicts the fact that $I$ is $\n$-primary.
It turns out that $1-B(BC-D)\not\in \n$, so that $1-b(bc-d)\not\in \m$.
Set $w=(d-bc)x+cy-z$.
As $xz,yz=x^2+bxz$ form a $k$-basis of $\m^2$, we have $\m^2=(xz,yz)=z\m$.
Also, we have $z^2=cx^2+dxz=cyz+(d-bc)xz$, which gives $zw=0$.
The matrix $\left(\begin{smallmatrix}d-bc&-1\\-1&-b\end{smallmatrix}\right)$ is invertible as $b(bc-d)-1\not\in \m$. 
Hence $xw=(d-bc)x^2-xz$ and $yw=-yz=-x^2-bxz$ form a $k$-basis of $\m^2$ since so do $x^2$ and $xz$.
Thus $\m^2=(xw,yw)=w\m$.
Lemma \ref{l219}(1) implies $(z,w)$ is an exact pair of zero-divisors in $R$.
\item[(1b)]
Assume $a\in \m$.
Putting $z'=z+x$, we have $\m=(x,y,z')$, $yz'=(a-b)x^2+bxz'$ and ${z'}^2=(c-d-1)x^2+(d+2)xz'$.
As $0\ne yz=ax^2+bxz$ and $a\in\m$, we must have $b\notin\m$, and hence $a-b\notin\m$.
The argument (1a) guarantees the existence of exact pairs of zero-divisors in $R$.
\end{enumerate}
\item
We consider the case $y^2\not=0$.
Since $(0:y)/\m^2$ has length $2$ and does not contain $\overline y$, we have $\m/\m^2=(0:y)/\m^2\oplus k\overline y$.
Hence, there exists $z\in \m\setminus \m^2$ such that $yz=0$ and that $\{\overline x,\overline y,\overline z\}$ is a $k$-basis of $\m/\m^2$.
Then $\m=(x,y,z)$.
Write $\n=(X,Y,Z)$ with $\pi(Z)=z$.
Since $\m^2=x\m=(x^2,xz)$, there exist $a,b,c,d\in R$ with $y^2=ax^2+bxz$,  $z^2=cx^2+dxz$.
For any lifts $A,B,C,D\in S$ of $a,b,c,d\in R$, we have $J:=(XY,YZ,Y^2-AX^2-BXZ,Z^2-CX^2-DXZ)\subseteq I$.
The same argument as in (1) shows $J=I$.
\begin{enumerate}
\item[(2a)]
Assume $a\notin\m$.
Then $A$ is a unit of $S$.
We have
$$
\begin{array}{l}
I+(Y)
=(Y,AX^2+BXZ,Z^2-CX^2-DXZ)\\
\phantom{I+(Y)}=(Y,X(AX+BZ), Z(Z-(D-A^{-1}BC)X)).
\end{array}
$$
Similarly as in (1a), if $A^2-B(BC-AD)\in\n$, then
$$
Z-(D-A^{-1}BC)X
\equiv Z-(D-A^{-1}BC)(-A^{-1}BZ)
=A^{-2}Z(A^2-B(BC-AD))
\equiv0
$$
modulo $(AX+BZ)+\n^2$, and we can derive a contradiction.
We thus obtain $A^2-B(BC-AD)\notin \n$, which shows $a^2-b(bc-ad)\not\in \m$.
Set $w=ax-y+bz$.
Observe that $(x+y)x=x^2$ and $(x+y)z=xz$ form a $k$-basis of $\m^2$, and thus $(x+y)\m=\m^2$.
Also $y(x+y)=y^2=ax^2+bxz=ax(x+y)+bz(x+y)$, which yields that $(x+y)w=0$.
Finally, the condition $a(bd+a)-b^2c=a^a-b(bc-ad)\notin \m$ ensures that the elements $xw=ax^2+bxz \text{ and } zw=bcx^2+(bd+a)xz$ form a $k$-basis of $\m^2$.
Thus, $w\m=\m^2$.
Lemma \ref{l219}(1) implies that $(x+y,w)$ is an exact pair of zero-divisors in $R$.
\item[(2b)]
Assume $a\in \m$.
Putting $z'=z+x$, we have $\m=(x,y,z')$, $yz'=0$, $y^2=(a-b)x^2+bxz'$ and ${z'}^2=(c-d-1)x^2+(d+2)xz'$.
As $0\ne y^2=ax^2+bxz$ and $a\in\m$, we must have $b\notin\m$, and hence $a-b\notin\m$.
The argument (2a) guarantees the existence of exact pairs of zero-divisors in $R$.
\end{enumerate}
\end{enumerate}
Now the proof of the theorem is completed.
\end{proof}

Next, we review the definitions of Tor/Ext-friendliness, embedding deformations and G-regularity, and then remark how these properties of a local ring relate to dominance.

\begin{dfn}
\begin{enumerate}[(1)]
\item
We say that a local ring $R$ is {\em Tor-friendly} provided that if $M$ and $N$ are $R$-modules such that $\Tor^R_i(M,N)=0$ for all $i\gg0$, then one has either $\pd_RM<\infty$ or $\pd_RN<\infty$.
\item
Dually to (1), we say that $R$ is {\em Ext-friendly} provided that if $M,N$ are $R$-modules such that $\Ext_R^i(M,N)=0$ for all $i\gg0$, then one has either $\pd_RM<\infty$ or $\id_RN<\infty$.
\item
We say that $R$ has an \emph{embedded deformation} if there exist a local ring $S$ with maximal ideal $\n$ and an $S$-regular element $f\in \n^2$ such that $\widehat{R}\cong S/(f)$.
\item
Following \cite{greg}, we say that a local ring $R$ is \emph{G-regular} if every totally reflexive $R$-module is free.
\end{enumerate}
\end{dfn}

\begin{rem} \label{r312}
\begin{enumerate}[(1)]
\item
If $R$ is Tor-friendly, then it is Ext-friendly by \cite[Proposition 5.5]{AINS}.
If $R$ is non-Gorenstien and Ext-friendly, then it is G-regular by definition.
\item
If $(x,y)$ is an exact pair of zero-divisors of $R$, then $R/(x)$ and $R/(y)$ are nonfree totally reflexive $R$-modules by \cite[Proposition 2.4]{greg}, and hence $R$ is not G-regular.
\item 
If $R$ has an embedded deformation, then $R$ is not G-regular.
This follows from \cite[Proposition 4.6 and Corollary 4.7]{greg} (note that it is assumed in \cite[Proposition 4.6]{greg} that the local ring is G-regular, but this assumption is not used in the proof of its ``only if'' part).
Also, if $R$ has an embedded deformation and is Ext-friendly, then it is a hypersurface.
This follows from \cite[Corollary 4.4]{Se} (note that it is assumed in \cite[Corollary 4.4]{Se} that the local ring is Cohen--Macaulay, but this assumption is not used in its proof).
\item
Suppose that $R$ is Cohen--Macaulay of codimension at most three.
Then $R$ is Tor-friendly if and only if $R$ is Ext-friendly, if and only if $R$ has no embedded deformations; see \cite[Theorems 3.2 and 3.8]{LM}.
\item
By (1), (2), (3), \cite[Theorem 1.1]{dlr} and \cite[Corollary 5.5]{udim} one has the following hierarchy:
\[
\xymatrix@R-1pc@C-2.1pc{
\qquad\text{Burch}\ar@{=>}[rrrr]&&&&
\text{uniformly dominant}\ar@{=>}[d]&&
\text{Ext-friendly}\ar@{=>}[rr]_-{\rm(a)} \ar@{=>}[rrd]_-{\rm(b)} &&
\text{G-regular}\ar@{=>}[r]\ar@{=>}[d]&
\text{no exact pair of zero-divisors}\\
&&&&\text{dominant}\ar@{=>}[rr]&&
\text{Tor-friendly}\ar@{=>}[u]&&
\text{no embedded deformation}}
\]
where it is assumed that the ring is not Gorenstein for (a) and not a hypersurface for (b).
\item
An artinian local ring $R$ whose Hilbert function has the form $(1,e,e-1)$ does not necessarily admit an exact pair of zero-divisors.
Indeed, let $S=k[\![x,y,z]\!]$ be the formal power series ring over a field $k$, and $\n=(x,y,z)$ the maximal ideal of $S$.
Let $I=(x^2,xy,xz,y^2,yz^2,z^3)$ be the ideal of $S$, and put $R=S/I$.
Then $R$ is an artinian local $k$-algebra with Hilbert function $(1,3,2)$.
Since $\n I:\n\not\ni z^2\in I:\n$, the ring $R$ is Burch.
It follows from (5) that $R$ does not have an exact pair of zero-divisors.
\item
In relation to (6), for an infinite field $k$, every ``generic'' standard graded $k$-algebra whose Hilbert function is of the form $(1,e,e-1)$ has an exact pair of zero-divisors; see \cite[Corollary 8.5]{CJRSW} for details.
We also refer the reader to \cite{HeS,KSV} for further investigation on exact pairs of zero-divisors, and to \cite{Y1} for other approaches to artinian local rings whose Hilbert functions have the form $(1,e,e-1)$.
\end{enumerate}
\end{rem}

In view of the diagram of implications given in the above Remark \ref{r312}(5), it is natural to ask the following.

\begin{ques}
Does there exist an implication between having no embedded deformation and having no exact pair of zero-divisors?
\end{ques}

As a consequence of the theorem stated above, we obtain the following result, which says that in the case where the base local ring has low length, various conditions discussed so far turn out to be equivalent.

\begin{cor}\label{20}
For an artinian non-Gorenstein local ring $R$ with $\ell(R)\le 6$ the following are equivalent.\\
{\rm(1)} $R$ is Burch.
{\rm(2)} $R$ is uniformly dominant.
{\rm(3)} $R$ is dominant.
{\rm(4)} $R$ is Tor-friendly.
{\rm(5)} $R$ is Ext-friendly.\\
{\rm(6)} $R$ is G-regular.
{\rm(7)} $R$ has no embedded deformation.
{\rm(8)} $R$ has no exact pair of zero-divisors.
\end{cor}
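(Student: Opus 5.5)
The proof will close the cycle of implications around the hierarchy in Remark \ref{r312}(5), using Theorem \ref{p311} to supply the one implication that the hierarchy does not give. That diagram, specialised to a non-Gorenstein ring, gives
(1)$\Rightarrow$(2)$\Rightarrow$(3)$\Rightarrow$(4)$\Rightarrow$(5)$\Rightarrow$(6). Here (4)$\Rightarrow$(5) is \cite[Proposition 5.5]{AINS}, and (5)$\Rightarrow$(6) holds because $R$ is not Gorenstein. From (6), Remark \ref{r312}(2) gives (8), and Remark \ref{r312}(3) gives (7). So it suffices to show that (8)$\Rightarrow$(1) and (7)$\Rightarrow$(1).

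For (8)$\Rightarrow$(1), argue by contraposition. Suppose $R$ is artinian, non-Gorenstein, not Burch, and $\ell(R)\le6$. By Corollary \ref{18}(2) every non-Gorenstein ring of length at most $5$ is Burch, so $\ell(R)=6$. The Burch and Gorenstein properties and exact pairs of zero-divisors all transfer between $R$ and $\widehat R$, and $\widehat R=R$ since $R$ is artinian. By Cohen's structure theorem write $R\cong S/I$ with $(S,\n)$ regular and $I\subseteq\n^2$; this ideal is $\n$-primary and $\ell(S/I)=6$. Theorem \ref{p311} then produces an exact pair of zero-divisors, contradicting (8).

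For (7)$\Rightarrow$(1), again take $R$ non-Burch of length $6$. Theorem \ref{p311} gives $\edim R=3$, hence $\codim R=3$, and $R$ is Cohen--Macaulay because it is artinian. Remark \ref{r312}(4) then says that having no embedded deformation is equivalent to being Ext-friendly. Ext-friendly implies G-regular by Remark \ref{r312}(1), and G-regular excludes an exact pair of zero-divisors by Remark \ref{r312}(2). This contradicts the pair found above, so $R$ must have an embedded deformation.

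I expect no genuine obstacle: the substantive work is in Theorem \ref{p311}. The points to handle with care are the passage to the completion, to justify writing $R=S/I$, and the observation that the codimension-three hypothesis of Remark \ref{r312}(4) is available only because Theorem \ref{p311} pins down $\edim R=3$ in the non-Burch case.
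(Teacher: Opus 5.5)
Your proposal is correct and follows essentially the paper's argument: Theorem \ref{p311} together with Corollary \ref{18}(2) gives (8)$\Rightarrow$(1), the restriction $\edim R=3$ from Theorem \ref{p311} lets Remark \ref{r312}(4) handle (7), and the hierarchy in Remark \ref{r312}(5) supplies the remaining implications. The only cosmetic difference is that the paper records (7)$\Rightarrow$(4) directly, while you derive (7)$\Rightarrow$(1) by contradiction with the exact pair of zero-divisors; both use the same ingredients.
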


\begin{proof}
Theorem \ref{p311} and Corollary \ref{18}(2) show (8) implies (1).
Theorem \ref{p311} says $R$ is Burch unless $\edim R=3$.
By Remark \ref{r312}(4) we have (7) implies (4).
The remaining implications are shown by Remark \ref{r312}(5).
\end{proof}

\begin{rem}\label{36}
\begin{enumerate}[(1)]
\item
The implication (3)$\Rightarrow$(1) in Corollary \ref{20} is no longer true if we remove the assumption $\ell(R)\le6$.
In fact, there is a dominant local ring $R$ with $\ell(R)=7$ which is not Burch; see Remark \ref{r336}(2).
\item
The implication (6)$\Rightarrow$(5) in Corollary \ref{20} is no longer true if we remove the assumption that $\ell(R)\le6$.
Indeed, let $k$ be a field, and set $S=k[\![x,y,z,w]\!]$, $R=S/(x^2,xy,y^2,z^2,zw,w^2)$, $T=S/(x^2,xy,y^2)$ and $C=\Ext_T^2(R,T)$.
Then $R$ is an artinian non-Gorenstein local ring with $\ell(R)=9$, and $C$ is a semidualizing $R$-module with $\pd_RC=\id_RC=\infty$; see \cite[Example 1.2]{JLS}.
Hence $R$ is not Ext-friendly.
On the other hand, the maximal ideal $\m$ of $R$ is such that $\m^3=0$, and $R$ has Hilbert function $(1,4,4)$.
It follows from \cite[Theorem 3.1]{Y1} that $R$ is G-regular.
Thus, (5) is not satisfied but (6) is.
\end{enumerate}
\end{rem}

The ring $R$ in Remark \ref{36}(2) is such that $\ell(R)=9$, so it is natural to ask whether the implication (6)$\Rightarrow$(5) in Corollary \ref{20} holds or not when $\ell(R)=7,8$.
We do not know the answer, so we pose the following question.

\begin{ques}
Let $R$ be a G-regular non-Gorenstein local ring with length $7$ or $8$.
Is then $R$ Ext-friently?
\end{ques}

Now we shall construct an explicit example of an artinian non-Gorenstein non-Burch local ring which does possess an exact pair of zero-divisors.
Note that such a ring can have length $6$ and Hilbert function $(1,3,2)$ or length $7$ and Hilbert function $(1,3,2,1)$.

\begin{ex} \label{ex316}
Let $S=k[\![x,y,z]\!]$ be a formal power series ring over a field $k$, and denote by $\n$ the maximal ideal of $S$.
Fix an integer $n\ge 2$.
Let $I=(xy,xz,yz,x^2+y^2+z^n)$ be an ideal of $S$, and put $R=S/I$.
The ideal $J=(x^3,xy,y^3,xz,yz,z^{n+1})$ of $S$ is contained in $I$.
Hence the Hilbert function of $R$ satisfies
\[
\hh_i(R) \le \hh_i(S/J)=1 \text{ for all $3\le i \le n$,}\quad \text{and}\quad\hh_i(R)\le \hh_i(S/J)=0 \text{ for all $i\ge n+1$.}
\]
The containment $I\subseteq \n^2$ shows that $\hh_1(R)=3$.
Observing that $I+\n^3$ is equal to $(xy,xz,yz,x^2+y^2,z^3)$ for $n\ge 3$ and to $(xy,xz,yz,x^2+y^2-z^2)$ for $n=2$, we deduce that $\hh_2(R)=2$.
Also, since $I+(x,y)=(x,y,z^n)$, we have that $\hh_i(R) \ge \hh_i(S/I+(x,y))=1$ for $3\le i\le n$.
In summary, the following hold true.
\[
\hh_1(R)=3, \quad \hh_2(R)=2, \quad \hh_i(R) = 1 \text{ for all $3\le i \le n$,} \quad\text{and}\quad\hh_i(R)=0 \text{ for all $i\ge n+1$.}
\]
Thus we have $\ell(R)=n+4$.
It is worth writing down some special cases here:
the artinian local ring $R$ has length $6$ and Hilbert function $(1,3,2)$ for $n=2$, and has length $7$ and Hilbert function $(1,3,2,1)$ for $n=3$.

We claim that  $(x+y-z,x+y-z^{n-1})$ is an exact pair of zero-divisors of $R$.
Indeed, the containment $(x+y-z) \subseteq \ann_R(x+y-z^{n-1})$ is immediate.
As $R/(x+y-z) \cong k[x,y]/(xy,x^2,y^2)$ and $R/(x+y-z^{n-1}) \cong k[x,z]/(xz,x^2,z^n)$, we have $\ell(R/(x+y-z))=3$ and $\ell(R/(x+y-z^{n-1}))=n+1$.
Thus, we get 
\[
\ell(R/(x+y-z^{n-1}))=n+1=\ell(R)-\ell(R/(x+y-z))=\ell((x+y-z)).
\]
Using the exact sequence $0 \to \ann(x+y-z^{n-1}) \to R \to R \to R/(x+y-z^{n-1}) \to 0$, we see that the equalities $\ell(\ann(x+y-z^{n-1}))=\ell(R/(x+y-z^{n-1}))=\ell((x+y-z))$ hold.
The equality $(x+y-z)=\ann_R(x+y-z^{n-1})$ now follows.
We conclude that $(x+y-z,x+y-z^{n-1})$ is an exact pair of zero-divisors.
We also note that $R$ is not Burch by Remark \ref{r312}(5), and that $R$ is not Gorenstein because its socle $(x^2,z^n)$ has length two.
\end{ex}

We close the section by remarking that applying the main result of this section, we can determine the structure of a non-Gorenstein non-Burch local algebra over an algebraically closed field with length six.

\begin{rem}\label{19}
Let $k$ be an algebraically closed field.
The isomorphism classes of commutative artinian local $k$-algebras of length at most six are completely classified in \cite[Table {1}]{P}.
Let $R$ be one of those $k$-algebras.
Suppose that $R$ is neither Gorenstein nor Burch.
Then Corollary \ref{18}(2), Remark \ref{r336}(1) and Theorem \ref{p311} yield that $R=k[x,y,z]/I$, $\r(R)=2$, $\nu(I)=4$, and the Hilbert function of $R$ is $(1,3,2)$.
Thus $I$ is one of the following ideals, where the last two ones are possible only when $k$ has characteristic two.
$$
\begin{array}{l}
(xy,z^2,xz-yz,x^2+y^2-xz),\qquad
(xy,xz,yz,x^2+y^2-z^2),\qquad
(x^2,xy,yz,xz+y^2-z^2),\\
(x^2,xy,y^2,z^2),\qquad
(x^2,z^2,y^2-xz,yz),\qquad
(x^2,xy,y^2,z^2-xz).
\end{array}
$$
Conversely, in the case where $I$ is one of these ideals, the $k$-algebra $R$ is an artinian non-Gorenstein non-Burch local ring with $\ell(R)=6$, $\r(R)=2$, $\nu(I)=4$ and Hilbert function $(1,3,2)$.
\end{rem}

\section{Upper bounds for dominant indices}

In this section we explore upper bounds for the dominant index in various kinds of situations.
The following lemma is helpful to compute (a bound of) the dominant index through modding out by a regular sequence.
Assertion (2b) of the lemma can be shown by an elementary proof, which is a special case of \cite[Theorem B]{LS}.
Assertion (3) of the lemma will be used frequently, the first item of which is taken from \cite{udim}.

\begin{lem} \label{227}
Let $R$ be a local ring with maximal ideal $\m$ and residue field $k$.
\begin{enumerate}[\rm(1)]
\item
\begin{enumerate}[\rm(a)]
\item
Suppose that $\depth R=0$ and $\Burch(R)\ge 2$.
Then $k$ is a direct summand of $\syz^i M$ for every nonfree $R$-module $M$ and for every integer $i\ge 5$.
\item
Suppose that $R$ is a nontrivial fiber product.
Then $\m$ is a direct summand of $\syz^3 M\oplus \syz^4 M$ for every $R$-module $M$ of infinite projective dimension.
\end{enumerate}
\item
\begin{enumerate}[\rm(a)]
\item
Suppose that $R=S/I$ where $(S,\n)$ is a regular local ring and $I$ a Burch ideal of $S$.
Let $M$ be a nonfree $R$-module.
Then there exists an element $x\in R$ such that $k$ belongs to $\langle\k(x,M) \rangle_1^{\ds(R)}$.
\item
Let $M$ be an $R$-module.
Let $\xx=x_1,\dots,x_n$ and $\yy=y_1,\dots,y_m$ be sequences of elements of $R$, and assume that $\xx$ is regular on $R,M$.
Let $C=\k(\yy,M/\xx M)$ be the Koszul complex.
Then the inequality $\level^M_{\ds(R)}(k) \le (n+m+1)(\level_{\ds(R/(\xx))}^C(k)+1)-1$ holds.
\item Let $I$ be an ideal of $R$ with $\Sing R\subseteq\V(I)$ (e.g., $I=\bigcap_{\p\in \Sing R}\p$) and put $h=\dim R/I$.
Then for each nonzero object $X$ of $\ds(R)$ there exists an $R$-module $M$ of infinite projective dimension such that $M_\p$ is $R_\p$-free for all $\p\in\spec R\setminus\{\m\}$, and $M$ belongs to $\langle X\rangle_{h+1}^{\ds(R)}$.
\end{enumerate}
\item
\begin{enumerate}[\rm(a)]
\item
Let $x\in\m$ be an $R$-regular element.
Then the inequality $\dx(R)\le2\dx(R/(x))+1$ holds.
If $x$ is not in $\m^2$, then the inequality $\dx(R/(x))\le2\dx(R)+1$ holds as well.
\item
Let $\xx=x_1,\dots,x_n$ be an $R$-sequence.
Then there is an inequality $\dx(R)\le(n+1)(\dx(R/(\xx))+1)-1$.
\end{enumerate}
\end{enumerate}
\end{lem}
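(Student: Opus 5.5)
The plan is to treat the three groups of assertions separately. Parts (1) and (2a) are essentially imported from the literature. Parts (2b), (2c) and (3) are formal consequences of the triangulated structure of $\ds(R)$ together with Remark \ref{21}.

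For (1a), I would invoke the Dao--Eisenbud result that $\Burch(R)\ge2$ forces $k$ to be a direct summand of high syzygies of any nonfree module (cf.\ \cite{DE,udim}). For (1b), I would use the decomposition $\m=I_1\oplus I_2$ of Remark \ref{33}(1) and the argument of Nasseh--Takahashi \cite{fiber} showing that $\m$ splits off $\syz^3M\oplus\syz^4M$. For (2a), the key fact from \cite{burch} is this: for a Burch ideal $I$ there is $x\in R$ such that $k$ is a direct summand of $\syz^2_R(M)/x\syz^2_R(M)$ (or of a syzygy of it). Since $\k(x,N)$ is the cone of multiplication by $x$ on $N$, and $N\cong(\syz^2 N)[2]$ in $\ds(R)$, this gives $k\in\langle\k(x,M)\rangle_1$ after adjusting shifts.

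For (2b), I would argue as follows.
\begin{enumerate}[(i)]
\item The complex $C=\k(\yy,M/\xx M)$ is obtained from $M$ by $n+m$ successive mapping cones of multiplication maps, using $M/\xx M\simeq\k(\xx,M)$ because $\xx$ is $M$-regular. Hence $C\in\langle M\rangle_{n+m+1}^{\ds(R)}$.
\item The ring $R/(\xx)$ has finite projective dimension over $R$, so restriction of scalars sends perfect complexes to perfect complexes. It therefore induces an exact functor $\ds(R/(\xx))\to\ds(R)$.
\item Setting $l=\level_{\ds(R/(\xx))}^C(k)$, step (ii) gives $k\in\langle C\rangle_{l+1}^{\ds(R)}$.
\item The standard inclusion $\langle\langle M\rangle_a\rangle_b\subseteq\langle M\rangle_{ab}$ then yields the stated bound.
\end{enumerate}

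For (2c), I would choose $x_1,\dots,x_h\in I$ with $\V(x_1,\dots,x_h)\cap\V(I)=\{\m\}$. Then $\k(\xx,X)\in\langle X\rangle_{h+1}$, and it vanishes in $\ds(R_\p)$ for every $\p\ne\m$: either some $x_i$ is a unit in $R_\p$, or $R_\p$ is regular. Replacing it by a high syzygy module via Remark \ref{21} gives $M$ with $M_\p$ free on the punctured spectrum. The genuinely delicate point is that $M$ must be nonzero in $\ds(R)$, i.e.\ $\pd_RM=\infty$. For this I would localize at a suitable prime in the (nonempty) singular support of $X$ and follow the argument of \cite{udim}.

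For (3b), take an $R$-module $M$ of infinite projective dimension and replace it by $\syz^nM$, which is legitimate by Remark \ref{21}; then $\xx$ is $M$-regular. Next, $M/\xx M$ has infinite projective dimension over $R/(\xx)$, since projective dimension is preserved when passing modulo a regular sequence. Hence $\level^{M/\xx M}_{\ds(R/(\xx))}(k)\le\dx(R/(\xx))$, and (2b) with $m=0$ gives $\dx(R)\le(n+1)(\dx(R/(\xx))+1)-1$. The first inequality of (3a) is the case $n=1$.

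For the second inequality of (3a), start from an $R/(x)$-module $N$ with $\pd_{R/(x)}N=\infty$ and set $L=\syz_RN$.
\begin{enumerate}[(i)]
\item The element $x$ is $L$-regular, and since $x\notin\m^2$, Nagata's theorem gives $L/xL\cong N\oplus\syz_{R/(x)}N$.
\item $\pd_RL=\infty$, because $\pd_{R/(x)}L/xL=\infty$.
\item Hence $k\in\langle L\rangle_{\dx(R)+1}^{\ds(R)}$.
\item Apply the exact functor $-\lten_RR/(x)\colon\ds(R)\to\ds(R/(x))$. It sends $k$ to $k\oplus k[1]$ and $L$ to $L/xL$.
\end{enumerate}
This puts $k$ in $\langle L/xL\rangle_{\dx(R)+1}\subseteq\langle N\rangle_{\dx(R)+1}$ up to shifts. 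That is already within the bound $2\dx(R)+1$. If the Nagata splitting needs more care, I would instead pass through one extra triangle relating $N$ and $\syz N$, which would account for the factor $2$.

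I expect the main obstacle to be (2c), specifically guaranteeing that the object produced by the Koszul construction stays nonzero in the singularity category while becoming locally free on the punctured spectrum.
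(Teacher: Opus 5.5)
Your treatment of (1), (2b), (3b) and the first inequality of (3a) matches the paper; the first inequality of (3a) is indeed (3b) with $n=1$. One correction in (2b): the count $n+m+1$ comes from filtering $\k(\xx,\yy,M)$ by homological degree. Iterating cones of the $x_i$, $y_j$ only gives $2^{n+m}$.

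The real problem is (2a). Even granting that $k$ is a summand of $L/xL$ with $L=\syz^2M$, this does not put $k$ in $\langle\k(x,M)\rangle_1$. In general $x$ is a zero-divisor on $L$, so $\k(x,L)$ is not quasi-isomorphic to $L/xL$. It has homology $0:_Lx$ and $L/xL$ in two degrees, and there is only a triangle $(0:_Lx)[1]\to\k(x,L)\to L/xL\rightsquigarrow$. Getting from $\k(x,L)$ to $L/xL$ therefore needs a cone with $0:_Lx$, which $\langle-\rangle_1$ does not allow.

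The missing idea is that $\k(x,M)$ is itself a shifted module in $\ds(R)$. Take $x$ to be the image of some $z\in\n\setminus(\n I:(I:\n))$. The construction in the proof of \cite[Lemma 7.2(1)]{burch} pulls back a free cover $F\to M$ along $x\colon M\to M$. It yields an exact sequence $0\to\syz M\to N\to M\to0$ with $x\in\I_1(N)$ and $\k(x,M)\cong(0\to N\to F\to0)$ in $\db(\mod R)$. Hence $\k(x,M)\cong N[1]$ in $\ds(R)$, and \cite[Proposition 4.4]{DE} makes $k$ a direct summand of $\syz^2N$.

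In (2c), the $x_i$ cannot lie in $I$, since then $\V(\xx)\supseteq\V(I)$. Their images must form a system of parameters of $R/I$, so that $I+(\xx)$ is $\m$-primary. More seriously, you leave open whether $Y=X\lten_R\k(\xx,R)$ is nonzero in $\ds(R)$. Localizing cannot decide this, because by construction $Y_\p=0$ in $\ds(R_\p)$ for every $\p\ne\m$. The needed argument is to tensor with $k$. Since $\xx\subseteq\m$, the complex $\k(\xx,k)$ has zero differentials, so $Y\lten_Rk\cong\bigoplus_{i=0}^h(X\lten_Rk)^{\oplus\binom hi}[i]$ contains $X\lten_Rk$ as a summand. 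As $X$ is not perfect, $X\lten_Rk$ has unbounded homology, hence so does $Y\lten_Rk$, and $Y$ is not perfect.

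For the second inequality of (3a), the splitting $\syz_RN/x\syz_RN\cong N\oplus\syz_{R/(x)}N$ is not available for arbitrary $R/(x)$-modules; it does hold, for example, when $\m N=0$. Take $R=k[\![s,t,x]\!]/(s^3+t^3+xs)$ and $N=R/(x,s+t)$. Over $R/(x)$, the module $\syz_RN/x\syz_RN$ has minimal presentation matrix $\left(\begin{smallmatrix}s&s+t\\ s^2-st+t^2&0\end{smallmatrix}\right)$, so its $\I_1$ is the maximal ideal. By contrast, $\I_1(N\oplus\syz_{R/(x)}N)=(s+t,s^2-st+t^2)$ is not.

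So your sharper bound $\dx(R/(x))\le\dx(R)$ is unproved, but your fallback is correct. The exact sequence $0\to N\to\syz_RN/x\syz_RN\to\syz_{R/(x)}N\to0$ puts $\syz_RN/x\syz_RN$ in $\langle N\rangle_2$ in $\ds(R/(x))$, giving the bound $2\dx(R)+1$. You must then justify $\pd_R\syz_RN=\infty$ by the Betti-number form $\P^R_N(t)=(1+t)\P^{R/(x)}_N(t)$ rather than by the splitting. The paper itself simply cites \cite[Theorem 6.2]{udim} for (3a).
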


\begin{proof}
(1a) The assertion follows from \cite[Theorem A]{DM}; see also \cite[Theorem 4.1]{DE}.

(1b) The assertion follows from \cite[Theorem 2.9]{udim}; see also \cite[Theorem A]{fiber}.

(2a) Since $I$ is a Burch ideal, we find an element $z\in \n \setminus (\n I: (I:\n))$.
Let $x$ be the image of $z$ in $R$.
There is an exact sequence $0\to\syz M\to F\to M\to0$ with $F$ free.
The proof of \cite[Lemma 7.2(1)]{burch} yields an exact sequence $0 \to \syz M \to N \to M \to 0$ with $x\in\I_1(N)$ and an isomorphism $(0\to N\to F\to0)\cong\k(x,M)$ in $\db(\mod R)$.
There is an exact triangle $F \to\k(x,M) \to N[1] \rightsquigarrow$ in $\db(\mod R)$.
Passing to the singularity category, we get an isomorphism $K(x,M)\cong N[1]$ in $\ds(R)$.
Since $x$ is in $\I_1(N)$, it follows from \cite[Proposition 4.4]{DE} that $k$ is a direct summand of $\syz^2 N$.
Thus $k$ belongs to $\langle K(x,M) \rangle_1^{\ds(R)}$.

(2b) We may assume that $l:=\level_{\ds(R/(\xx))}^C(k)$ is finite, and then $k$ belongs to $\langle C\rangle_{l+1}^{\ds(R/(\xx))}$.
The exact functor $-\lten_{R/(\xx)}(R/(\xx))_R:\db(\mod R/(\xx))\to\db(\mod R)$ induces an exact functor $\ds(R/(\xx))\to\ds(R)$ as $(R/(\xx))_R$ (the right $R$-module $R/(\xx)$) has finite projective dimension.
Applying this functor shows that $k$ is in $\langle C\rangle_{l+1}^{\ds(R)}$.
Since $\xx$ is regular on $M$, the $R$-module $M/\xx M$ is quasi-isomorphic to the Koszul complex $\k(\xx, M)$.
There are isomorphisms $C\cong M/\xx M\lten_R\k(\yy,R) \cong\k(\xx,M)\lten_R\k(\yy,R) \cong\k(\xx,\yy,M)$ in $\db(\mod R)$.
The Koszul complex $\k(\xx,\yy,M)$ induces a series of exact triangles $\{M^{\oplus\binom{n+m}{i}}[i] \to K_i \to K_{i+1} \rightsquigarrow\}_{0\le i\le n+m}$, where $K_0=\k(\xx,\yy,M)$ and $K_{n+m+1}=0$.
By induction we get $\k(\xx,\yy,M)\in \langle M\rangle_{n+m+1}^{\db(\mod R)}$.
As $K(\xx,\yy,M)\cong C$, it follows that $k$ is in $\langle M\rangle_{(n+m+1)(l+1)}^{\ds(R)}$, which yields the desired inequality $\level^M_{\ds(R)}(k)\le (n+m+1)(l+1)-1$.

(2c) Let $\xx=x_1,\dots,x_h$ be a sequence of elements of $R$ such that the image of $\xx$ in $R/I$ forms a system of parameters of $R/I$.
Then $I+(\xx)$ is an $\m$-primary ideal of $R$.
Decomposing the Koszul complex $\k(\xx,R)$ into exact triangles and applying the exact functor $X\lten_R-$, we have that $Y:=X\lten_R\k(\xx,R)\in\langle X\rangle_{h+1}^{\ds(R)}$.
Put $d=\dim R$.
By Remark \ref{21} we find an $R$-module $N$ and an integer $n$ such that $Y\cong(\syz^dN)[n]$ in $\ds(R)$.
Setting $M=\syz^dN$, we see that $Y\cong M[n]$ in $\ds(R)$ and $M\in\langle X \rangle_{h+1}^{\ds(R)}$.
There are isomorphisms
$$
\textstyle Y\lten_Rk\cong X\lten_R\k(\xx,k)\cong X\lten_R\big(\bigoplus_{i=0}^hk^{\oplus\binom{h}{i}}[i]\big)\cong\bigoplus_{i=0}^h(X\lten_Rk)^{\oplus\binom{h}{i}}[i]
$$
in $\db(\mod R)$, and the last term contains $X\lten_Rk$ as a direct summand.
Hence $Y$ is nonzero in $\ds(R)$, so $\pd_RM=\infty$.
Let $\p$ be a prime ideal of $R$ different from $\m$.
If $\p$ is not in $\V(I)$, then $R_\p$ is regular, and $M_\p$ has finite projective dimension as an $R_\p$-module.
Suppose that $\p$ is in $\V(I)$.
Then $\p$ does not contain $\xx$, and there is an isomorphism $\k(\xx,R)_\p\cong 0$ in $\db(\mod R_\p)$.
It follows that $Y_\p$ is zero in $\ds(R_\p)$, and so is $M_\p$.
Again, $M_\p$ has finite projective dimension as an $R_\p$-module.
As $M=\syz^dN$, we observe that $M_\p$ is $R_\p$-free.

(3) The first assertion is none other than \cite[Theorem 6.2]{udim}.
To prove the second assertion, it suffices to verify that if $r$ is an integer with $\dx(R/(\xx))\le r$, then $\dx(R)\le(n+1)(r+1)-1$.
Let $M$ be a nonzero object of $\ds(R)$.
In view of Remark \ref{21}, we may assume that $M$ is an $R$-module such that $\xx$ is an $M$-sequence.
Applying (2b), we get the inequality $\level^M_{\ds(R)}(k) \le (n+1)(\level^{M/\xx M}_{\ds(R/(\xx))}(k)+1)-1 \le (n+1)(r+1)-1$.
Consequently, the desired inequality $\dx(R)\le(n+1)(r+1)-1$ follows.
\end{proof}

We give a name to the condition of a local ring deforming to a nontrivial fiber product.

\begin{dfn}
Let $(R,\m,k)$ be local.
For each $n\ge0$ we say that $R$ is a {\em nontrivial $n$-fiber product} over $k$ if there is an $R$-sequence $\xx=x_1,\dots,x_n$ such that $R/(\xx)$ is a nontrivial fiber product over $k$.
If there is an integer $n\ge0$ such that $R$ is a nontrivial $n$-fiber product over $k$, then $R$ is called a {\em quasi-fiber product ring}.
In other words, $R$ is a quasi-fiber product ring if and only if $\m$ is {\em quasi-decomposable} in the sense of \cite{fiber}.
\end{dfn}

The following proposition gives upper bounds for dominant indices under certain conditions.

\begin{prop}\label{t220}
Put $e=\edim R$, $r=\r(R)$, $u=\depth R$, $I=\bigcap_{\p\in \Sing R}\p$, and $h=\dim R/I$.
\begin{enumerate}[\rm(1)]
\item
If one of the following holds, then $R$ is uniformly dominant with $\dx(R)\le0$.\\
{\rm(a)}
$R$ is a nontrivial fiber product over $k$.\quad
{\rm(b)}
$\soc R\nsubseteq\m^2$.\\
{\rm(c)}
$R$ is non-Gorenstein, artinian, and stretched.\quad
{\rm(d)}
$\m^2=0$.\quad
{\rm(e)}
$u=0$ and $\Burch(R)\ge 2$.\\
{\rm(f)}
$R\cong S/\n J$, where $(S,\n)$ is a regular local ring with $\dim S\ge 2$ and $J$ is an ideal of $S$.\\
{\rm(g)}
$R\cong S/\n^s$, where $(S,\n)$ is a regular local ring with $\dim S\ge 2$ and $s$ is a positive integer.\\
{\rm(h)}
$R\cong S/(J:\n)$, where $(S,\n)$ is a regular local ring with $\dim S\ge 2$ and $J\subseteq\n^2$ is an ideal of $S$ such that $S/J$ is an artinian Gorenstein ring.\quad
{\rm(i)}
$\m^3=0$ and $\P_k(t)\ne\frac{1}{1-et+rt^2}$.
\item
Suppose that $\syz^{u+1}k$ belongs to $\add(R\oplus \syz^{u+2}k)$.
Then $R$ is uniformly dominant, and the following hold.\\
{\rm(a)}
One has $\dx(R)\le2$ if $u=0$.\quad
{\rm(b)}
One has $\dx(R)\le(h+1)(2u+3)-1$ if $u>0$.\\
{\rm(c)}
One has $\dx(R)\le2u+2$ if $R$ has an isolated singularity.
\item
Suppose that $\syz^{u}k$ belongs to $\add(R\oplus \syz^{u+2}k)$.
Then $R$ is uniformly dominant, and the following hold.\\
{\rm(a)}
One has $\dx(R)\le 1$ if $u=0$.\quad
{\rm(b)}
One has $\dx(R)\le (h+1)(2u+4)-1$ if $u>0$.\\
{\rm(c)}
One has $\dx(R)\le 2u+3$ if $R$ has an isolated singularity.
\item
Suppose that $R$ is Burch.
Then $R$ is uniformly dominant, and the following hold.\\
{\rm(a)}
One has $\dx(R)\le1$ if $u=0$.\quad
{\rm(b)}
One has $\dx(\widehat R)\le u+1$ and $\dx(R)\le(h+1)(2u+4)-1$.
\item
Let $n\ge0$.
Let $R$ be a nontrivial $n$-fiber product over $k$.
Then $R$ is uniformly dominant with $\dx(R)\le n$.
\end{enumerate}
\end{prop}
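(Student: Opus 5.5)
We prove the five parts in order; later parts lean on the earlier ones and on Lemma \ref{227}. Throughout we use Proposition \ref{218}: $\dx(R)\le n$ means that for every module $M$ of infinite projective dimension, some syzygy $\syz^rk$ lies in $[M]_{n+1}$.

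\emph{Part (1).} The key case is (a). By Lemma \ref{227}(1b), $\m=\syz k$ is a direct summand of $\syz^3M\oplus\syz^4M$. Hence $\syz k\in[M]=[M]_1$, and $\dx(R)\le0$ by Proposition \ref{218}.
\begin{itemize}
\item Case (b) reduces to (a) via Proposition \ref{p210}(1) (b)$\Rightarrow$(a), when $\edim R\ge2$. If $\edim R\le1$, then $R$ is a field or an artinian hypersurface with $\soc R\not\subseteq\m^2$, so $\m\cong k$. In that case $\syz M$ is a sum of copies of $k$, which gives the claim directly.
\item Case (c) reduces to (b) by Proposition \ref{p210}(1).
\item Case (d) reduces to (b) whenever $\m\ne0$.
\item For case (e), Lemma \ref{227}(1a) gives $k$ as a summand of $\syz^5M$.
\item Cases (f), (g), (h) follow from (e) via Lemma \ref{l217}(2a), (2c), (2b) respectively, since $\Burch(R)=\edim R\ge2$. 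In (g) with $s=1$, $R$ is a field. In (f), if $J=S$ then $R=S/\n$.
\item For case (i), with $\m^3=0$, we would use the known dichotomy: either $\m^2$ meets the socle nontrivially outside $\m^2$, or $R$ is Koszul-type with $\P_k(t)=1/(1-et+rt^2)$. Otherwise we aim to show $\soc R\not\subseteq\m^2$ and reduce to (b). We expect this step to be the main obstacle. The plan is to show that $\soc R\subseteq\m^2$ with $\m^3=0$ forces $\soc R=\m^2$, and then invoke the classical computation (Avramov--Iyengar--Şega) that the Poincaré series is $1/(1-et+rt^2)$ unless some element has $x\m=\m^2$ with special structure. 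In the latter case we would produce a fiber-product decomposition by hand.
\end{itemize}

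\emph{Parts (2) and (3).} Let $M$ have infinite projective dimension. By Lemma \ref{227}(2c), we may first replace $M$ by a module that is locally free on the punctured spectrum, at the cost of a factor $h+1$. When $R$ has an isolated singularity, $h=0$ and no cost is incurred. Next, cut down by a maximal $M$- and $R$-regular sequence of length $u$ and use Lemma \ref{227}(2b) with $m=0$. This gives a depth-zero situation and a factor $u+1$. The advertised bounds $2u+3$ and $2u+4$ suggest a sharper bookkeeping via the triangles in the proof of Lemma \ref{227}(2b), with $n+m+1=u+1$ and level $\le1$ or $2$ at depth zero.

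For the depth-zero case, the hypothesis $\syz k\in\add(R\oplus\syz^2k)$ iterates to make $\syz k$ a summand of $\syz^{2j}k$ for all $j$. We then use the Koszul complex $\k(x,M)$ for some $x\in\m$: its cone triangle $M\to\k(x,M)\to M[1]$ places $\k(x,M)$ in level one. Since $\m$ annihilates $k$ up to syzygies, $k$ is a summand of an appropriate syzygy of $\k(x,M)\otimes$ something. This yields level at most $2$ in case (2) and $1$ in case (3), where the hypothesis is at the zeroth syzygy.

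\emph{Parts (4) and (5).} For (4a), $\widehat R=S/I$ with $I$ Burch. Lemma \ref{227}(2a) gives $k\in\langle\k(x,M)\rangle_1\subseteq\langle M\rangle_2$, so $\dx\le1$; this passes to $R$ since dominance is checked after completion. For (4b), apply Lemma \ref{227}(3b) to $\widehat R$ with the length-$u$ regular sequence, combined with (4a) and (2b) using $m=1$ for the Koszul element $x$. This gives $(u+2)\cdot1-1=u+1$. The second bound follows as in (3), using (2c) over $R$ directly. For (5), combine (1a) for $R/(\xx)$ with Lemma \ref{227}(3b): $\dx(R)\le(n+1)(0+1)-1=n$.
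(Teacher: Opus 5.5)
Your arguments for (1)(a)--(h), for the complete case of (4)(b), and for (5) agree with the paper. For (4)(b) the working route is Lemma~\ref{227}(2a) followed by Lemma~\ref{227}(2b) with $n=u$, $m=1$; Lemma~\ref{227}(3b) would only give $2u+1$.

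The first genuine gap is (1)(i). The ``dichotomy'' you rely on is false: it says that $\m^3=0$ and $\soc R\subseteq\m^2$ force $\P_k(t)=1/(1-et+rt^2)$ unless $R$ splits as a fiber product. Take $R=k[x,y]/(x^2,xy^2,y^3)$. Here $\m^3=0$ and $\soc R=\m^2=(xy,y^2)$, so (b) does not apply, and $e=r=2$. $R$ is not a nontrivial fiber product: $(ax+by)(cx+dy)=(ad+bc)xy+bd\,y^2$ vanishes only when $ad-bc=0$. Yet $\P_k(t)\ne1/(1-2t+2t^2)$, because the latter has coefficient $-4$ at $t^4$. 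So no reduction of (i) to (a) or (b) is possible.

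The missing idea is to argue with modules rather than with the structure of $R$. After reducing to $\soc R=\m^2$ via (b), take a nonfree $M$. Then $N=\syz M\ne0$ lies in $\m F$ for a free $F$, so $\m^2N=0$. Lescot's results \cite[Corollary 3.4(2) and Lemma 3.6]{L} show that if no $\syz^nN$ had $k$ as a direct summand, then $\P_k(t)$ would equal $1/(1-et+rt^2)$. Hence $k$ is a summand of some $\syz^{n+1}M$, and $\dx(R)\le0$.

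Parts (2), (3) and (4)(a) are not actually proved.

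In (2) and (3) there are several problems:
\begin{enumerate}
\item You never use that the module from Lemma~\ref{227}(2c) is free on the punctured spectrum.
\item The hypothesis on $\syz^{u+1}k$ (resp. $\syz^uk$) is over $R$ and does not obviously pass to $R/(\xx)$, which is what Lemma~\ref{227}(2b) would need.
\item Your count $(u+1)(\ell+1)-1$ does not reproduce the stated bounds anyway.
\item The depth-zero step (``$k$ is a summand of an appropriate syzygy of $\k(x,M)\otimes$ something'') is not an argument.
\end{enumerate}
The paper does not cut down at all. For (2)(b), and similarly (3)(b), it feeds $L$ from Lemma~\ref{227}(2c) into the proof of \cite[Theorem 4.7]{udim}, which yields $\syz^{u+1}k\in[\syz^{u+3}L]_{2u+3}$. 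Part (2)(a) is \cite[Corollary 5.5(1)]{udim}. For (3)(a), it uses \cite[Corollaries 1.10 and 1.15]{LW} to get $k$ as a direct summand of $\syz^2k$; then $R$ is Burch by \cite[Theorem 4.1]{burch}, and (3)(a) reduces to (4)(a).

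For (4)(a), your claim that ``dominance is checked after completion'' is unsupported; note that (4)(b) itself asserts the sharp bound only for $\widehat R$. The paper works over $R$ directly. \cite[Lemma 7.4]{burch} provides an exact sequence $0\to(\syz M)^{\oplus n}\to N\to M^{\oplus n}\to0$ with $\I_1(N)=\m$ and $\pd N=\infty$. Then \cite[Proposition 4.2]{burch} makes $k$ a summand of $\syz^2N$, so $k\in\langle M\rangle_2$.

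Finally, the bound $\dx(R)\le(h+1)(2u+4)-1$ in (4)(b) needs the fact \cite[Proposition 4.3]{udim} that a Burch ring satisfies $\syz^uk\in\add(R\oplus\syz^{u+2}k)$. After that it is exactly (3)(b), which you have not established.
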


\begin{proof}
(1) Note by Proposition \ref{p210}(1) that (b) and (c) are special cases of (a), while (d) and (g) are special cases of (b) and (f), respectively.
In view of Remark \ref{r24}(2), we may also assume $\edim R\ge2$.
In case (f), we may assume $0\ne J\subseteq\n$.
Since $\soc S/\n J=(\n J:\n)/\n J$ contains $J/\n J$, which is nonzero, we have $\depth R=0$.
By Lemma \ref{l217}(2a) the assertion in case (f) will follow once it is shown in case (e).
In case (h), we may assume $J:\n\ne\n$, and then the assertion will follow once it is shown in case (e).
Thus, it suffices to prove the assertion in cases (a), (e) and (i) only.

(a) Let $X$ be a nonzero object of $\ds(R)$.
Take an integer $n$ and an $R$-module $M$ such that $\pd M=\infty$ and $X\cong M[n]$.
Thanks to Lemma \ref{227}(1b), the maximal ideal $\m$ is a direct summand of $\syz^3 M\oplus \syz^4 M$.
Therefore, $k[-1]\cong\m \in\langle M[-3]\oplus M[-4]\rangle_{1}$ in $\ds(R)$.
It is seen that $k\in \langle X\rangle_{1}$.
We achieve the inequality $\dx(R)\le 0$.

(e) By using Lemma \ref{227}(1a) instead of Lemma \ref{227}(1b), the assertion is shown similarly to (a).

(i) The assumption $\m^3=0$ implies that $\soc R$ contains $\m^2$.
In view of (b), we may assume $\soc R=\m^2$.
Let $M$ be a nonfree $R$-module.
Then $N:=\syz M\ne0$ is contained in a direct sum of copies of $\m$.
Hence $N$ is killed by $\m^2$.
Since $\P_k(t)\ne\frac{1}{1-et+rt^2}$, we see from \cite[Corollary 3.4(2) and Lemma 3.6]{L} that $k$ is a direct summand of $\syz^nN$ for some $n>0$.
Therefore, $k$ belongs to $\langle M\rangle_1$ and we obtain the inequality $\dx(R)\le0$.

(2) Assertions (a) and (c) follow from \cite[Corollary 5.5(1)]{udim} and assertion (b), respectively.
To show (b), let $X$ be a nonzero object of $\ds(R)$.
Lemma \ref{227}(2c) yields an $R$-module $L$ with $\pd_RL=\infty$, $L\in \langle X\rangle_{h+1}^{\ds(R)}$, and such that $L_\p$ is $R_\p$-free for each $\p\in\spec R\setminus\{\m\}$.
The proof of \cite[Theorem 4.7]{udim} shows that $\syz^{u+1}k\in [\syz^{u+3}L]_{2u+3}$.
Therefore, $\syz^{u+1}k$ is in $\langle X\rangle_{(h+1)(2u+3)}$, and the inequality $\dx(R)\le (h+1)(2u+3)-1$ follows.

(3) Again, (c) follows from (b), while (b) is shown by a similar argument as in the proof of (2).
To show (a), suppose $u=0$.
We may assume that $R$ is singular.
By assumption, the $R$-module $k$ belongs to $\add(R\oplus \syz^2k)$.
Using \cite[Corollaries 1.10 and 1.15]{LW}, we see that $k$ is a direct summand of $\syz^2k$.
It follows from \cite[Theorem 4.1]{burch} that the local ring $R$ is Burch.
Therefore, the claim reduces to (a) in (4), which is shown just below.

(4) To show assertion (a), we may assume that the local ring $R$ is singular.
Fix a nonzero object $X$ of $\ds(R)$.
In view of Remark \ref{21}(1), there is an $R$-module $M$ with $\pd_RM=\infty$ such that $X$ is isomorphic in $\ds(R)$ to some shift of $M$.
According to \cite[Lemma 7.4]{burch}, there exists an exact sequence $0\to(\syz M)^{\oplus n}\to N\to M^{\oplus n}\to0$ with $n>0$, $\I_1(N)=\m$ and $\pd_RN=\infty$.
By \cite[Proposition 4.2]{burch} the residue field $k$ is a dierct summand of $\syz^2N$.
Hence, $k$ is in $\langle M\rangle_2=\langle X\rangle_2$ in $\ds(R)$.
Thus we obtain $\dx(R)\le1$, and (a) follows.

In what follows, we prove (b).
First, we assume that $R$ is complete.
By definition, there is an $R$-sequence $\xx$ such that $R/(\xx) \cong S/I$, where $S$ is a regular local ring and $I$ is a Burch ideal of $S$.
Let $M$ be a nonzero object of $\ds(R)$.
Using Remark \ref{21}, we may assume that $M$ is an $R$-module of infinite projective dimension and that $\xx$ is an $M$-sequence.
The $R/(\xx)$-module $M/\xx M$ is nonfree by \cite[Lemma 1.3.5]{BH}.
Applying Lemma \ref{227}(2a) to $M/\xx M$, we find an element $\overline y\in R/(\xx)$ such that $k$ belongs to $\langle C\rangle_1^{\ds(R/(\xx))}$, where $C=\k(\overline y,M/\xx M)$.
In other words, we have $\level^C_{\ds(R/(\xx))}(k)\le 0$.
Applying Lemma \ref{227}(2b) to the sequence $\xx, y$ in $R$, we see that $\level^M_{\ds(R)}(k)\le (u+2)(\level_{\ds(R/(\xx))}^C(k)+1)-1=u+1$.
This shows the inequality $\dx(R) \le u+1$.

Now we consider the case where $R$ is not necessarily complete.
We may assume $R$ is not regular.
By \cite[Proposition 4.3]{udim}, the module $\syz^uk$ is in $\add(R\oplus \syz^{u+2}k)$.
Apply (b) in (3) to get $\dx(R)\le(h+1)(2u+4)-1$.

(5) Take an $R$-sequence $\xx=x_1,\dots,x_n$ such that $R/(\xx)$ is a nontrivial fiber product over $k$.
Assertion (1a) implies that $\dx(R/(\xx))\le0$.
Applying Lemma \ref{227}(3b), we get $\dx(R)\le(n+1)(0+1)-1=n$.
\end{proof}

As a corollary of Proposition \ref{t220}(4b), we are able to recover \cite[Lemma 4.18]{BFK}.

\begin{cor}[Ballard--Favero--Katzarkov]\label{34}
If $R$ is a complete local hypersurface, then $\dx(R)\le \dim R+1$.
\end{cor}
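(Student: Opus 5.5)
The plan is to derive Corollary \ref{34} directly from Proposition \ref{t220}(4b). That proposition says that a Burch ring $R$ of depth $u$ satisfies $\dx(\widehat R)\le u+1$. So it suffices to check two things: a complete local hypersurface is a Burch ring, and its depth equals its dimension. Since $R$ is complete, $\widehat R=R$, and the bound reads $\dx(R)\le\depth R+1=\dim R+1$.

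\emph{Step 1: $R$ is Cohen--Macaulay.} Write $R\cong S/(f)$ with $(S,\n)$ a regular local ring. We may assume $R$ is not regular, since otherwise $\dx(R)=-1$ and the bound is trivial. Then $0\ne f\in\n^2$, and $f$ is $S$-regular because $S$ is a domain. Hence $R$ is Cohen--Macaulay with $\depth R=\dim R=\dim S-1$; call this common value $u$.

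\emph{Step 2: $R$ is Burch.} Choose a maximal $R$-sequence $\xx=x_1,\dots,x_u$ whose elements are part of a regular system of parameters of $S$ modulo $f$. Concretely, lift each $x_i$ to an element of $\n\setminus\n^2$ so that the lifts, together with suitable further elements, form a regular system of parameters of $S$; this uses a prime-avoidance choice in $\n\setminus\n^2$. Then $S':=S/(\xx)$ is a regular local ring of dimension $1$, that is, a DVR, and $R/(\xx)\cong S'/(\overline f)$ with $\overline f\ne0$, since $R/(\xx)$ is artinian and not the zero ring.

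Write $\overline f=t^n$, where $t$ is a uniformizer. Then $I=(t^n)$ satisfies
\[
\n(I:\n)=(t)(t^{n-1})=(t^n)\ne(t^{n+1})=\n I,
\]
so $I$ is a Burch ideal of $S'$. Hence $R$ is Burch by definition. Alternatively, one can simply cite the introduction's remark that a hypersurface is a typical Burch ring.

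The only step needing care is the choice in Step 2 of a regular sequence $\xx$ whose quotient is again of the form regular$/$principal with the regular ring of dimension $1$. This is standard: extend $f$-regularity by prime avoidance, working with elements of $\n\setminus\n^2$. With both facts in hand, Proposition \ref{t220}(4b) applies and gives $\dx(R)\le\dim R+1$.
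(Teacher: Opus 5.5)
Your proposal is correct and follows the paper's route: the corollary is read off from Proposition \ref{t220}(4b), using that a complete local hypersurface $R$ is Cohen--Macaulay (so $\depth R=\dim R$), Burch, and equal to its completion. Your explicit reduction to $S'/(t^n)$ over a discrete valuation ring just verifies the Burch property, which the paper takes as known (cf.\ the introduction and Remark \ref{r24}(2)).
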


Here are a couple of facts to mention concerning the above proposition and corollary.

\begin{rem}\label{23}
\begin{enumerate}[(1)]
\item
Sj\"odin \cite{Sj} proves that if $(R,\m)$ is a Gorenstein local ring with $\m^3=0\ne\m^2$ and $e=\edim R\ge2$, then $\P_k(t)=\frac{1}{1-et+t^2}$; this also follows from the results of Avramov, Iyengar and \c{S}ega \cite[Theorems 4.1 and 4.6]{AIS}, which say that such a ring $R$ is {\em Koszul} in the sense of Herzog and Iyengar \cite{HI}.
Thus, in the situation of Proposition \ref{t220}(1i), if $R$ is Gorenstein, then $R$ is a hypersurface or $\m^2=0$.
\item
The upper bounds of dominant indices obtained in \cite[Corollary 5.5]{udim} are improved by (2) and (3) of Proposition \ref{t220}.
Note that, as we explain in the introduction, a Burch ring and a quasi-fiber product ring are known to be uniformly dominant, and explicit upper bounds are given in \cite[Corollary 5.5]{udim}.
Parts (4) and (5) of Proposition \ref{t220} improve these upper bounds.
\item
Corollary \ref{34} is the same as \cite[Lemma 4.18]{BFK} except that the latter assumes that the ring contains an algebraically closed field of characteristic zero, which does not seem to be used there.
\end{enumerate}
\end{rem}

We record an example of an artinian non-Gorenstein Burch ring that has dominant index exactly one.

\begin{ex}\label{32}
Let $k$ be a field and put $R=k[x,y]/(x^2,xy^2,y^4)$.
Then $R$ is an artinian non-Gorenstein Burch ring with $\Burch(R)=1$ by Example \ref{ex34}.
Consider the cyclic $R$-module $M=R/(x,y^2)$.
We see that $\syz M\cong M^{\oplus 2}$ and that $k$ is not a direct summand of $\syz^iM$ for all $i\ge0$; see \cite[Example 4.5]{DE} again.
Since $\edim R>1$, the ideal $\I_1(\syz^r k)$ coincides with the maximal ideal $\m$, thanks to the construction of a minimal free resolution of $k$ due to Tate \cite{Tate} and Gulliksen \cite{G}; see \cite[pp.\,5--6]{CCHK}.
By \cite[Proposition 4.2]{burch} the residue field $k$ is a direct summand of $\syz^2(\syz^r k)$.
In view of this and the fact that $k$ is indecomposable, we observe that $\syz^r k$ is not a direct summand of $\bigoplus_{i=0}^s\syz^iM^{\oplus n_i}$ for all $r,s,n_0,\dots,n_s\ge0$.
By the claim and Proposition \ref{218}, we see that $\dx(R)>0$.
Using Proposition \ref{t220}(4a), we get $\dx(R)=1$.
\end{ex}

The next proposition contains upper bounds for the dominant index of a Cohen--Macaulay local ring with small multiplicity.
Assertion (4) of the proposition below supports \cite[Question 9.2]{dlr} in the affirmative, which asks whether or not Tor/Ext-friendliness implies dominance.

\begin{prop}\label{p41}
Put $d=\dim R$, $c=\codim R$ and $r=\r(R)$.
Assume either that $d=0$ or that $R$ is Cohen--Macaulay with $k$ infinite.
Then the following statements hold true.
\begin{enumerate}[\rm(1)]
\item The local ring $R$ is a Burch ring with $\dx(R)\le d$ in each of the following cases:\\
{\rm(a)} $\e(R) \le 3$.\qquad
{\rm(b)} $R$ has minimal multiplicity (i.e., $\e(R)=c+1$).\qquad
{\rm(c)} $c=2$ and $\e(R) < r(r+1)$.\\
{\rm(d)} $R$ is non-Gorenstein with $\e(R) \le 5$.
\item The local ring $R$ is a Burch ring with $\dx(R)\le d+1$ in each of the following cases:\\
{\rm(a)} $R$ is non-Gorenstein and G-regular with $\e(R)=6$.\qquad
{\rm(b)} $c=2$ and $\e(R) < 2r(r+1)$.\\
{\rm(c)} $R$ is non-Gorenstein with $c=2$ and $\e(R)\le 11$.
\item
If $\e(R)\le c+2$ and $\P_k^R(t)\ne\frac{(1+t)^d}{1-ct+rt^2}$, then the local ring $R$ is uniformly dominant such that $\dx(R)\le d$.
\item
If $R$ is non-Gorenstein with $\e(R)\le6$, then the following conditions on $R$ are equivalent:\\
{\rm(a)} uniformly dominant,\quad
{\rm(b)} dominant,\quad
{\rm(c)} Tor-friendly,\quad
{\rm(d)} Ext-friendly,\quad
{\rm(e)} G-regular.
\end{enumerate}
\end{prop}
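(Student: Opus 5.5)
The common strategy is to reduce to dimension zero. Since $k$ is infinite and $R$ is Cohen--Macaulay, I pass to $\widehat R$ (the singularity category and the conditions in question behave well under completion; I will argue with $\widehat R$ and note that the conclusions transfer). There I choose a maximal $R$-sequence $\xx=x_1,\dots,x_d$ that is superficial, i.e., generates a minimal reduction of $\m$. Then $\overline R=R/(\xx)$ is artinian with
\[
\ell(\overline R)=\e(R),\qquad \edim\overline R=c,\qquad \r(\overline R)=r .
\]
Moreover $\P_k^R(t)=(1+t)^d\P_k^{\overline R}(t)$, since the $x_i$ can be taken outside $\m^2$. Burchness of $R$ is by definition read off from such a quotient. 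For the index bounds, if $\overline R$ is a nontrivial fiber product then Proposition \ref{t220}(5) gives $\dx(R)\le d$. If $\overline R$ is Burch, Proposition \ref{t220}(4b) gives $\dx(\widehat R)\le d+1$.

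Part (1) goes through the artinian results one case at a time.
\begin{enumerate}[(a)]
\item If $\e(R)\le3$, then $\overline R$ has length at most $3$. So either $\m_{\overline R}^2=0$, or $\edim\overline R=1$, or $\overline R$ is stretched. The hypersurface case needs separate care: there I would use a direct argument that $\dx\le d$ for hypersurfaces of multiplicity at most $3$, or get it from (b).
\item Minimal multiplicity means $\m_{\overline R}^2=0$. Then Proposition \ref{t220}(1d) shows that $\overline R$ is a nontrivial fiber product when $c\ge2$, and hence $\dx(R)\le d$.
\item Here Theorem \ref{t212}(2) gives $\Burch(\overline R)=2$, hence $\dx(\overline R)\le0$ by Proposition \ref{t220}(1e). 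To lift this I would want a fiber-product structure. Failing that, I would use (2b) of Lemma \ref{227} together with the Burch-index-two statement (1a) of Lemma \ref{227} directly on $M/\xx M$. The cost is only the $d$ regular elements, which gives $\dx(R)\le d$.
\item Corollary \ref{18}(2) gives $\Burch(\overline R)=\edim\overline R$. If the embedding dimension is at least $2$, this is handled like (c). Otherwise $\overline R$ is a hypersurface, and that case is excluded by non-Gorensteinness.
\end{enumerate}

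Part (2) uses Burchness of $\overline R$ together with Proposition \ref{t220}(4b).
\begin{itemize}
\item Case (a): Corollary \ref{20} applies, with G-regularity passing to $\overline R$ via the deformation.
\item Case (b): Theorem \ref{t212}(1) applies.
\item Case (c): Corollary \ref{18}(1) applies.
\end{itemize}

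For part (3), the condition $\e(R)\le c+2$ forces $\ell(\overline R)\le c+2$. So $\m_{\overline R}^3=0$ and the Hilbert function of $\overline R$ is $(1,c)$ or $(1,c,1)$. The Poincaré series condition descends to
\[
\P_k^{\overline R}(t)\ne\frac{1}{1-ct+rt^2}.
\]
Proposition \ref{t220}(1i) then gives $\dx(\overline R)\le0$. Inspecting its proof, $k$ actually appears as a direct summand of syzygies. To lift the bound I would combine this with Lemma \ref{227}(2b) with $m=0$, which yields $\dx(R)\le d$.

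Part (4) combines (1d), (2a) and the hierarchy in Remark \ref{r312}(5). Uniformly dominant implies dominant, which implies Tor-friendly, which implies Ext-friendly, which implies G-regular. G-regular together with $\e(R)\le6$ and non-Gorenstein gives Burch by (1d) and (2a), and Burch gives uniformly dominant.

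The main obstacle I expect is in (2a) and (4): showing that G-regularity of $R$ descends to $\overline R=R/(\xx)$, so that Corollary \ref{20} applies. This descent is known for regular sequences, since totally reflexive modules lift along them. I would cite \cite{greg} for it. A second concern is whether the hypothesis $\e(R)\le c+2$ in (3) really makes the Poincaré series hypothesis match that of $\overline R$; for this I need the $x_i$ to be chosen outside $\m^2$.
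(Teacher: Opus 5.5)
Your route is the paper's. You cut down by a minimal reduction $\xx$, whose elements are part of a minimal generating set of $\m$, so length, embedding dimension, type and Poincar\'e series of $\overline R=R/(\xx)$ behave as you say. You then apply the artinian results and lift. In (1) and (3) the lifting is just Lemma~\ref{227}(3b): $\dx(\overline R)\le0$ gives $\dx(R)\le d$, so no fiber-product structure is needed in (1c).

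\textbf{The gap: passing to the completion.} You assert that the conclusions transfer from $\widehat R$ to $R$, but in Part (2) what you actually get from Proposition~\ref{t220}(4b) is only $\dx(\widehat R)\le d+1$.
\begin{itemize}
\item Nothing in the paper gives $\dx(R)\le\dx(\widehat R)$, and Proposition~\ref{t220}(4b) itself only obtains $(h+1)(2u+4)-1$ for $R$.
\item The inequality is not formal either: the objects of $\ds(\widehat R)$ used to build $k$ from $X\otimes_R\widehat R$ need not come from $\ds(R)$.
\end{itemize}
The detour is unnecessary. With $k$ infinite, the minimal reduction exists in $R$. Since $\overline R$ is artinian, it is complete, so $\overline R\cong S/I$ with $I$ a Burch ideal by Lemma~\ref{l25}(2). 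You can therefore run the argument of Proposition~\ref{t220}(4b) over $R$ itself:
\begin{itemize}
\item Take $M$ with $\pd_RM=\infty$ and $\xx$ regular on $M$ (replace $M$ by $\syz^dM$).
\item Lemma~\ref{227}(2a) yields $\overline y$ with $k\in\langle\k(\overline y,M/\xx M)\rangle_1^{\ds(\overline R)}$.
\item Lemma~\ref{227}(2b) with $n=d$, $m=1$ then gives $\level^M_{\ds(R)}(k)\le d+1$.
\end{itemize}
This is exactly what the paper means by ``the proof of Proposition~\ref{t220}(4b) works''.

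\textbf{Two smaller points.}

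First, in (1a) you leave open the case $\overline R\cong A/(z^3)$ with $A$ a discrete valuation ring, and your fallback fails. This ring is not of minimal multiplicity ($\e=3\ne c+1$), and the general hypersurface bounds give only $d+1$ or worse. You need the paper's direct check. The indecomposable $\overline R$-modules are $\overline R$, $k$ and $\overline R/(z^2)$, and $\syz(\overline R/(z^2))\cong(z^2)\cong k$. Hence $\dx(\overline R)=0$, and Lemma~\ref{227}(3b) finishes.

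Second, in (2a) and (4), G-regularity does not descend along arbitrary regular sequences. Modulo a regular $x\in\m^2$ one obtains an embedded deformation, which is never G-regular by Remark~\ref{r312}(3). It does descend modulo a regular $x\in\m\setminus\m^2$. If $N$ is totally reflexive over $R/(x)$, then $\syz_RN$ is totally reflexive, hence free. Since $\syz_RN/x\syz_RN\cong N\oplus\syz_{R/(x)}N$, it follows that $N$ is free. Your choice of $\xx$ satisfies this condition, so those parts are fine once the justification is stated this way; the paper uses this descent implicitly.
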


\begin{proof}
Putting $e=\edim R$, we have $c=e-d$.
By assumption, there is a system of parameters $\xx=x_1,\dots,x_d$ of $R$ with $(\xx)$ a reduction of $\m$.
Then $\e(R)=\ell(R/(\xx))$ and $\codim R/(\xx)=c$ by \cite[Proposition 8.3.3(2)]{HS}.

(1) From the definition of a Burch ring we see that if $R/(\xx)$ is Burch ring, then so is $R$.
In view of Lemma \ref{227}(3), replacing $R$ with $R/(\xx)$, we may assume $d=0$, and hence $\e(R)=\ell(R)$.
If $R$ is a field, then it is Burch and $\dx(R)=-1$.
In what follows, we assume that $R$ is not a field.
In case (b), it holds that $\m^2=0$, and hence $R$ is Burch with $\dx(R)=0$ by Remark \ref{r24}(1) and Proposition \ref{t220}(1d).
Cases (c) and (d) are consequences of Theorem \ref{t212}(2), Corollary \ref{18}(2) and Proposition \ref{t220}(1e).
Now we consider case (a).
As $\ell(R)\le 3$, we see that $R$ is isomorphic to $A/(z^3)$ or $A/(z^2)$ or $B/(v^2,vw,w^2)$, where $A$ is a discrete valuation ring with a uniformizer $z$, and $B$ is a regular local ring with a regular system of parameters $v,w$.
The ring $R$ is a hypersurface in the first and second cases, while $R$ has minimal multiplicity in the second and third cases.
Remark \ref{r24} shows that $R$ is Burch.
Also, in the first case, $R$, $R/zR=k$ and $R/z^2R=R/\soc R$ are all the nonisomorphic indecomposable $R$-modules.
We see that $k\in\langle X\rangle_1$ in $\ds(R)$ for all $0\ne X\in\ds(R)$, and hence $\dx(R)=0$.
In the second and third cases, we have $\m^2=0$.
Proposition \ref{t220}(1d) shows $\dx(R)=0$.

(2) Corollaries \ref{18}(1), \ref{20} and Theorem \ref{t212}(1) show $R/(\xx)\cong S/I$, where $S$ is a regular local ring and $I$ is a Burch ideal.
Hence $R$ is Burch.
Moreover, the proof of Proposition \ref{t220}(4b) works, and we get $\dx(R)\le d+1$.

(3) It holds that $\e(R)+d=\ell(R/(\xx))+\ell((\xx)/\xx\m)=\ell(R/\m)+\ell(\m/\m^2)+\ell(\m^2/\xx\m)=1+e+\ell(\m^2/\xx\m)$.
Since $\e(R)\le c+2=e-d+2$, we get $\ell(\m^2/\xx\m)\le1$, so $\m(\m^2/\xx\m)=0$ and $(\m/(\xx))^3=0$.
In view of \cite[Proposition 8.3.3(2)]{HS}, there exist elements $y_1,\dots,y_{e-d}\in\m$ such that $\m=(x_1,\dots,x_d,y_1,\dots,y_{e-d})$.
We see that $\edim R/(x_1,\dots,x_i)=e-i$ and $x_{i+1}\notin(\m/(x_1,\dots,x_i))^2$ for each $0\le i\le d$.
Applying \cite[Proposition 3.3.5(1)]{A} repeatedly, we observe that $\P_k^{R/(\xx)}(t)\ne\frac{1}{1-ct+rt^2}$.
Note that $\edim R/(\xx)=c$ and $\r(R/(\xx))=r$.
Then the inequality $\dx(R/(\xx))\le0$ follows from Proposition \ref{t220}(1i).
Finally, using Lemma \ref{227}(3b), we obtain the desired inequality $\dx(R)\le(d+1)(0+1)-1=d$.

(4) The implications (a)$\Rightarrow$(b) and (d)$\Rightarrow$(e) are obvious.
The implications (b)$\Rightarrow$(c)$\Rightarrow$(d) hold by \cite[Theorem 1.1(5)]{dlr} and \cite[Proposition 5.5]{AINS}.
The implication (e)$\Rightarrow$(a) follows from (2) and (3).
\end{proof}

\begin{rem}
The assumptions in (1a) and (1d) of Proposition \ref{p41}(1a) that $\e(R)\le3$ and that $R$ is non-Gorenstein respectively are necessary.
Indeed, let $R=k[x,y]/(x^2,y^2)$ with $k$ a field.
Then $\e(R)=\ell(R)=4$.
As $R$ is a complete intersection that is not a hypersurface, it is not dominant by \cite[Theorem 1.1(4)]{dlr}.
\end{rem}

We seek for an upper bound of the dominant index of a Cohen--Macaulay local ring of finite representation type in the proposition given after recalling the definition below.

\begin{dfn}
\begin{enumerate}[(1)]
\item
A Cohen--Macaulay local ring $R$ is said to have {\em finite representation type} if there exist only finitely many isomorphism classes of indecomposable maximal Cohen--Macaulay $R$-modules.
\item
For an $R$-module $M$, we denote by $\add M$ the subcategory of $\mod R$ consisting of direct summands of finite direct sums of copies of $M$.
\item
When $R$ is Cohen--Macaulay, $\ocm(R)$ stands for the subcategory of $\mod R$ consiting of modules $M$ that fits into an exact sequence $0\to M\to F\to N\to0$ in $\mod R$ with $F$ free and $N$ maximal Cohen--Macaulay.
\end{enumerate}
\end{dfn}

\begin{prop}\label{14}
Let $(R,\m,k)$ be a $d$-dimensional Cohen--Macaulay local ring.
The assertions below hold.
\begin{enumerate}[\rm(1)]
\item
\begin{enumerate}[\rm(a)]
\item
Let $d=0$.
If $R$ has finite representation type, then $R$ is uniformly dominant with $\dx(R)\le1$.
\item
Let $d=1$ and $|k|=\infty$.
If $R$ has finite representation type, $R$ is uniformly dominant with $\dx(R)\le1$. 
\item
Let $d=2$, and assume that $k$ is algebraically closed.
Let $R$ be an excellent henselian normal domain.
Suppose that there exists an $R$-module $M$ such that $\ocm(R)=\add M$ (this assumption is satisfied if $R$ has finite represenation type).
Then $R$ is a uniformly dominant local ring such that $\dx(R)\le 2$.
\end{enumerate}
\item
Assume that $R$ is an excellent henselian local ring with $d\le2$ and such that $k$ is algebraically closed.
If $R$ has finite representation type, then $R$ is a uniformly dominant local ring such that $\dx(R)\le 2$.
\end{enumerate}
\end{prop}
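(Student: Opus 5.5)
I would prove each case by using the characterization of $\dx(R)$ in Proposition \ref{218}. Given a module $N$ with $\pd N=\infty$, the task is to produce a syzygy of $k$ in $[N]_{n+1}$ for the appropriate $n$.

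\emph{Case (1a), $d=0$.} The ring $R$ is artinian of finite representation type, so there are only finitely many indecomposable modules $X_1,\dots,X_t$.

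\begin{itemize}
\item If $R$ is a hypersurface, it is Burch, and Proposition \ref{t220}(4a) gives $\dx(R)\le1$ directly.
\item Otherwise I would use the known classification: artinian rings of finite representation type have $\edim R\le2$. Commutative artinian rings of finite type are either principal ideal rings (hence hypersurfaces) or, when $\edim R=2$, have $\m^2$ small; by Auslander--Reiten/Gustafson they are $k[x,y]/(x^2,xy,y^n)$-type rings or $\m^2=0$. The easiest route is to show that such a non-hypersurface ring satisfies $\soc R\not\subseteq\m^2$ or $\m^2=0$, and then invoke Proposition \ref{t220}(1b),(1d).
\end{itemize}

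If I want to avoid the classification, I would argue as follows. By finiteness, some $\syz^ik$ repeats up to summands: since $\syz^{i}k$ has only finitely many possible indecomposable summands, $\syz k$ lies in $\add(\bigoplus_j\syz^{j}k)$ for $j\ge2$. The goal is then to land in the hypothesis of Proposition \ref{t220}(3a), namely $k\in\add(R\oplus\syz^2k)$. I expect this step to be the main obstacle, and I would fall back on the classification.

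\emph{Case (1b), $d=1$.} I would pick a reduction $x$ of $\m$, using that $k$ is infinite. Finite representation type in dimension one forces $\e(R)\le3$, by Drozd--Roiter and Green--Reiner ("multiplicity at most three" for finite CM type). Proposition \ref{p41}(1a) then gives a Burch ring with $\dx(R)\le d=1$.

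\emph{Case (1c), $d=2$.} Here the normal domain has an isolated singularity. Since $\ocm(R)=\add M$, I would take an arbitrary $N$ with $\pd N=\infty$. By Lemma \ref{227}(2c) with $h=0$ (isolated singularity), I may assume $N$ is locally free on the punctured spectrum, and replacing $N$ by $\syz^2N$ I may assume $N$ is maximal Cohen--Macaulay. The key step is to show $\syz^uk\in\add(R\oplus\syz^{u+2}k)$ with $u=2$, and then apply Proposition \ref{t220}(3c). That would only give $2u+3=7$, which is too weak.

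Instead I would use the Auslander--Reiten sequence ending in the fundamental module: there is an exact sequence $0\to\omega\to E\to\m\to0$ with $E$ in $\add M$. By the McKay/Auslander correspondence every indecomposable MCM module is a summand of $\syz^2$ of something built from $N$ in one extension, since $N\ne0$ non-free reaches the fundamental sequence. This gives $\syz^2k=\syz\m$ in $[N]_3$, hence $\dx\le2$. Concretely I would apply the functor $\lhom(N,-)$ to the Auslander--Reiten translate chain.

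\emph{Part (2).} I would split by $d\in\{0,1,2\}$. For $d=0,1$, part (1a) or (1b) applies; when $d=1$ and $k$ is algebraically closed, $k$ is infinite. For $d=2$, finite representation type with $k$ algebraically closed implies normality (an isolated singularity, by Auslander), and then $\ocm(R)=\add M$ with $M$ the sum of the indecomposable MCM modules and their syzygies, so (1c) applies.
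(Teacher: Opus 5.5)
Your treatment of (1c), and hence of the case $d=2$ in (2), has a genuine gap. The Auslander--Reiten sketch is not an argument. The claim that, for an arbitrary nonfree $N$, every indecomposable maximal Cohen--Macaulay module is a summand of $\syz^2$ of an object of $[N]_2$ does not follow from the McKay/Auslander correspondence, and you give no reason for it. The bookkeeping is also inconsistent. If that claim held, then $\syz^2k$, which is maximal Cohen--Macaulay since $d=2$, would already lie in $[N]_2$. That would give $\dx(R)\le1$, which the paper explicitly leaves open as a question right after this proposition. Moreover, the hypothesis $\ocm(R)=\add M$ does no real work in your sketch.

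The missing idea is to use that hypothesis to control the multiplicity, not to build $k$ by hand. For a two-dimensional excellent henselian normal domain with algebraically closed residue field, $\ocm(R)=\add M$ forces $R$ to have a rational singularity. By Artin, a rational surface singularity has minimal multiplicity, $\e(R)=\codim R+1$. Then Proposition~\ref{p41}(1b) applies directly: modulo a minimal reduction $\xx$ one has $(\m/(\xx))^2=0$, so $\dx(R/(\xx))\le0$, and Lemma~\ref{227}(3b) gives $\dx(R)\le d=2$. You were right that going through Proposition~\ref{t220}(3c) only yields $7$.

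In (1a), the classification you recall is wrong. A commutative artinian local ring of finite representation type is a principal ideal ring, i.e.\ a hypersurface. For instance, $k[x,y]/(x^2,xy,y^n)$ surjects onto $k[x,y]/(x,y)^2$, which already has infinitely many indecomposable modules (the Kronecker families), so it has infinite type. Your conclusion survives only because your ``otherwise'' branch is empty. The proof is simply: $R$ is a hypersurface, hence Burch, hence $\dx(R)\le1$ by Proposition~\ref{t220}(4a).

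Your (1b) is the paper's argument: finite type gives analytic unramifiedness and $\e(R)=\nu_R(\overline R)\le3$, and then Proposition~\ref{p41}(1a) applies. Your reduction of (2) to (1a)--(1c), via isolated singularity and hence normality when $d=2$, also matches the paper. So once (1c) is repaired as above, the whole statement follows.
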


\begin{proof}
(1a) By \cite[Theorem 3.3]{LW} the ring $R$ is a hypersurface.
Apply Proposition \ref{t220}(4a) and Remark \ref{r24}(2).

(1b) The ring $R$ is analytically unramified by \cite[Proposition 4.15]{LW}, and in particular, $R$ is reduced.
The integral closure $\overline R$ of $R$ in its total quotient ring is module-finite over $R$ by \cite[Theorem 4.6(i)]{LW}.
It follows from \cite[Theorem A.29(iv)]{LW} that $\e(R)=\nu_R(\overline R)$.
If $\nu_R(\overline R)\ge4$, then $R$ has infinite representation type by \cite[Theorem 4.2]{LW}, a contradiction.
We must have $\e(R)\le3$.
Proposition \ref{p41}(1a) implies that $\dx(R)\le1$.

(1c) Under the assumption, $R$ has a rational singularity by \cite[Corollary 3.3]{ncr}, and hence $R$ has minimal multiplicity by \cite[Corollary 6]{Ar}; see also \cite[Proposition 3.8]{ST}.
Proposition \ref{p41}(1b) yields $\dx(R)\le 2$.

(2) When $d=0$, it follows from (1a) that the inequality $\dx(R)\le1$ holds.
Since $R$ has finite representation type, it has an isolated singularity by \cite[Theorem 7.12]{LW}.
Therefore, when $d=2$, the ring $R$ is normal, and the assertion is a special case of (1c).
As $R$ is excellent,the completion $\widehat R$ has an isolated singularity as well.
Hence when $d=1$, the ring $R$ is analytically unramified, and we get the inequality $\dx(R)\le1$ by (1b).
\end{proof}

\begin{ques}
Under the situation of Proposition \ref{14}(1c), can one prove that the inequality $\dx(R)\le1$ holds?
If so, the inequality in the conclusion of Proposition \ref{14}(2) can also be refined to $\dx(R)\le1$.
\end{ques}

\begin{rem}
The Eisenbud--Herzog conjecture \cite[Conjecture 7.21]{LW} asserts that if $R$ is a Cohen--Macaulay local ring with $\dim R\ge2$ and has finite represenation type, then $R$ has minimal multiplicity.
Hence, if the conjecture is true, combining it with Propositions \ref{14}(2) and \ref{p41}(1b) will imply that every excellent henselian Cohen--Macaulay local ring of finite representation type with algebraically closed residue field is uniformly dominant with dominant index at most $\dim R$.

In fact, there are only two known examples of a Cohen--Macaulay non-hypersurface local ring of finite represenation type, both of which have dimension three and satisfy the Eisenbud--Herzog conjecture (i.e., have minimal multiplicity); see \cite[\S7.1]{S}.
Thus, all the known examples of a Cohen--Macaulay local ring $R$ of finite representation type are uniformly dominant with dominant index at most $\dim R$.
\end{rem}

In the rest of this section, we consider the dominant index of a Cohen--Macaulay local ring of codimension two which is not necessary Burch.
We prepare a lemma to show the existence of a certain general element.

\begin{lem}\label{2}
Let $(R,\m,k)$ be a local ring.
Then the following statements hold true.
\begin{enumerate}[\rm(1)]
\item
Let $U$ be a subset of $R$ such that for any two distinct elements $u,v\in U$ the element $u-v$ is a unit of $R$.
Let $a,b\in R$ and assume that $a$ is $R$-regular.
Put $f(t)=a-bt\in R[\![t]\!]$.
Let $\p_1,\dots,\p_n$ be the associated prime ideals of $R$.
For every $x\in\m\setminus\m^2$ it holds that $\#\{u\in U\mid f(ux)\in\bigcup_{i=1}^n\p_i\}\le n$.
\item
Suppose that $k$ is infinite.
Then there exists an infinite set of units of $R$ such that for any two distinct elements $u,v\in U$ the element $u-v$ is again a unit of $R$.
\end{enumerate}
\end{lem}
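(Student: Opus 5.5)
Part (1) comes down to a counting argument. Fix $x\in\m\setminus\m^2$. Suppose, for a contradiction, that there are $n+1$ distinct elements $u_0,\dots,u_n\in U$ with $f(u_jx)=a-bu_jx\in\bigcup_{i}\p_i$ for every $j$. By pigeonhole, two of them, say $u\ne v$, have $f(ux)$ and $f(vx)$ in the same associated prime $\p=\p_i$. Subtracting gives $(v-u)bx\in\p$. Since $v-u$ is a unit, $bx\in\p$, and then $a=f(ux)+ubx\in\p$. But $a$ is $R$-regular, so it lies in no associated prime of $R$, a contradiction. Hence at most $n$ elements of $U$ are bad.

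The hypothesis $x\in\m\setminus\m^2$ is not really needed for this argument. Any $x\in R$ works, because only the difference $(v-u)bx$ is used. I would also note that $f(ux)$ is an honest element of $R$: $f$ is a polynomial of degree one in $t$, so substitution makes sense.

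For part (2), let $k=R/\m$ be infinite and choose a set $\Lambda\subseteq k^\times$ that is infinite (for instance $k^\times$ itself). Pick one lift $u_\lambda\in R$ of each $\lambda\in\Lambda$ and set $U=\{u_\lambda\}$. Each $u_\lambda$ maps to a nonzero element of the residue field, so it is a unit of the local ring $R$. For $\lambda\ne\mu$, the residue of $u_\lambda-u_\mu$ is $\lambda-\mu\ne0$, so $u_\lambda-u_\mu\notin\m$ and is a unit. Distinct $\lambda$ give distinct $u_\lambda$, so $U$ is infinite.

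Neither step is a real obstacle. The only points needing care in (1) are recalling that a regular element avoids every associated prime, and that $R$ has finitely many associated primes, so the pigeonhole count is valid.
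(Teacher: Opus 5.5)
Your proof is correct and takes essentially the same approach as the paper. The paper shows directly that each $\p_i$ contains $f(ux)$ for at most one $u\in U$: since $a\notin\p_i$ forces $bx\notin\p_i$, the differences $(u-v)bx$ avoid $\p_i$. That is your pigeonhole contradiction run forwards; part (2) is the same choice of lifts of the nonzero residues, and you are right that $x\in\m\setminus\m^2$ is never used.
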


\begin{proof}
(1) Fix $1\le i\le n$.
Let $u\in U$ and assume $f(ux)\in\p_i$.
We have $f(ux)=a-bux$, while $a\notin\p_i$ as $a$ is $R$-regular.
Hence $bx\notin\p_i$.
For every element $v\in U$ with $v\ne u$, we get $f(vx)-f(ux)=(u-v)bx\notin\p_i$ since $u-v$ is a unit by the assumption on $U$.
It follows that $f(vx)\notin\p_i$.
This argument shows that $\#\{u\in U\mid f(ux)\in\p_i\}\le1$, and we observe that $\#\{u\in U\mid f(ux)\in\bigcup_{i=1}^n\p_i\}\le n$.

(2) Let $U$ be the set of complete representatives in $S$ of the nonzero elements of the residue field $S/\n=k$.
Then by the assumption that $k$ is infinite, we observe that $U$ is such a set as in the assertion.
\end{proof}

The following theorem is the main result of this section.
Among other things, a Cohen--Macaulay local ring of codimension two with an infinite residue field is uniformly dominant unless it is a complete intersection.

\begin{thm} \label{t614}
\begin{enumerate}[\rm(1)]
\item
Let $(R,\m,k)$ be an artinian local ring with $\edim R\le2$ and $k$ infinite.
Then $R$ is either a complete intersection with $\edim R=2$, or a uniformly dominant ring with $\dx(R)\le5$.
\item
Let $(R,\m,k)$ be a Cohen--Macaulay local ring with $\dim R=d$, $\codim R\le2$ and $|k|=\infty$.
Then $R$ is either a complete intersection with $\codim R=2$, or uniformly dominant with $\dx(R)\le6d+5$.
\end{enumerate}
\end{thm}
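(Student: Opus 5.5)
The plan is to prove (1) by deforming a non-Burch ring to a one-dimensional Burch ring. Then (2) follows from (1) by cutting down by a general regular sequence.

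For (1), if $\edim R\le1$, then $R$ is an artinian hypersurface. It is therefore Burch by Remark \ref{r24}, and Proposition \ref{t220}(4a) gives $\dx(R)\le1$. So let $\edim R=2$, write $R\cong S/I$ by Cohen's structure theorem with $(S,\n)$ regular of dimension two and $I\subseteq\n^2$, and set $r=\r(R)$. The Hilbert--Burch theorem gives a minimal resolution $0\to S^{r}\xrightarrow{\varphi}S^{r+1}\to S\to R\to0$ with $I=\I_r(\varphi)$. The case $r=1$ is exactly the case where $R$ is a complete intersection, so assume $r\ge2$. If $\I_1(\varphi)\not\subseteq\n^2$, then $R$ is Burch by Proposition \ref{211}(1), and $\dx(R)\le1$ by Proposition \ref{t220}(4a). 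So the remaining case is $r\ge2$ and $\I_1(\varphi)\subseteq\n^2$.

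In that case I would pass to $P=S[\![t]\!]$ and the matrix $\varphi_t=\varphi+tE$, where $E$ is the matrix unit at some fixed entry. Put $T=P/\I_r(\varphi_t)$. The ring $T$ should be a Cohen--Macaulay local ring of dimension one. Indeed, $T/tT\cong R$ is artinian, so $\height\I_r(\varphi_t)\ge2$, and by Hilbert--Burch (Eagon--Northcott) the ideal is perfect of grade two. Hence $t$ is $T$-regular, and $T$ is a deformation of $R$. Since $r\ge2$ and every entry of $\varphi$ lies in $\n^2$, every $r$-minor of $\varphi_t$ lies in $(\n,t)^2$. Consequently $t\notin\m_T^2$.

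The key step is to show that $T$ is Burch. Fix $x\in\n\setminus\n^2$. For a unit $u$, specializing $t\mapsto ux$ gives $T/(t-ux)\cong S/\I_r(\varphi+uxE)$. The first Fitting ideal of this matrix contains an element of the form $ux+(\text{element of }\n^2)\notin\n^2$. By Proposition \ref{211}(1), $S/\I_r(\varphi+uxE)$ is then Burch, provided it is artinian, i.e.\ provided $t-ux$ is $T$-regular. This regularity is where Lemma \ref{2} enters. Expand the relevant element of $T$ as $f(t)=a-bt$ with $a$ regular, and take $U$ to be the infinite set of units from Lemma \ref{2}(2). Lemma \ref{2}(1) then shows that only finitely many $u\in U$ put $f(ux)$ into an associated prime of $T$, so some $u$ makes $t-ux$ a nonzerodivisor. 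I expect arranging the regularity of $t-ux$ precisely in this form to be the main technical obstacle.

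Granting this, $T$ is a Burch ring of depth one, since $\widehat T$ modulo a regular element is $S/(\text{Burch ideal})$. Proposition \ref{t220}(4b) gives $\dx(T)\le2$, since $T$ is complete (if necessary after completing). Because $t\notin\m_T^2$, Lemma \ref{227}(3a) gives
\[
\dx(R)=\dx(T/(t))\le2\dx(T)+1\le5 .
\]

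For (2), since $k$ is infinite we can choose a system of parameters $\xx=x_1,\dots,x_d$ forming a regular sequence with each $x_{i+1}$ outside the square of the maximal ideal of $R/(x_1,\dots,x_i)$. This can be done by taking a minimal reduction and using \cite[Proposition 8.3.3]{HS}, as in the proof of Proposition \ref{p41}. Then $R/(\xx)$ is artinian with $\edim\le2$, and $R$ is a complete intersection if and only if $R/(\xx)$ is. So if $R$ is not a complete intersection of codimension two, part (1) gives $\dx(R/(\xx))\le5$. Lemma \ref{227}(3b) then yields $\dx(R)\le(d+1)(5+1)-1=6d+5$.
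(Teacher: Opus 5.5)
Your proposal is correct and follows the paper's proof essentially step for step: the same one-entry deformation $A(t)$ of the Hilbert--Burch matrix, the same specialization $t\mapsto ux$ with $U$ from Lemma \ref{2}, Burchness of $T$ via Proposition \ref{211}(1), then Proposition \ref{t220}(4b) and Lemma \ref{227}(3a), and the same reduction by a general regular sequence for (2). The differences from the paper are minor streamlinings. You obtain Cohen--Macaulayness of $T$ from $\dim T\le\dim T/tT+1=1$ rather than from the paper's explicit regular sequence $f,g(t)$. The step you flagged as the main obstacle is immediate: apply Lemma \ref{2}(1) in $T$ itself with $a=t$ (already $T$-regular) and $b=1$, so some $t-ux$ is $T$-regular and hence $T/(t-ux)$ is artinian. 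Finally, you explicitly verify $t\notin\m_T^2$, which Lemma \ref{227}(3a) requires and the paper leaves implicit.
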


\begin{proof}
(1) If $\edim R\le1$, then $R$ is an artinian hypersurface, so that $\dx(R)\le 1$ by Proposition \ref{t220}(4a) and Remark \ref{r24}(2).
We may assume that $\edim R=2$, and may also assume that $R$ is not a complete intersection.
There exist a complete regular local ring $(S,\n,k)$ of dimension two with $k$ infinite and an ideal $I$ of $S$ such that $R\cong S/I$.
Then $\height I=\grade I=2$ and $\pd I=1$.
The Hilbert--Burch theorem \cite[Theorem 1.4.17]{BH} implies that the $S$-module $I$ has a minimal free resolution $0\to S^{\oplus r}\xrightarrow{A}S^{\oplus(r+1)}\to I\to0$, where $A=(a_{ij})$ is an $(r+1)\times r$ matrix over $S$ such that $\I_r(A)=I$.
Since $R$ is not a complete intersection, we have $r\ge2$.
If $\I_1(A)$ is not contained in $\n^2$, then the artinian local ring $R$ is Burch by Proposition \ref{211}(1), so that $R$ is uniformly dominant with $\dx(R)\le3$ by \cite[Corollary 5.5(2)]{udim}.
We may assume that $\I_1(A)$ is contained in $\n^2$, and hence $a_{ij}\in\n^2$ for all $i,j$.

For $1\le i\le r+1$ let $\Delta_i$ be the determinant of the $r\times r$ matrix given by removing the $i$th row of the matrix $A$.
As $\I_r(A)=I\ne0$, we have $\Delta_l\ne0$ for some $1\le l\le r+1$.
Applying suitable elementary operations to the matrix $A$ if necessary, we may assume that $l=1$.
Then $f:=\Delta_1\in I$ is $S$-regular.
Choose an element $g\in I$ such that $f,g$ is an $S$-sequence.
Write $g=\sum_{i=1}^{r+1}g_i\Delta_i$.
Let $S[\![t]\!]$
be the formal power series ring.
Put $a_{11}(t)=a_{11}-t$ and $a_{ij}(t)=a_{ij}$ for all $(i,j)\ne(1,1)$.
Let $A(t)=(a_{ij}(t))$ be the $(r+1)\times r$ matrix over $S[\![t]\!]$.
For $1\le i\le r+1$ let $\Delta_i(t)$ be the determinant of the $r\times r$ matrix given by removing the $i$th row of $A(t)$.
Note that $\Delta_1(t)=\Delta_1=f$.
Set $g(t)=\sum_{i=1}^{r+1}g_i\Delta_i(t)$.
Note that for each $2\le i\le r+1$, we have
$$
\Delta_i(t)=\Delta_i-t\,\Gamma_i,\qquad\text{where }\Gamma_i=\left|\begin{smallmatrix}
a_{22}&\cdots&a_{2r}\\
\cdots&\cdots&\cdots\\
a_{i-1,2}&\cdots&a_{i-1,r}\\
a_{i+1,2}&\cdots&a_{i+1,r}\\
\cdots&\cdots&\cdots\\
a_{r+1,2}&\cdots&a_{r+1,r}
\end{smallmatrix}\right|.
$$
Putting $\xi=\sum_{i=2}^{r+1}g_i\Gamma_i\in S$, we have $g(t)=g-t\xi$.
We have $\overline{\Delta_i(t)}=\Delta_i(0)=\Delta_i$ for any $1\le i\le r+1$ and $\overline{g(t)}=g(0)=g$ in $S[\![t]\!]/(t)=S$.
Since $f,g$ is an $S$-sequence, we see that $t,f,g(t)$ is an $S[\![t]\!]$-sequence, so that $f,g(t)$ is an $S[\![t]\!]$-sequence (as the ring $S[\![t]\!]$ is local).
Hence $\grade\I_r(A(t))\ge2$.
By the Hilbert--Burch theorem again, the ideal $\I_r(A(t))$ of $S[\![t]\!]$ has a minimal free resolution $0\to S[\![t]\!]^{\oplus r}\xrightarrow{A(t)}S[\![t]\!]^{\oplus(r+1)}\to\I_r(A(t))\to0$, and $\I_r(A(t))$ is perfect of grade two.
Therefore, the ring $T=S[\![t]\!]/\I_r(A(t))$ is a Cohen--Macaulay local ring of dimension one.
Since $T/(t)=S/\I_r(A)=R$ is an artinian ring, the element $t$ is $T$-regular.

We claim that there exists an element $c\in\n\setminus\n^2$ such that $f,g(c)$ is an $S$-sequence.
Indeed, by Lemma \ref{2}(2), there is an infinite set $U$ of units of $S$ such that for any two distinct elements $u,v\in U$ the element $u-v$ is again a unit of $S$.
Let $x\in\n\setminus\n^2$.
The set $X=\{u\in U\mid g(ux)\in\bigcup_{\p\in\ass_SS/(f)}\p\}$ is finite by Lemma \ref{2}(1).
Since $U$ is an infinite set, we can choose an element $u\in U$ such that $g(ux)\notin\bigcup_{\p\in\ass_SS/(f)}\p$.
The element $g(ux)$ is $S/(f)$-regular, so that $f,g(ux)$ is an $S$-sequence.
Thus the claim follows.

Taking an element $c$ as in the above claim, we get an $S$-sequence $f,g(c)$ in $I':=\I_r(A(c))$.
Applying the Hilbert--Burch theorem to $I'$, we have a minimal free resolution $0 \to S^{\oplus r} \xrightarrow{A(c)} S^{\oplus(r+1)} \to I' \to 0$ of $I'$ over $S$.
As $\dim S=2$, the local ring $R':=S/I'$ is artinian.
Since $a_{11}-c$ is in $\I_1(A(c))\setminus\n^2$, Proposition \ref{211}(1) implies that the artinian local ring $R'$ is Burch.
The isomorphism $R'\cong T/(t-c)$ shows that the complete local ring $T$ is also Burch.
Hence we may apply Proposition \ref{t220}(4b) to obtain the inequality $\dx(T) \le \depth T+1=2$.
Since $T/(t)\cong R$, it follows from Lemma \ref{227}(3a) that $\dx(R)\le 2\dx(T)+1 \le 5$.

(2) By prime avoidance, we choose a system of parameters $\xx=x_1,\dots,x_d$ of $R$ such that $\overline{x_i}\in\overline\m\setminus\overline{\m}^2$ for each $1\le i\le d$, where $\overline\m=\m/(x_1,\dots,x_{i-1})$.
Then $R/(\xx)$ is an artinian local ring with $\edim R/(\xx)=\edim R-d=\codim R\le2$ and with infinite residue field $k$.
It follows from (1) that $R/(\xx)$ is either a complete intersection with $\edim R/(\xx)=2$ or a uniformly dominant ring with $\dx(R/(\xx))\le5$.
In the former case, $R$ is a complete intersection with $\codim R=\edim R/(\xx)=2$.
In the latter case, applying Lemma \ref{227}(3b), we get $\dx(R)\le(d+1)(\dx(R/(\xx))+1)-1\le(d+1)(5+1)-1=6d+5$.
\end{proof}

\begin{rem}
The first assertion of Theorem \ref{t614} recovers and refines \cite[Corollary 6.5]{udim} when the residue field is infinite.
Let $S$ be a regular local ring, $I$ an ideal of $S$ with $\nu(I)\le2$ and $R=S/I$.
Then $\codim R
=\edim R-\dim R
\le\dim S-(\dim S-\height I)
=\height I
\le\nu(I)
\le2$.
Thus, when $R$ is a Cohen--Macaulay local ring with infinite residue field, the second assertion of Theorem \ref{t614} recovers and refines \cite[Corollary 6.11]{udim}.
\end{rem}

\section{Gorenstein local rings with almost minimal multiplicity}\label{27}

The purpose of this section is to pose and consider the following question.

\begin{ques}\label{22}
Let $R$ be a Gorenstein local ring with maximal ideal $\m$.
Suppose that $R$ is not a complete intersection.
If either $\m^3=0$ or $\e(R)\le\codim R+2$, is then $R$ necessarily dominant?
\end{ques}

The second case of Question \ref{22} is viewed as a higher-dimensional version of the first.
Note that Question \ref{22} and \cite[Question 9.6]{dlr} are located in the opposite direction; the latter question asks whether every Gorenstein dominant local ring is a hypersurface.
We start by a lemma that would be well-known to experts.

\begin{lem}\label{24}
\begin{enumerate}[\rm(1)]
\item
Let $R$ be a Gorenstein local ring with maximal ideal $\m$.
Suppose that $\m^3=0$ and that $R$ is not a complete intersection.
Then one has that $\edim R\ge3$ and $\soc R=\m^2\cong k$.
\item
Let $R$ be a Cohen--Macaulay local ring with dimension $d$.
Let $\xx=x_1,\dots,x_d$ be a system of parameters of $R$.
Let $\X$ be a subcategory of $\ds(R/(\xx))$.
Then $\eta(\thick_{\ds(R/(\xx))}\X)$ is contained in $\thick_{\ds(R)}\eta(\X)$, where $\eta:\ds(R/(\xx))\to\ds(R)$ stands for the natural exact functor.
\end{enumerate}
\end{lem}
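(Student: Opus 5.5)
For part (1), the plan is to first exclude small embedding dimension and then identify the socle using the Gorenstein property. If $\m^2=0$, then $\m\subseteq\soc R$, and $\soc R\cong k$ forces $\ell(\m)\le1$. Hence $R$ is a hypersurface, in particular a complete intersection, which is excluded. So $\m^2\ne0$, and $\edim R\ge2$. If $\edim R\le2$, then $\widehat R\cong S/I$ with $S$ regular of dimension at most $2$. Since $R$ is Gorenstein and artinian (because $\m^3=0$), the ideal $I$ has height $\dim S$ and $S/I$ is Gorenstein. By Serre's theorem that Gorenstein ideals of height at most two are complete intersections (or equivalently via the Hilbert--Burch theorem with $r=\r(R)=1$, which gives $\nu(I)=2$), $R$ would be a complete intersection, again excluded. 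Thus $\edim R\ge3$.

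For the socle statement: since $\m^3=0$ we have $\m^2\subseteq\soc R$, and $\m^2\ne0$. Since $R$ is artinian Gorenstein, $\soc R\cong k$ is one-dimensional. Therefore $\soc R=\m^2\cong k$.

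For part (2), I would use the standard description of $\thick_{\ds(R)}\eta(\X)$. Since $\eta$ is an exact functor of triangulated categories, the preimage $\mathcal{C}=\{Y\in\ds(R/(\xx))\mid \eta(Y)\in\thick_{\ds(R)}\eta(\X)\}$ is closed under shifts and cones. It is also closed under direct summands: if $Y\oplus Y'\in\mathcal C$, then $\eta(Y)\oplus\eta(Y')=\eta(Y\oplus Y')$ lies in a thick subcategory, so $\eta(Y)$ does too. Hence $\mathcal C$ is a thick subcategory of $\ds(R/(\xx))$ containing $\X$, so it contains $\thick_{\ds(R/(\xx))}\X$, which is the claim.

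It remains to check that $\eta$ is well defined. As in the proof of Lemma~\ref{227}(2b), restriction of scalars along $R\to R/(\xx)$ is exact on $\db(\mod)$. It sends perfect complexes to perfect complexes because $\xx$ is an $R$-sequence, so $\pd_R R/(\xx)<\infty$ by the Koszul complex. It therefore induces an exact functor on singularity categories.

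The only mildly delicate step is the codimension-two Gorenstein $\Rightarrow$ complete intersection input in part (1); everything else is formal.
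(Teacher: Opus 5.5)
Your proposal is correct and follows the paper's proof. In (1) you rule out $\m^2=0$ and $\edim R\le2$ by the same two facts: a Gorenstein ring with $\m^2=0$ is a hypersurface, and a Gorenstein ring of codimension at most two is a complete intersection, which you justify via Hilbert--Burch. In (2) you show, exactly as the paper does, that $\eta^{-1}(\thick_{\ds(R)}\eta(\X))$ is a thick subcategory containing $\X$.

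The only slip is your aside that $\m^2\ne0$ yields $\edim R\ge2$, which fails for $k[x]/(x^3)$. It is harmless, since the non-complete-intersection hypothesis and your next step handle $\edim R\le2$ anyway.
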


\begin{proof}
(1) If $\edim R\le2$, then $R$ is a Gorenstein local ring with $\codim R\le2$, so that it is a complete intersection.
Hence $\edim R\ge3$.
If $\m^2=0$, then $R$ has minimal multiplicity, so that it is a hypersurface with $\e(R)=2$.
Hence $\m^2\ne0$.
Since $\m^3=0$, we have $0\ne\m^2\subseteq\soc R\cong k$.
Therefore, $\soc R=\m^2\cong k$.

(2) Put $\Y=\thick_{\ds(R)}\eta(\X)$.
Let $\ZZ=\eta^{-1}(\Y)$, that is, the subcategory of $\ds(R/(\xx))$ consisting of objects $Z$ with $\eta(Z)\in\Y$.
Since $\eta(\X)\subseteq\Y$, we have $\X\subseteq\ZZ$.
Let $L\to M\to N\rightsquigarrow$ be an exact triangle in $\ds(R/(\xx))$.
Then we get an exact triangle $\eta(L)\to\eta(M)\to\eta(N)\rightsquigarrow$ in $\ds(R)$.
Let $\{A,B,C\}=\{L,M,N\}$.
If $A,B\in\ZZ$, then $\eta(A),\eta(B)\in\Y$, and $\eta(C)\in\Y$, so that $C\in\ZZ$.
Also, if $D\oplus E\in\ZZ$, then $\eta(D)\oplus\eta(E)=\eta(D\oplus E)\in\Y$, and $\eta(D),\eta(E)\in\Y$, so that $D,E\in\ZZ$.
Therefore, $\ZZ$ is a thick subcategory of $\ds(R/(\xx))$ containing $\X$.
Thus, $\ZZ$ contains $\thick_{\ds(R/(\xx))}\X$, which means that $\eta(\thick_{\ds(R/(\xx))}\X)\subseteq\Y$.
\end{proof}

We need to recall the definitions of a minimal complete resolution and Betti numbers in negative degrees.

\begin{dfn}
Let $(R,\m,k)$ be an artinian Gorenstein local ring.
Let $M$ be an $R$-module.
Then there exists a {\em minimal complete resolution} of $M$, which is defined as an exact sequence
$$
\cdots\xrightarrow{\partial_4}F_3\xrightarrow{\partial_3}F_2\xrightarrow{\partial_2}F_1\xrightarrow{\partial_1}F_0\xrightarrow{\partial_0}F_{-1}\xrightarrow{\partial_{-1}}F_{-2}\xrightarrow{\partial_{-2}}F_{-3}\xrightarrow{\partial_{-3}}F_{-4}\xrightarrow{\partial_{-4}}\cdots
$$
of free $R$-modules such that $\image\partial_i\subseteq\m F_{i-1}$ for all $i\in\Z$ and $\image\partial_0=M$.
For all $i\in\Z$ we define the $i$th {\em Betti number} $\beta_i^R(M)$ of $M$ as the rank of the free $R$-module $F_i$, and the $i$th {\em syzygy} $\syz_R^iM$ of $M$ as the image of the differential map $\partial_i$.
Obviously, for each nonnegative integer $i$, the $i$th Betti number and syzygy of $M$ are the same things as the usual $i$th Betti number and syzygy of $M$ respectively, so there is no confusion.
\end{dfn}

The following proposition supports the first case of Question \ref{22} in the affirmative.
The third assertion says that it is rather rare that no syzygy contains the residue field as a direct summand.

\begin{prop}\label{25}
Let $(R,\m,k)$ be a Gorenstein local ring with $\m^3=0$ which is not a complete intersection.
\begin{enumerate}[\rm(1)]
\item
Let $I$ be a nonzero proper ideal of $R$.
Then the following two statements hold true.
\begin{enumerate}[\rm(a)]
\item
One has $\thick_{\ds(R)}\{R/I,(R/I)^\ast\}=\ds(R)$.
\item
One has $\thick_{\ds(R)}R/I=\ds(R)$ if either $I$ is principal or $R$ has embedding dimension at most $3$.
\end{enumerate}
\item
A thick subcategory $\X$ of $\ds(R)$ coincides with $\ds(R)$ in each of the following three situations.
\begin{enumerate}[\rm(a)]
\item
The subcategory $\X$ is closed under $(-)^\ast$ (i.e., $\X^\ast\subseteq\X$) and contains a nonzero proper ideal of $R$.
\item
The subcategory $\X$ contains a nonzero proper principal ideal of $R$.
\item
The local ring $R$ has embedding dimension at most $3$, and $\X$ contains a nonzero proper ideal of $R$.
\end{enumerate}
\item
Let $M\ne0$ be an $R$-module with no nonzero free summand such that $k$ is not a direct summand of $\syz^iM$ for all $i\in\Z$.
Put $p=\frac{e-\sqrt{e^2-4}}{2}$, $q=\frac{e+\sqrt{e^2-4}}{2}$, $a=\nu_R(M)$ and $b=\r_R(M)$.
Then for all $i\in\Z$ one has
$$
\beta_i^R(M)=\frac{a(q^{i+1}-p^{i+1})-b(q^i-p^i)}{q-p}\ge2\sqrt{\frac{eab-a^2-b^2}{e^2-4}}
\quad\text{and}\quad
p<\frac{a}{b},\frac{b}{a}<q.
$$
\end{enumerate}
\end{prop}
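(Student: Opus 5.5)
We work throughout with the structure supplied by Lemma \ref{24}(1): $e:=\edim R\ge3$ and $\soc R=\m^2\cong k$. In particular $R$ is artinian Gorenstein, so $\ds(R)\simeq\lmod R$, every module has a minimal complete resolution, and $(-)^\ast$ is an exact duality sending $\syz^i$ to $\syz^{-i}$ up to free summands. The basic tool is this: if $N$ has no free summand, then $\m^2N=0$, since $\m^2=\soc R$ annihilates any non-free indecomposable. Hence every syzygy $\syz^iM$ with $i\ne0$ (and $M$ itself if it has no free summand) is a module over $R/\m^2$ that embeds in $\m F$.

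\emph{Part (3).} Take a minimal complete resolution of $M$ and write $\beta_i=\beta_i(M)$. Because $\m^2\syz^{i}M=0$ and $\syz^{i+1}M\subseteq\m F_i$ with $\m^2F_i\subseteq\soc F_i$, we get
$$\ell(\syz^{i+1}M)=\nu(\syz^{i+1}M)+\ell(\m\syz^{i+1}M).$$
We then use that $k$ is not a summand of $\syz^{i+1}M$. For an $R/\m^2$-module $N$ embedded minimally in a free module over a Gorenstein ring with $\m^3=0$, this forces $\soc N=\m N$. Together with $\ell(\syz^{i}M)+\ell(\syz^{i+1}M)=\ell(F_i)$ and Matlis duality, this gives $\r(\syz^iM)=\beta_{i-1}$, and the length identity $\ell(R)=e+2$ yields the linear recurrence
$$\beta_{i+1}=e\beta_i-\beta_{i-1}\quad\text{for all }i\in\Z.$$
The initial values are $\beta_0=a$ and $\beta_{-1}=\mu$-type data, namely $\beta_{-1}=\nu(M^\ast)=\r(M)=b$. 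The recurrence has characteristic roots $p,q$ with $pq=1$ and $p+q=e$. Solving it gives the displayed closed formula, noting that $q^{-i}$ type terms arise from $p^{i}=q^{-i}$.

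Positivity of $\beta_i$ for all $i\in\Z$ is what forces the ratio bounds. As $i\to+\infty$ the coefficient of $q^{i}$ must be positive, which gives $aq-b>0$, i.e.\ $b/a<q$. As $i\to-\infty$ the coefficient of $p^{i}$ must be positive, which gives $b-ap>0$, i.e.\ $a/b>p$. Since $p=1/q$, these two inequalities are equivalent to the stated pair of bounds. For the lower bound on $\beta_i$, write $\beta_i=\alpha q^i+\gamma p^i$ with $\alpha,\gamma>0$. By AM--GM, $\beta_i\ge2\sqrt{\alpha\gamma}$, and $\alpha\gamma=\frac{(aq-b)(b-ap)}{(q-p)^2}=\frac{eab-a^2-b^2}{e^2-4}$ after expanding with $pq=1$ and $p+q=e$.

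The main obstacle is making the recurrence argument rigorous. Concretely, one must show that a module $N$ with $\m^2N=0$, no free summand and no $k$ summand satisfies $\ell(\m N)=\beta$-data of the adjacent syzygy, i.e.\ that $\syz N\supseteq\m^2F$ exactly matches. I would prove this first via the exact sequence $0\to\syz N\to F\to N\to0$ and socle counting.

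\emph{Parts (1) and (2).} For $I\ne0$ proper, $k$-summands appear quickly. Since $\m^2\cong k$ is the minimal nonzero ideal, we have $\m^2\subseteq I$. We compute $\syz(R/I)=I$ and $\syz(R/\m^2)=\m^2\cong k$ up to the filtration by $0\to \m^2\to I\to I/\m^2\to0$, where $I/\m^2$ is a $k$-vector space. In $\ds(R)$ this triangle shows that $k$ lies in $\thick\{R/I\}$ as soon as one of $I$ and $I/\m^2$ produces $k$ independently. If no syzygy of $R/I$ or $(R/I)^\ast\cong I$-dual has $k$ as a summand, then part (3) applies to both. I would combine its inequality $p<a/b<q$ for $M=R/I$ (where $a=1$ and $b=\r(R/I)$) with its analogue for $(R/I)^\ast$ to derive a contradiction, yielding (1a). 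For (1b), when $I=(x)$ we have $a=b=1$ after computing $\r(R/(x))=1$ from $0:x$ duality, and the lower bound of (3) reads $\beta\ge2\sqrt{(e-2)/(e^2-4)}$. I would instead check directly that $\beta_1=1$ contradicts the recurrence. In the case $e=3$, the length $\ell(R)=5$ makes the possible $(a,b)$ finite, and a case check against $p<a/b<q$ works. Part (2) then follows formally: a thick subcategory containing $R/I$ (via $\syz^{-1}I$) contains $\ds(R)$ by (1), with (2a) using closure under $(-)^\ast$ to obtain $(R/I)^\ast$.
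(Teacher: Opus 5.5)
Your part (3) is correct and takes a more self-contained route than the paper. The paper quotes Sj\"odin (or Avramov--Iyengar--\c{S}ega) for $\P_k(t)=1/(1-et+t^2)$ and Lescot for $\P_M(t)=(a-bt)/(1-et+t^2)$, then dualizes to reach $i<0$. You instead obtain the two-sided recurrence $\beta_{i+1}=e\beta_i-\beta_{i-1}$ directly from $\soc(\syz^iM)=\m\syz^iM=\m^2F_{i-1}$ and $\ell(F_i)=(e+2)\beta_i$. There is one slip: $b-ap>0$ means $b/a>p$, i.e.\ $a/b<q$.

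The genuine gap is in (1)--(2). Your plan is to contradict (3) under the assumption that no $\syz^i(R/I)$ or $\syz^i((R/I)^\ast)$, $i\in\Z$, has $k$ as a direct summand. This cannot work, for three reasons.
\begin{enumerate}
\item Since $\syz^i(N^\ast)\cong(\syz^{-i}N)^\ast$ and $k^\ast\cong k$, the assumption on $(R/I)^\ast$ is the same as the one on $R/I$. Applying (3) to $(R/I)^\ast$ (with $a=\r(R/I)$, $b=1$) just repeats $p<\r(R/I)<q$.
\item For $I=(x)$ the recurrence gives $\beta_1=e-\r(R/(x))$. So $\beta_1=\nu((x))=1$ only forces $\r(R/(x))=e-1$, and $p<e-1<q$ holds for every $e\ge3$; no contradiction arises. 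Your claim $\r(R/(x))=1$ is false in general, since $\r(R/(x))=\nu(0:x)$.
\item The assumption is actually satisfiable. Take $R=k[x,y,z]/(xy,xz,yz,x^2-y^2,x^2-z^2)$, which is Gorenstein with $\m^3=0$ and $e=3$, and $I=(x)$. Then $R/(x)\cong k[y,z]/(y,z)^2$ and $(a,b)=(1,2)$. By the Koszulness criterion recalled in the paper's closing remark, no $\syz^i(R/(x))$ with $i\in\Z$ has $k$ as a summand, even though $k\in\thick_{\ds(R)}R/(x)$.
\end{enumerate}
So your finite case check for $e=3$ also fails. Your triangle $\m^2\to I\to I/\m^2$ only puts $I$ in $\thick(k)$, which is the wrong direction. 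Here $k$ has to be built as a cone, not found as a summand.

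The missing idea is to work over $R/I$. Since $\m^2=\soc R\subseteq I$, the maximal ideal of $R/I$ squares to zero.
\begin{itemize}
\item If $R/I$ is not Gorenstein, the $R/I$-syzygy of $(R/I)^\ast\cong\E_{R/I}(k)$ is a nonzero $k$-vector space. This gives an exact sequence $0\to k^{\oplus a}\to(R/I)^{\oplus b}\to(R/I)^\ast\to0$, hence $k\in\thick_{\ds(R)}\{R/I,(R/I)^\ast\}$.
\item If $R/I$ is Gorenstein, then $0:I=(x)$ is principal, with $I\cong(R/(x))^\ast$ and $I^\ast\cong R/(x)$. You rerun the argument with $R/(x)$; if $R/(x)$ were also Gorenstein, it would force $\m/(x)\cong k$ and so $\edim R\le2$.
\end{itemize}
For (1b), in the principal case one uses $(R/(x))^\ast\cong0:x\cong\syz^2(R/(x))$. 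For $\edim R=3$ (so $\ell(R)=5$):
\begin{itemize}
\item If $\m I=0$, then $I=\m^2$, which is principal.
\item If $\m I\ne0\ne\m(0:I)$, the count $\nu(I)+\nu(0:I)=3$ makes $I$ or $0:I$ principal.
\end{itemize}
Together with the duality $(-)^\ast$, these cases reduce to the principal one. Part (2) then follows from (1) as you say.
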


\begin{proof}
(1a) Lemma \ref{24}(1) implies that $\edim R\ge3$ and $\soc R=\m^2\cong k$.
As $I$ is a nonzero ideal of $R$, it contains $\soc R=\m^2$.
Hence $(\m/I)^2=0$ and therefore the artinian local ring $R/I$ has minimal multiplicity.
Note that $(R/I)^\ast$ is isomorphic to the injective hull $\E_{R/I}(k)$ of the $R/I$-module $k$.
The conclusion of the proposition will follow once we show that $k\in\thick_{\ds(R)}\{R/I,(R/I)^\ast\}$.

First, let us consider the case where the factor local ring $R/I$ is not Gorenstein.
In this case, $\E_{R/I}(k)$ is not $R/I$-free.
Since $\E_{R/I}(k)$ is indecomposable as an $R/I$-module, $\syz_{R/I}\E_{R/I}(k)$ is a submodule of a finite direct sum of copies of $\m/I$, which is annihilated by $\m/I$.
Hence $\syz_{R/I}\E_{R/I}(k)$ is a nonzero $k$-vector space.
We thus obtain an exact sequence $0 \to k^{\oplus a}\to(R/I)^{\oplus b}\to(R/I)^\ast\to0$ of $R/I$-modules with $a,b>0$, which shows that $k$ belongs to $\thick_{\ds(R)}\{R/I,(R/I)^\ast\}$.

Next we consider the case where $R/I$ is Gorenstein.
There are isomorphisms $R/I\cong\E_{R/I}(k)\cong(R/I)^\ast\cong0:I$.
Let $x\in0:I$ be the image of $\overline1\in R/I$ by the composition of these isomorphisms.
It follows that $0:I=(x)\ne0$.
As $I\ne0$, the ideal $(x)$ is proper.
We have $I=0:(0:I)=0:x\cong(R/(x))^\ast$ and $I^\ast\cong R/(x)$.

Assume $R/(x)$ is not Gorenstein.
Applying the above argument on $I$ to $(x)$, we find an exact sequence $0\to k^{\oplus a}\to(R/(x))^{\oplus b}\to(R/(x))^\ast\to0$ with $a,b>0$.
Thus $k$ is in $\thick_{\ds(R)}\{I,I^\ast\}=\thick_{\ds(R)}\{R/I,(R/I)^\ast\}$.

Suppose that $R/(x)$ is a Gorenstein local ring.
Then applying the argument on $I$ given at the beginning to $(x)$, we observe that the Gorenstein local ring $R/(x)$ has minimal multiplicity.
Hence $R/(x)$ is a hypersurface local ring of multiplicity two, which implies that it maximal ideal $\m/(x)$ is isomorphic to $k$.
It follows that $\nu_R(\m)\le2$, which contradicts the fact that $\edim R\ge3$.

(1b) We first consider the case where $I$ is principal.
Writing $I=(x)$, we have $(R/I)^\ast\cong0:I=0:x$.
There is an exact sequence $0\to0:x\to R\xrightarrow{x}R\to R/(x)\to0$, which shows that $0:x=R/(x)[-2]\in\thick_{\ds(R)}R/(x)$.
Hence $(R/I)^\ast$ is in $\thick_{\ds(R)}R/I$, and the assertion follows from (1a).

Next we consider the case where $\edim R\le3$.
Lemma \ref{24}(1) implies that $\edim R=3$ and $\soc R=\m^2\cong k$.
We have $\ell(R)=\ell(R/\m)+\ell(\m/\m^2)+\ell(\m^2)=1+3+1=5$.
Note that $0:I$ is also a nonzero proper ideal.

(i) If $\m I=0$, then $I\cong k^{\oplus n}$ for some $n>0$, and applying $\Hom_R(k,-)$ to the exact sequence $0\to I\to R$ shows $n=1$, so that the assertion follows from the previous case.

(ii) We may assume $\m I\ne0$.
As $\m I\subseteq\m^2\cong k$, we get $\m I=\m^2\cong k$.
Thus $\nu(I)=\ell(I)-\ell(\m I)=\ell(I)-1$.

(iii) If $\m(0:I)=0$, then applying argument (i) to the ideal $0:I$ shows that $\thick_{\ds(R)}R/(0:I)$ coincides with $\ds(R)$, and the duality $(-)^\ast:\ds(R)\to\ds(R)$ yields that $\thick_{\ds(R)}(R/(0:I))^\ast$ coincides with $\ds(R)$, so that $\thick_{\ds(R)}R/I$ coincides with $\ds(R)$ as well, since we have $(R/(0:I))^\ast\cong0:(0:I)=I$.

(iv) We may assume that $\m(0:I)\ne0$.
Applying argument (ii) to $0:I$ shows $\nu(0:I)=\ell(0:I)-1$.
As $\ell(0:I)=\ell((R/I)^\ast)=\ell(R/I)$, we have $\nu(I)+\nu(0:I)=(\ell(I)-1)+(\ell(0:I)-1)=\ell(I)+\ell(R/I)-2=\ell(R)-2=3$.
Since $\nu(I)$ and $\nu(0:I)$ are both positive, we see that either of them is equal to $1$.
If $\nu(I)=1$, then $\thick_{\ds(R)}R/I=\ds(R)$ by (1a).
If $\nu(0:I)=1$, then $\thick_{\ds(R)}R/(0:I)=\ds(R)$ by (1a), and similarly as in (iii), applying $(-)^\ast$ gives $\thick_{\ds(R)}R/I=\ds(R)$.

(2) The assertion is an immediate consequence of (1).

(3) Lemma \ref{24}(1) implies that $e\ge3$ and $\soc R=\m^2\cong k$.
It follows from Remark \ref{23}(1) that $\P_k(t)=\frac{1}{t^2-et+1}$.
We see that $\beta_{i+2}(k)-e\beta_{i+1}(k)+\beta_i(k)=0$ for all $i\ge0$.
As $p,q$ are the two roots of the quadratic equation $t^2-et+1=0$, we get $\beta_i(k)=\frac{q^{i+1}-p^{i+1}}{q-p}$ for all $i\ge0$.
Since $k$ is not a direct summand of $M$, we see that $\soc M$ is contained in $\m M$.
Since $M$ has no nonzero free summand, it is a syzygy.
The equalities $\m^2M=0$ and $\soc M=\m M$ follow.
By \cite[Corollary 3.4(1)]{L}, we have $\P_M(t)=\frac{a-bt}{1-et+t^2}$, from which we observe that $\beta_i(M)=a\beta_i(k)-b\beta_{i-1}(k)$ for all $i\ge1$.
It follows that $\beta_i^R(M)=\frac{a(q^{i+1}-p^{i+1})-b(q^i-p^i)}{q-p}$ for all $i\ge0$.
Note that $\nu(M^\ast)=\r(M)=b$ and $\r(M^\ast)=\nu(M)=a$.
Applying the above argument for $M$ to $M^\ast$, we get $\beta_j(M^\ast)=\frac{b(q^{j+1}-p^{j+1})-a(q^j-p^j)}{q-p}$ for all $j\ge0$.
Using the equality $pq=1$,  we obtain
$$
\textstyle\beta_i(M)=\beta_{-i-1}(M^\ast)=\frac{b(q^{-i}-p^{-i})-a(q^{-i-1}-p^{-i-1})}{q-p}=\frac{a(q^{i+1}-p^{i+1})-b(q^i-p^i)}{q-p}
$$
for all $i\le-1$.
Thus we have shown that $\beta_i(M)=\frac{a(q^{i+1}-p^{i+1})-b(q^i-p^i)}{q-p}$ for all integers $i$.

It holds that $(q-p)\beta_i(M)=(aq-b)q^i+(b-ap)p^i$.
Since $e\ge3$, we have $0<p<1<q$.
Therefore, $\lim_{i\to\infty}q^i=\lim_{i\to-\infty}p^i=\infty$ and $\lim_{i\to\infty}p^i=\lim_{i\to-\infty}q^i=0$.
As $\beta_i(M)\ge0$ for every integer $i$, we must have $aq-b>0$ and $b-ap>0$.
Thus $p<\frac{b}{a}<q$.
We get $p=\frac{1}{q}<\frac{a}{b}<\frac{1}{p}=q$.

Note that $(aq-b)q^i,\,(b-ap)p^i$ are positive for all $i\in\Z$.
As $p+q=e$, $pq=1$ and $q-p=\sqrt{e^2-4}$, we get
$$
\begin{array}{l}
\sqrt{e^2-4}\cdot\beta_i(M)=(q-p)\beta_i(M)=(aq-b)q^i+(b-ap)p^i\ge2\sqrt{(aq-b)q^i\cdot(b-ap)p^i}\\
\phantom{\sqrt{e^2-4}\cdot\beta_i(M)=(q-p)\beta_i(M)=(aq-b)q^i+(b-ap)p^i}=2\sqrt{(aq-b)(b-ap)}=2\sqrt{eab-a^2-b^2},
\end{array}
$$
and hence $\beta_i(M)\ge2\sqrt{\frac{eab-a^2-b^2}{e^2-4}}$ for all integers $i$.
\end{proof}

The following theorem is the main result of this section.
This result supports the second case of Question \ref{22} in the affirmative.

\begin{thm}\label{30}
Let $(R,\m,k)$ be a Gorenstein local ring with $k$ infinite, and suppose that $R$ is not a complete intersection.
Let $\X$ be a thick subcategory of $\ds(R)$.
Then $k$ is in $\X$ if either of the following is satisfied.
\begin{enumerate}[\rm(1)]
\item
The inequality $\e(R)\le\codim R+2$ holds, and there exists a nonfree cyclic maximal Cohen--Macaulay $R$-module $X$ such that both $X$ and $X^\ast$ belong to $\X$.
\item
One has $\e(R)\le5$, and $\X$ contains a nonfree cyclic maximal Cohen--Macaulay $R$-module.
\end{enumerate}
\end{thm}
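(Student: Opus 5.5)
The plan is to reduce to the artinian case $\m^3=0$ and there invoke Proposition \ref{25}, passing back to $R$ through Lemma \ref{24}(2).

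Put $d=\dim R$. Since $k$ is infinite, choose a system of parameters $\xx=x_1,\dots,x_d$ generating a minimal reduction of $\m$. Because $X$ is maximal Cohen--Macaulay, we may choose $\xx$ general so that it is also an $X$-sequence and an $X^\ast$-sequence. Set $\overline R=R/(\xx)$. Then $\overline R$ is artinian Gorenstein, not a complete intersection, and satisfies $\codim\overline R=\codim R$ and $\ell(\overline R)=\e(R)$.

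The argument in the proof of Proposition \ref{p41}(3) gives $\ell(\m^2/\xx\m)\le1$ under the hypothesis $\e(R)\le\codim R+2$. Hence the maximal ideal $\overline\m$ of $\overline R$ satisfies $\overline\m^3=0$, and we are in the setting of Proposition \ref{25}.

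In case (2), the bound $\e(R)\le5$ forces the following. If $\e(R)\le\codim R+1$, then $R$ has minimal multiplicity; being Gorenstein, it is a hypersurface, contradicting the hypothesis. So $\e(R)\ge\codim R+2$, and also $\codim R\ge3$ since $R$ is not a complete intersection. Therefore $\codim R=3$ and $\e(R)=5=\codim R+2$, so $\overline R$ has $\overline\m^3=0$ and $\edim\overline R=3$.

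Now write $X=R/J$. Then $\overline X:=X/\xx X=\overline R/\overline J$ with $\overline J=(J+(\xx))/(\xx)$, and $\overline J$ is a nonzero proper ideal of $\overline R$, since $X$ is nonfree and $\overline X$ is nonfree over $\overline R$ by \cite[Lemma 1.3.5]{BH}. Moreover $(X^\ast)/\xx X^\ast\cong\Hom_{\overline R}(\overline X,\overline R)=\overline X^\ast$, because $\xx$ is regular on $R$ and $X$ is maximal Cohen--Macaulay.

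Consider the thick subcategory $\Y=\eta^{-1}(\X)$ of $\ds(\overline R)$, where $\eta\colon\ds(\overline R)\to\ds(R)$ is the natural functor, as in the proof of Lemma \ref{24}(2).

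\textbf{Main obstacle: showing $\overline X$ and $\overline X^\ast$ lie in $\Y$.} The objects $\eta(\overline X)$ and $\eta(\overline X^\ast)$ are the $R$-modules $X/\xx X$ and $X^\ast/\xx X^\ast$. These lie in $\thick_{\ds(R)}\{X\}$ and $\thick_{\ds(R)}\{X^\ast\}$ respectively, via the Koszul triangles, since $X/\xx X\simeq\k(\xx,X)$. Hence $\overline X,\overline X^\ast\in\Y$ as required.

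Proposition \ref{25}(1a), applied to $\overline J$, gives the following in each case:
\begin{itemize}
\item in case (1), $\thick_{\ds(\overline R)}\{\overline X,\overline X^\ast\}=\ds(\overline R)$;
\item in case (2), with $\edim\overline R=3$, Proposition \ref{25}(1b) gives $\thick_{\ds(\overline R)}\overline X=\ds(\overline R)$, so only $X\in\X$ is needed.
\end{itemize}

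In either case, Lemma \ref{24}(2) (equivalently, the inclusion into $\Y$) yields $k=\eta(k)\in\X$. This works because $k$ as an $\overline R$-module maps under $\eta$ to the $R$-module $k$.

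The remaining care point is the simultaneous choice of $\xx$: it must generate a minimal reduction of $\m$ and be regular on $X$ and $X^\ast$. Both conditions are automatic here, since $X$ and $X^\ast$ are maximal Cohen--Macaulay, so every system of parameters is regular on them. So a general minimal reduction suffices, and no further genericity argument is needed.
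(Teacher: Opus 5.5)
Your proposal is correct and follows the paper's proof essentially step for step: you reduce modulo a minimal reduction $\xx$ of $\m$ to an artinian Gorenstein non-complete-intersection whose maximal ideal cubes to zero (and which has embedding dimension $3$ in case (2)), apply Proposition \ref{25}(1) to $X/\xx X$ and $X^\ast/\xx X^\ast\cong\Hom_{R/(\xx)}(X/\xx X,R/(\xx))$, and lift back to $\ds(R)$ via the Koszul complex. Working directly with $\eta^{-1}(\X)$ is exactly the content of the proof of Lemma \ref{24}(2), and pinning down $\codim R=3$, $\e(R)=5$ in case (2) is the same as the paper's bound $\edim R/(\xx)\le3$, so there is no substantive difference.
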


\begin{proof}
(1) As $k$ is infinite, we can choose a system of parameters $\xx=x_1,\dots,x_d$ of $R$ such that $(\xx)$ is a minimal reduction of $\m$.
The argument at the beginning of the proof of Proposition \ref{p41}(3), we see that $(\m/(\xx))^3=0$.
Let $\Y=\thick_{\ds(R/(\xx))}\{X/\xx X,\Hom_{R/(\xx)}(X/\xx X,R/(\xx))\}$.
Note that $X/\xx X$ is a nonfree cyclic $R/(\xx)$-module and that $\Y$ is closed under $\Hom_{R/(\xx)}(-,R/(\xx))$.
Proposition \ref{25}(1) implies that $k$ belongs to $\Y$.
Applying the functor $\eta:\ds(R/(\xx))\to\ds(R)$, we see that $k$ belongs to $\eta(\Y)$, which is contained in $\thick_{\ds(R)}\{X/\xx X,\Hom_{R/(\xx)}(X/\xx X,R/(\xx))\}$ by Lemma \ref{24}(2).
Since $X$ is maximal Cohen--Macaulay, $\xx$ is a regular sequence on $X$.
The Koszul complex on $\xx$ with respect to $X$ is acyclic, which implies that $X/\xx X$ belongs to $\thick_{\ds(R)}X$.
Using \cite[(A.4.21) and (A.4.23)]{C}, we have
$$
\begin{array}{l}
\Hom_{R/(\xx)}(X/\xx X,R/(\xx))
\cong\rhom_{R/(\xx)}(X/\xx X,R/(\xx))
\cong\rhom_{R/(\xx)}(X\lten_RR/(\xx),R/(\xx))\\
\phantom{\Hom_{R/(\xx)}(X/\xx X,R/(\xx))}
\cong\rhom_R(X,R/(\xx))
\cong\rhom_R(X,R)\lten_RR/(\xx)
\cong X^\ast\lten_RR/(\xx)
\cong X^\ast/\xx X^\ast.
\end{array}
$$
Similarly as above, the last term is in $\thick_{\ds(R)}X^\ast$.
Consequently, $k$ belongs to $\thick_{\ds(R)}\{X,X^\ast\}$, and this is contained in $\X$ by assumption.
We conclude that $k$ is in $\X$.

(2) Let $X\in\X$ be a nonfree cyclic maximal Cohen--Macaulay $R$-module.
As $R$ is not a complete intersection but a Gorenstein ring, we have $\codim R\ge3$.
Hence $\e(R)\le5\le\codim R+2$.
Put $e=\edim R$
It follows from (1) and its proof that we find a minimal reduction $(\xx)$ of $\m$, where $\xx=x_1,\dots,x_d$ is a system of parameters of $R$ such that $(\m/(\xx))^3=0$ and $\ell(\m^2/\xx\m)=\e(R)+d-e-1$.
By assumption we have $\e(R)\le5$.
Since $R$ is not a hypersurface, $R$ does not have minimal multiplicity, so that $\ell(\m^2/\xx\m)\ge1$.
It is observed that $e-d\le3$.
By \cite[Proposition 8.3.3(2)]{HS}, we find elements $y_1,\dots,y_{e-d}$ such that $(x_1,\dots,x_d,y_1,\dots,y_{e-d})=\m$. 
Hence $\edim R/(\xx)=\nu(\m/(\xx))\le e-d\le3$.
Note that $X/\xx X$ is a nonfree cyclic $R/(\xx)$-module.
It follows from Proposition \ref{25}(2) that $k$ belongs to $\thick_{\ds(R/(\xx))}X/\xx X$.
Using Lemma \ref{24}(2) and the maximal Cohen--Macaulayness of $X$, we see that $k\in\thick_{\ds(R)}X/\xx X\subseteq\thick_{\ds(R)}X$.
\end{proof}

\begin{rem}
\begin{enumerate}[(1)]
\item
Let $(R,\m,k)$ be a Gorenstein local ring with $\m^3=0$ and $\edim R\ge2$.
Let $M$ be an $R$-module.
Then $k$ is not a direct summand of $\syz^iM$ for all $i\in\Z$ if and only if $k$ is not a direct summand of $M$ and the $R$-modules $M,M^\ast$ are {\em Koszul} in the sense of Herzog and Iyengar \cite{HI}.

Indeed, it follows from \cite[Theorem 4.6]{AIS} that $M$ is Koszul if and only if $(\syz^ik)^\ast$ is not a direct summand of $M$ for all $i>0$, if and only if $\syz^ik$ is not a direct summand of $M^\ast$ for all $i>0$, if and only if $k$ is not a direct summand of $\syz^{-i}(M^\ast)=(\syz^iM)^\ast$ for all $i>0$, if and only if $k^\ast=k$ is not a direct summand of $\syz^iM$ for all $i>0$.
Hence $M^\ast$ is Koszul if and only if $k$ is not a direct summand of $\syz^{-i}M$ for all $i>0$.
\item
In addition to the assumption of Theorem \ref{30}, suppose that $R$ has an isolated singularity.
Then $\X$ coincides with $\ds(R)$ by \cite[Corollary 2.7]{stcm}.
\end{enumerate}
\end{rem}



\end{document}